\numberwithin{equation}{section}
\renewcommand{\email}[2][]{%
  \ifx\emails\@empty\relax\else{\g@addto@macro\emails{,\space}}\fi%
  \@ifnotempty{#1}{\g@addto@macro\emails{\textrm{(#1)}\space}}%
  \g@addto@macro\emails{#2}%
}
\newtheorem{theorem}{Theorem}[section]
\newtheorem{lemma}[theorem]{Lemma}
\newtheorem{proposition}[theorem]{Proposition}
\theoremstyle{definition}
\newtheorem{definition}[theorem]{Definition}}
\theoremstyle{remark}
\newtheorem{remark}[theorem]{Remark}}
\newcommand{\f}{\frac}
\title{GUE corners process in boundary-weighed six-vertex models}
\author[E. Dimitrov]{Evgeni Dimitrov}
\author[M. Rychnovsky]{Mark Rychnovsky}
\email[E. Dimitrov]{edimitro@math.columbia.edu}
\email[M. Rychnovsky]{mrychnov@gmail.com}
\begin{document}

\maketitle

\begin{abstract}
We consider a class of probability distributions on the six-vertex model, which originate from the higher spin vertex models in \cite{borodin2018higher} and have previously been investigated in \cite{dimitrov2016six}. For these random six-vertex models we show that the asymptotic behavior near their base is asymptotically described by the GUE-corners process. 
\end{abstract}

\tableofcontents

%-------------------------------------------------------------------------------------------------------------------------------------------------------------------------------------------------
% Section 1
%
%-------------------------------------------------------------------------------------------------------------------------------------------------------------------------------------------------
\section{Introduction}\label{Section1} In Section \ref{Section1.1} we describe the general structure of the measures we consider in this paper and discuss two different examples that have been previously studied. In Section \ref{Section1.2} we formulate our model precisely, explain why we are interested in it and present the main result we prove about it.

%-------------------------------------------------------------------------------------------------------------------------------------------------------------------------------------------------
% Section 1.1
%
%-------------------------------------------------------------------------------------------------------------------------------------------------------------------------------------------------
\subsection{Preface}\label{Section1.1}

The six-vertex model is a well studied exactly solvable model in statistical mechanics. Linus Pauling introduced the model in 1935 to describe the residual entropy of ice crystals. In addition to its original purpose of describing ice, the six-vertex model has been useful in understanding other physical phenomena such as phase transitions in magnetism \cite{baxter2016exactly, lieb1980two}. 

In the present paper we consider a family of six-vertex models on the half-infinite strip $D_n = \mathbb{Z}_{\geq 0} \times \{1, \dots,n\}$ where $n \in \mathbb{N}$. Specifically, the state space of the models is the set $\mathcal{P}_n$ consisting of all collections of $n$ up-right paths, with nearest neighbor steps in $D_n$ with the paths starting from the points $\{(0,i): 1 \leq i \leq n \}$ and exiting the top boundary. We add the additional condition, that no two paths can share a horizontal or vertical edge, see Figure \ref{S1_1}. 
\begin{figure}[h]
\begin{minipage}{.45\textwidth}
\centering
\includegraphics[scale=0.4]{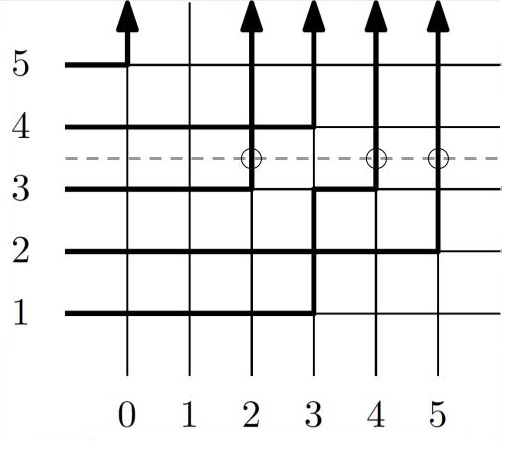}
 \captionsetup{width= \linewidth}
\caption{An example of a path collection $\pi$ in $\mathcal{P}_5$. Here $\lambda^3_1(\pi) = 5$, $\lambda^3_2(\pi) = 4$, $\lambda^3_3(\pi) = 2$}\label{S1_1}
\end{minipage}
\begin{minipage}{.45\textwidth}
\centering
\vspace{2.6mm}
\includegraphics[scale=1.3]{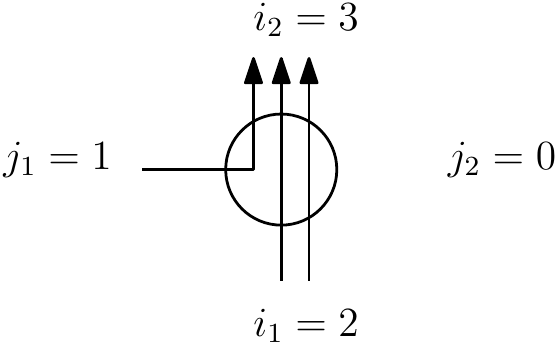}
 \captionsetup{width=0.9\linewidth}
\caption{An example of a vertex  of type $(i_1, j_1; i_2, j_2)=(2, 1; 3, 0)$}
\label{S1_2}
\end{minipage}
\end{figure}

In the next few paragraphs we explain the types of probability measures we put on $\mathcal{P}_n$ (they are given in equation (\ref{GeneralMeasure}) below), but to accomplish this we need a bit of notation. A \emph{signature} of length $n$ is a nonincreasing sequence $\lambda=(\lambda_1 \geq \lambda_2 \geq \dots \geq \lambda_n)$ with $\lambda_i \in \mathbb{Z}$. We use $\mathsf{Sign}_n$ to denote the set of all signatures of length $n$, and use $\mathsf{Sign}_n^{+}$ for the set of such signatures with $\lambda_n \geq 0$. To each collection of $n$ up-right paths $\pi \in \mathcal{P}_n$ one can identify a sequence of signatures $\lambda^i(\pi) \in \mathsf{Sign}^+_i$ for $i = 1,\dots,n$, where $(\lambda^i_1(\pi),\lambda^i_2(\pi),\dots,\lambda^i_i(\pi))$ are the ordered $x$-coordinates at which the paths in $\pi$ intersect the horizontal line $y=i+1/2$, see Figure \ref{S1_1}.  

Given an up-right path collection $\pi \in \mathcal{P}_n$, each vertex is given a \emph{vertex type} based on four numbers $(i_1,j_1;i_2,j_2)$, where $i_1$ and $j_1$ denote the number paths entering the vertex vertically and horizontally respectively, while $i_2$ and $j_2$ denote the number of paths leaving the vertex vertically and horizontally respectively, see Figure \ref{S1_2}. For complex parameters $s$ and $u$ we define the following vertex weights
\vspace{-3mm}
\begin{equation}\label{eq:weightssix}
\begin{split}
&w_1=w(0,0;0,0)=1, \hspace{38.5mm}  w_2=w(1,1;1,1) = \frac{u - s^{-1}}{1-su} \\
&w_2 =w(1,0;1,0) = \frac{1 - s^{-1}u}{1-su}, \hspace{25mm}  w_4=w(0,1;0,1) = \frac{u - s}{1-su} ,\\
& w_5=w(1,0; 0,1) = \frac{(1-s^2)u}{1-su}, \hspace{24.5mm}  w_6=w(0,1;1,0) = \frac{1 - s^{-2}}{1-su}.
\end{split}
\end{equation}
This nonintuitive parametrization of weights by $s$ and $u$ comes from \cite{borodin2018higher}, where it is important in defining a higher spin generalization of the six-vertex model. Later in (\ref{eq:weights}) we present the higher spin vertex weights, and one obtains the weights in (\ref{eq:weightssix}) by setting $q = s^{-2}$ in (\ref{eq:weights}). 

For $\pi \in \mathcal{P}_n$ we let $\pi(i,j)$ denote the vertex type of the vertex at position $(i,j)$ in the path collection $\pi$. Given complex numbers $s$ and $u$, and a function $f: \mathsf{Sign}_n^+ \to \mathbb{C}$ we define the weight of a path collection $\pi \in \mathcal{P}_n$ by
\vspace{-2mm}
$$\mathcal{W}^f(\pi)=f(\lambda^n(\pi)) \prod_{i=1}^{\infty} \prod_{j=1}^n w(\pi(i,j)).$$
All but finitely many $\pi(i,j)$ are equal to $(0,0;0,0)$ and have weight $1$ by (\ref{eq:weightssix}), so the product is well defined. If one chooses $u$ and $s$ in $\mathbb{C}$ and the function $f$ so that the weights $\mathcal{W}^f(\pi)$ are nonnegative, not all zero and summable then one can use the weights $W^f(\pi)$ to define a probability measure on $\mathcal{P}_n$ through
\begin{equation}\label{GeneralMeasure}
\mathbb{P}^f(\pi) = (Z^f)^{-1} \cdot \mathcal{W}^f(\pi), \mbox{ where }Z^f:=\sum_{\pi \in \mathcal{P}_n} \mathcal{W}^f(\pi).
\end{equation}
Equation (\ref{GeneralMeasure}) gives the general form of the measures we study in our paper. In plain words $\mathbb{P}^f$ is the usual six-vertex measure except that the path collections $\pi$ are reweighed based on their top boundary, namely $\lambda^n(\pi)$, through the boundary weight function $f$.
\begin{remark} When we go to our main results we will take $u > s > 1$ above. In the usual weight parametrization of the six-vertex model we have that 
\begin{equation*}
\begin{split}
&a_1 =1, \hspace{3.5mm}  a_2 =  \frac{u - s^{-1}}{su - 1}, \hspace{3.5mm} b_1 =  \frac{1 - s^{-1}u}{1-su}, \hspace{3.5mm}b_2 =  \frac{u - s}{su - 1}, \hspace{3.5mm} c_1 = \frac{(1-s^2)u}{1-su}, \mbox{ and }c_2 = \frac{1 - s^{-2}}{su-1}.
\end{split}
\end{equation*}
We mention that the latter weights are the absolute values of those in (\ref{eq:weightssix}), where ultimately the sign difference will be absorbed in the boundary weight function $f$ of the model so that $\mathcal{W}^f(\pi) \geq 0$ for all $\pi \in \mathcal{P}_n$. Associated with the six weights is an {\em anisotropy parameter} $\Delta$, given by
\begin{equation}\label{DefDelta}
\Delta(a_1, a_2, b_1, b_2, c_1, c_2) = \frac{a_1 a_2 + b_1b_2 - c_1 c_2}{2 \sqrt{a_1 a_2 b_1 b_2}},
\end{equation}
which is believed to be directly related with the qualitative and quantitative properties of the model, see \cite{Res10}. The choice of weights as in (\ref{eq:weightssix}) with $u > s > 1$ corresponds to $\Delta > 1$, which is known as the {\em ferroelectric phase} of the six-vertex model. 
\end{remark}

There are many different choices of parameters and functions $f$ that lead to meaningful measures in (\ref{GeneralMeasure}). For example, if $f(\lambda) = 0$ unless $\lambda_{n-i+1} = i-1$ for $i = 1, \dots, n$ the measure in $\mathbb{P}^f$ becomes the six-vertex model with {\em domain wall boundary condition} (DWBC), \cite{Kor82}. Another special case of the measures in (\ref{GeneralMeasure}) includes the case when $u > s > 1$ and 
\begin{equation}\label{EqStochBound}
f(\lambda) =\mathsf{G}_{\lambda}^c(\rho):= (-1)^n \cdot \mathbbm{1}_{m_0=0} \prod_{i=1}^{\infty}\mathbbm{1}_{m_i \leq 1} \prod_{j=1}^n (-s)^{\lambda_j},
\end{equation}
where $\lambda=0^{m_0} 1^{m_1} 2^{m_2}\dots $. In the latter notation $m_i$ is the number of times $i$ appears in the list $(\lambda_1, \dots, \lambda_n)$ and $\mathbbm{1}_E$ is the indicator function of the set $E$. With this choice of parameters and function $f$, the measure $\mathbb{P}^f$ becomes what is known as stochastic six-vertex model, see e.g. \cite{GwaSpohn92}, \cite{borodin2016stochastic}, with parameters
$$b_1 = \frac{1 - s^{-1} u}{1 - su}, \hspace{5mm} b_2 = \frac{s^2 - s u}{1 - su}.$$
For a quick proof of the latter statement we refer the reader to \cite[Section 6.5]{borodin2018higher}.

Different choices of the boundary weight function $f$ lead to qualitatively different behavior of the measures $\mathbb{P}^f$ in (\ref{GeneralMeasure}). We illustrate this point by comparing the DWBC and the stochastic six-vertex model we just introduced. In order to begin understanding the qualitative differences between these two models we need to discuss the {\em pure states} (or the ergodic, translation-invariant Gibbs measures) of the six-vertex model. For this we follow \cite{Amol2020}, see also \cite[Section 1.2.1]{CGST}.

A prediction in \cite{Bukman}, which has been very recently partially verified in \cite{Amol2020}, states that the pure states $\mu$ of the ferroelectric six-vertex model are parametrized by a {\em slope} $(s,t) \in [0,1]^2$, where $s$ and $t$ denote the probabilities that a given vertical and horizontal edge is occupied under $\mu$. For a certain open lens-shaped set $\mathfrak{H} \subset [0,1]^2$ one has the following characterization of pure states for the ferroelectric six-vertex model (here $\overline{\mathfrak{H}} = \mathfrak{H} \cup \partial \mathfrak{H}$):
\begin{enumerate}
\item {\em Nonexistence}: If $(s,t) \in \mathfrak{H}$, then there are no pure states $\mu_{s,t}$ of slope $(s,t)$.
\item {\em KPZ States}: If $(s,t) \in \partial \mathfrak{H}$, then $\mu_{s,t}$ should exhibit Kardar-Parisi-Zhang (KPZ) behavior.
\item {\em Liquid States}: If $(s,t) \in (0,1)^2 \setminus \overline{\mathfrak{H}}$, then $\mu_{s,t}$ should exhibit Gaussian free field (GFF) behavior. 
\item {\em Frozen States}: If $(s,t)$ is on the boundary of $[0,1]^2$, then $\mu_{s,t}$ should be frozen.  
\end{enumerate} 
From the above conjectural classification, \cite{Amol2020} established the nonexistence statement (1) and proved the existence and uniqueness of KPZ states (2) for all $(s,t) \in \partial \mathfrak{H}$. It is worth mentioning that the above classification sharply contrasts the one for dimer models. Specifically, the pure states in dimer models were classified in \cite{Sheffield1} and \cite{Sheffield2} and they come in three types. The first is {\em frozen}, where the associated height function is basically deterministic; the second is {\em gaseous}, where the variance of the height function is bounded but non-zero; the third is {\em liquid}, where the hight function fluctuations diverge logarithmically in the lattice size. In particular, for dimer models there are no Nonexistence or KPZ pure states. \\

Going back to our previous discussion, the stochastic six-vertex model considered in \cite{borodin2016stochastic}, which corresponds to $f$ as in (\ref{EqStochBound}), was shown to asymptotically have a phase diagram that consists of two frozen regions, i.e. regions where the local behavior of the model is described by Frozen States, and a non-frozen region, where one observes solely KPZ States, see Figure \ref{S1_4}. More specifically, in \cite{borodin2016stochastic} it was shown that the one-point marginals of the height function $h(x,y)$, which at a location $(x,y)$ counts the number of horizontal edges crossed by the vertical segment connecting $(x,0)$ and $(x,y)$ in the non-frozen region $III$ of Figure \ref{S1_4} are asymptotically governed by the GUE Tracy-Widom distribution \cite{TWPaper}. This type of behavior is characteristic of models in the KPZ universality class (for more background on this class we refer to the excellent survey \cite{CorwinKPZ}). For the DWBC six-vertex model a very different phase diagram is expected, although we emphasize that it has not been established rigorously. Specifically, for the DWBC it is expected that as $n$ becomes large the model again develops macroscopic frozen regions that are separated by a non-frozen region where one observes solely Liquid States. The only instance where this has been rigorously established is when $\Delta = 0$, which is the free fermion point of the model, see \cite{Kenyon1}, \cite{Kenyon2}. When $\Delta = 0$ the six-vertex model falls into the framework of the dimer models, which is what enables its precise mathematical analysis. We mention; however, that there are non-rigorous physics works and numerical simulations that indicate that for all DWBC six-vertex models one observes solely Liquid States in the non-frozen region, and by analogy with the dimer models the fluctuations of those are no longer KPZ, but rather governed by a suitable pullback of the Gaussian free field, \cite{Granet}. 
\begin{figure}[h]
\includegraphics[scale=0.4]{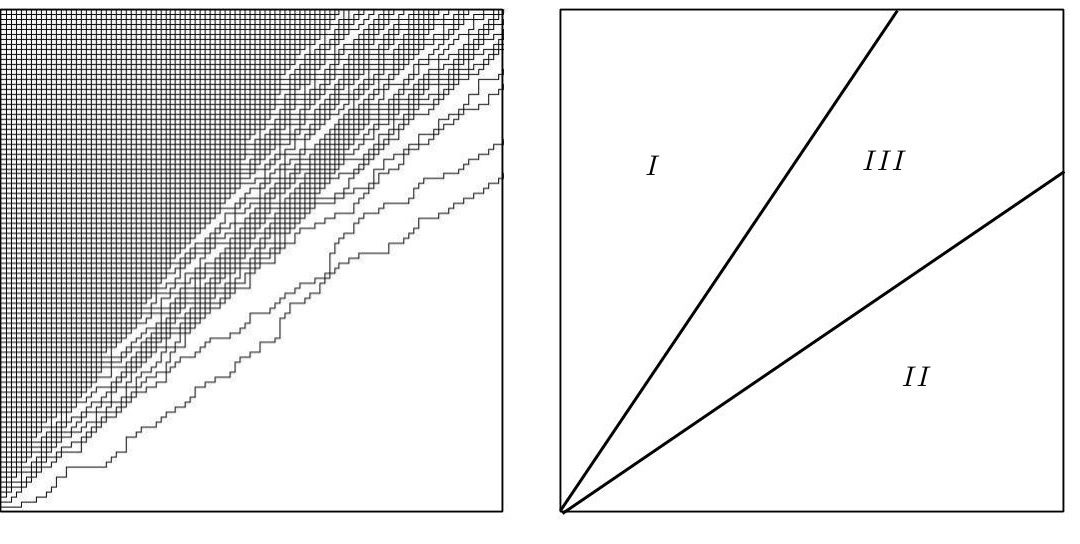}
 \captionsetup{width=0.9\linewidth}
\caption{The left picture represent a sample of $\mathbb{P}^f$ with $f$ as in (\ref{EqStochBound}) for the parameters $n = 100$, $u = 2$, $s^{-2} = 0.5$. The picture on the right side depicts the phase diagrams for these measures when $n$ is large. The regions $I$ and $II$ correspond to Frozen States and region $III$ corresponds to KPZ States
}
\label{S1_4}
\end{figure}

The above paragraphs explain that by picking different boundary weight functions $f$ we can obtain qualitatively different phase diagrams for our six-vertex model. In the present paper we consider a very special class of boundary functions $f$. This class will be described in the next section, where the definition of the models, some of their structural properties and main result we prove for them are presented. In the remainder of this section we explain the very high level motivations that have guided our choice of $f$. 

First of all, our discussion above indicates that for the stochastic six-vertex model of \cite{borodin2016stochastic} the non-frozen region consists entirely of KPZ States, while for the DWBC (at least conjecturally) it consists solely of Liquid States (or states with Gaussian statistics). A natural question is whether we can find a boundary weight function $f$ for which both types of pure states co-exist in the non-frozen region of the model. A second point is that, for general functions $f$, the asymptotic analysis for $\mathbb{P}^f$ is prohibitively complicated -- indeed even for the DWBC the phase diagram is largely conjectural, and so one is inclined to consider special boundary weight functions $f$ for which the analysis of the model is tractable. Our choice of $f$ is motivated by our desire that the resulting model satisfies these two properties.
%-------------------------------------------------------------------------------------------------------------------------------------------------------------------------------------------------
% Section 1.2
%
%-------------------------------------------------------------------------------------------------------------------------------------------------------------------------------------------------
\subsection{Model and results}\label{Section1.2}
In the present paper we study a special case of (\ref{GeneralMeasure}) when the boundary weight function $f$ is given by
\begin{equation}\label{EqOurBound}
f(\lambda) = \sum_{\mu \in \mathsf{Sign}_n^+} \mathsf{G}_{\mu}^c(\rho) \mathsf{G}^c_{\lambda/\mu}(v,\dots,v).
\end{equation}
In (\ref{EqOurBound}) the function $\mathsf{G}_{\mu}^c(\rho)$ is as in (\ref{EqStochBound}) and the functions $\mathsf{G}^c_{\lambda/\mu}$ are a remarkable class of symmetric rational functions, which were introduced in \cite{Bor14}. In the present paper one can find the definition of $\mathsf{G}^c_{\lambda/\mu}$ in Definition \ref{def:G}, and these functions depend on $M$ complex variables $v_1, \dots, v_M$ that have all been set to the same complex number $v$ in (\ref{EqOurBound}). We mention that $\mathsf{G}^c_{\lambda/\mu}$ are one-parameter generalizations of the classical (skew) Hall-Littlewood symmetric functions \cite{Mac} and carry the name of (skew) {\em spin Hall-Littlewood symmetric functions}, see \cite{BufPet}. 

One can check that if $v^{-1} > u > s > 1$ then the measure $\mathbb{P}^f$ from (\ref{GeneralMeasure}) with $f$ as in (\ref{EqOurBound}) is a well-defined probability measure, see Section \ref{Section2.2}. We will denote this measure by $\mathbb{P}^{N,M}_{u, v}$. \\

Even though the choice of $f$ in (\ref{EqOurBound}) seems complicated we emphasize that the resulting measure $\mathbb{P}^f$ enjoys many remarkable properties and its asymptotic structure appears to be rich and interesting. We elaborate on these points in the next few paragraphs, summarizing some results from \cite{dimitrov2016six} where this model was studied in detail.

First of all, the choice of $f$ as in (\ref{EqOurBound}) makes the model integrable and the distribution $\mathbb{P}^f$ analogous to the {\em ascending Macdonald processes} of \cite{borodin2014macdonald}. What plays the role of the (skew) Macdonald symmetric functions $P_{\lambda / \mu}$ and their duals $Q_{\lambda / \mu}$ is a class of symmetric rational functions $\mathsf{F}_{\lambda/ \mu}$ and their duals $\mathsf{G}^c_{\lambda/\mu}$ that were mentioned above. The functions $\mathsf{F}_{\lambda/\mu}, \mathsf{G}^c_{\lambda/\mu}$ enjoy many of the same properties as the Macdonald symmetric functions, including branching rules, orthogonality relations, (skew) Cauchy identities and so on. One consequence of the integrability of the model that can be appreciated by readers unfamiliar with symmetric function theory is that the partition function $Z^f$ for our choice of $f$ in (\ref{EqOurBound}) takes the following extremely simple product form
$$Z^f=(s^{-2};s^{-2})_n \left( \frac{ 1- s^{-1} u}{1-s u} \right)^n \left( \frac{1-s^{-2} uv}{1-uv} \right)^{nM}, \mbox{ where } (a;q)_m = (1-a)(1-aq) \cdots (1-aq^{m-1}).$$
The latter formula for the partition function is recalled in Section \ref{Section2.2} in the paper.

Another consequence of the integrability of the model is the fact that it is self-consistent in the following sense. Suppose that we sample a path collection $\pi$ on $\mathcal{P}_n$ according to $\mathbb{P}^{n,M}_{u, v}$ and then project the path collection to the first $k$ rows where $1 \leq k \leq n$. The resulting path collection is now a random element in $\mathcal{P}_k$ and its distribution is precisely $\mathbb{P}^{k,M}_{u, v}$ -- we recall this in Lemma \ref{Projections}. This self-consistency of the measures $\mathbb{P}^{n,M}_{u, v}$ for $n \in \mathbb{N}$ allows us for example to define a measure on up-right paths on the whole of $\mathbb{Z}_{\geq 0}^2$ whose projection to the bottom $n$ rows has law $\mathbb{P}^{n,M}_{u, v}$. 

\begin{figure} [h]
\includegraphics[scale=0.35]{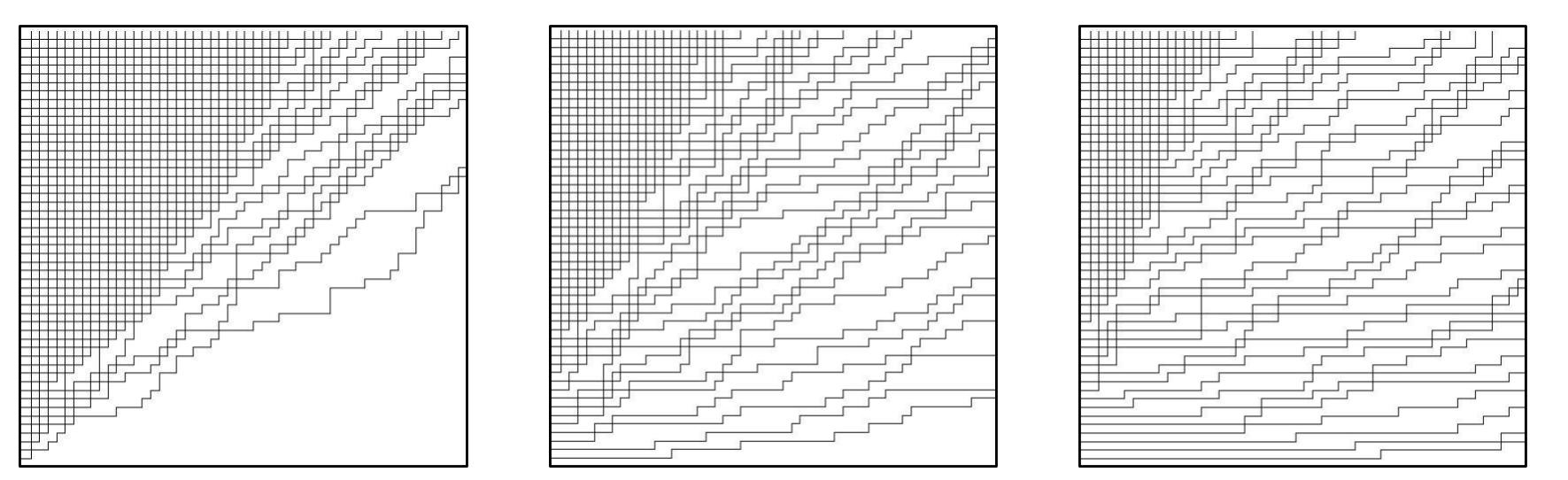}
\vspace{-6mm}
\caption{The pictures represent samples of the Markov chain $\{X_m\}_{m = 0}^\infty$ when $n = 50$ at times $m = 0$, $m = 50$ and $m = 100$. The parameters of the process are $s^{-2} = 0.5$, $u =2$ and all $v = 0.25$}
\label{S1_3}
\end{figure}
Yet another consequence of the integrability of the model is given by the fact that for fixed $n$ and varying $m \in \mathbb{Z}_{\geq 0}$ the measures $\mathbb{P}^{n,M}_{u, v}$ can be stochastically linked as we next explain. One can interpret the distribution $\mathbb{P}^{n,m}_{u, v}$ as the time $m$ distribution of a Markov chain $\{X_m\}_{m = 0}^\infty$ taking values in $\mathcal{P}_n$ for each $m$. This Markov chain is started from the stochastic six-vertex model at time zero, and its dynamics are governed by sequential update rules. For more details and a precise formulation we refer the reader to \cite[Section 6]{borodin2018higher} as well as \cite[Section 8]{EDArxiv} where an exact sampling algorithm of this process was developed by one of the authors. For a pictorial description of how the configurations $X_m$ evolve as time increases see  Figure \ref{S1_3}. This interpretation is similar to known interpretations of the Schur process and Macdonald process as fixed time distributions of certain Markov processes, see \cite{borodin2010schur, borodin2014macdonald}. \\

The above few paragraphs explained some of the structure and relationships between the measures $\mathbb{P}^{N,M}_{u,v}$ for varying $N, M \in \mathbb{N}$. These measures arise as degenerations of the higher-spin vertex models that were studied in \cite{borodin2018higher}, which is the origin of their integrability.  For the purposes of the present paper, the main consequence of the integrability of the model that is utilized is that one has suitable for asymptotic analysis formulas for the one-dimensional projections of $\mathbb{P}^{N,M}_{u,v}$. This is what makes the analysis of the model tractable, which as we recall from the end of Section \ref{Section1.1} is one of our desired properties. 

Our primary probabilistic interest in the measures $\mathbb{P}^{N,M}_{u,v}$ comes from the fact that as $N,M \rightarrow \infty$ the phase diagram of the model (at least conjecturally) exhibits all three types of pure states -- Frozen, Liquid and KPZ. The presence of all three types of pure states is the second high-level motivation behind our choice of $f$ as in (\ref{EqOurBound}) and we illustrate the phase diagram in Figure \ref{SS_1}.
\begin{figure} [h]
\includegraphics[scale=0.5]{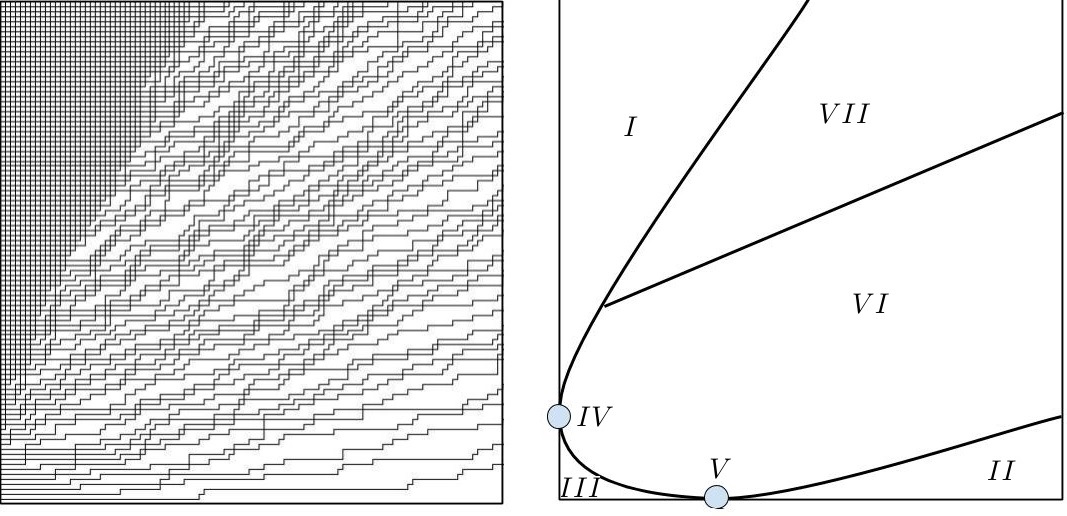}
\caption{The picture on the left represent a sample of $\mathbb{P}_{u,v}^{N,M}$ with $N = M = 100$, $u = 2$, $s^{-2} = 0.5$, $v = 0.25$. The picture on the right represents the (conjectural) phase diagram of the model as $N, M \rightarrow \infty$}
\label{SS_1}
\end{figure}
The phase diagram in Figure \ref{SS_1}, which corresponds to $\mathbb{P}_{u,v}^{N,M}$ when $N$ and $M$ are large, should be compared to the one in Figure \ref{S1_4}, which corresponds to the stochastic six-vertex model or equivalently to the measure $\mathbb{P}_{u,v}^{N,0}$ (recall that the measures $\mathbb{P}_{u,v}^{N,m}$ were stochastically linked through a Markov chain with time zero distribution given precisely by the stochastic six-vertex model). At least based on the simulations one observes that as the vertex model evolves in time $m = 0, \dots, M$ the frozen regions $I$ and $II$ from the stochastic six-vertex model in Figure \ref{S1_4} begin to deform and a new frozen region, denoted by $III$ in Figure \ref{SS_1} and consisting of vertices of type $(0,1;0,1)$, is formed near the origin. With this new frozen region two new points $IV$ and $V$ are formed. These are sometimes referred to as {\em turning points} and they arise where two different frozen regions meet each other. Furthermore, our prediction is that, under the Markovian dynamics evolving the six-vertex model, the KPZ cone (i.e. region $III$ in Figure \ref{S1_4}) that is present at time $m =0$ is translated away from the origin to region $VII$ and a new GFF region (denoted by $VI$ in Figure \ref{SS_1}) takes its place. We mention here that the exact nature of the Markovian dynamics is not important for this paper. The reason we mention it is to emphasize that the stochastic six-vertex model and the measures $\mathbb{P}_{u,v}^{N,M}$ we consider here are related to each other and the presence of the KPZ region $VII$ in $\mathbb{P}_{u,v}^{N,M}$ can be traced back to the presence of region $III$ in $\mathbb{P}_{u,v}^{N,0}$. If the same dynamics are run from a different initial configuration one may very well see a completely different phase diagram than the one in Figure \ref{SS_1}.  

As can be seen from Figure \ref{SS_1} the asymptotics of  $\mathbb{P}^{N,M}_{u,v}$ as $N,M \to \infty$ appear to be quite complex. A long term program, initiated in \cite{dimitrov2016six}, is to rigorously establish the phase diagram in Figure \ref{SS_1}. So far only the asymptotics near the point $IV$ have  been understood. Specifically, in \cite{dimitrov2016six} one of the authors showed that near $IV$ a certain configuration of empty edges converges to the GUE-corners process, we define the latter here. Recall that the Gaussian Unitary Ensemble (GUE) is a measure on Hermitian matrices $\{X_{ij}\}_{i,j=1}^k$ with density proportional to $e^{-Tr(X^2)/2}$ with respect to Lebesgue measure. For $1 \leq r \leq k$, let $\lambda_1^{r} \leq \lambda_2^{r} \leq \cdots \leq \lambda^{r}_{r}$ denote the ordered eigenvalues of the submatrix $\{X_{ij}\}_{i,j=1}^{r}$ of $X$. The joint law of the eigenvalues $\{\lambda^j_i\}_{1 \leq i \leq j \leq k}$ is called the \emph{GUE-corners process} of rank $k$ (or the GUE-minors process). The appearance of the GUE-corners process has been established in related contexts for random lozenge tilings in \cite{johansson2006eigenvalues, nordenstam2009interlaced, okounkov2006birth} and the uniform six-vertex model with domain-wall boundary conditions \cite{Gor14}. It is believed to be a universal scaling limit near points separating two different frozen regions such as the point $IV$.

The present paper, is a continuation of the program  initiated in \cite{dimitrov2016six} of establishing the phase diagram in Figure \ref{SS_1}. Specifically, in Figure \ref{SS_1} the point $V$ is another turning point, and in this paper, we show that the statistics of the model $\mathbb{P}^{N,M}_{u,v}$ near this point are also described by the GUE-corners process. Before we state our main result we give our choice of parameters and some notation.
\begin{definition}\label{DefConstants} We assume that $v, u,s \in (0, \infty)$ satisfy $v^{-1} > u > s > 1$. With this choice of parameters we define the constants
\begin{equation}\label{S1constants}
\begin{split}
&a=\frac{ v \left(u-s^{-1}\right) \left(s^{-1} u-1\right)}{(1-u v) (1- s^{-2} u v)}, \qquad b=\frac{ (s^2-1)}{(u-s)(1-su)} \\
& c=\frac{1}{2} \left(a \left(\frac{1}{(u-s)^2}-\frac{s^2}{(1-s u)^2}\right)-\frac{s^{-4} v^2}{(1-s^{-2} u v)^2}+\frac{v^2}{(1-u v)^2} \right), \qquad d=\frac{- \sqrt{2 c}}{b}.
\end{split}
\end{equation}
If $v^{-1} > u > s > 1$ one observes that
$$a>0, \qquad b<0, \qquad c>0, \qquad d>0.$$
See Lemma \ref{LemmaPar} in the main text for a verification of this fact.
\end{definition}
The main result of the paper is as follows.
\begin{theorem} \label{thm:main} Suppose that $u,v,s,a,d$ are as in in Definition \ref{S1constants} and $k \in \mathbb{N}$ is given. Suppose that $N(M)$ is a sequence of integers such that $N(M) \geq k$ for all $M$ and let $\mathbb{P}^{N,M}_{u,v}$ be the measure on collections of paths $\pi \in \mathcal{P}_N$ as earlier in the section. Define the random vector $Y(N,M;k)$ through
\begin{equation}\label{DefY}
Y_i^j(N,M;k) = \frac{\lambda^j_{j-i+1}(\pi) - aM}{d\sqrt{M}} \mbox{ for $1 \leq i \leq j \leq k$.}
\end{equation}
Then the sequence $Y(N,M;k)$ converges weakly to the GUE-corners process of rank $k$ as $M \to \infty$. 
\end{theorem}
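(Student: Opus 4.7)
The plan is to reduce to a joint analysis at fixed $N=k$, derive nested contour integral formulas for joint observables of the signatures $(\lambda^1,\ldots,\lambda^k)$, and carry out a multi-contour steepest descent that matches the limit to that of the GUE-corners process.

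First, by the self-consistency Lemma \ref{Projections}, the joint law of $(\lambda^1(\pi),\ldots,\lambda^k(\pi))$ under $\mathbb{P}^{N,M}_{u,v}$ equals its law under $\mathbb{P}^{k,M}_{u,v}$ --- projecting any sample of $\mathbb{P}^{N,M}_{u,v}$ to its bottom $k$ rows gives a sample of $\mathbb{P}^{k,M}_{u,v}$. So I would reduce immediately to the case $N=k$. To establish weak convergence of $Y(N,M;k)$ to the GUE-corners process of rank $k$, I would establish convergence of a characterizing family of joint observables together with a tightness estimate. A convenient choice is the collection of multiplicative $q$-observables $\prod_{l=1}^m(1-z_l q^{-\lambda^{j_l}_{\alpha_l}})^{-1}$ with $q=s^{-2}$, $j_l\in\{1,\dots,k\}$, $\alpha_l\in\{1,\dots,j_l\}$, and $z_l$ on small contours around the origin: such observables admit $m$-fold nested contour integral representations coming from the spin Hall--Littlewood / higher-spin vertex model framework of \cite{borodin2018higher, Bor14}, and after suitable rescaling they determine joint distributions on $\mathbb{R}^{k(k+1)/2}$.

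Each $m$-fold integral contains an $M$-dependent factor of the form $\phi(w)^M$ for a rational function $\phi$ built from $s,u,v$. The real critical point $w_*$ of $\log\phi$ is determined by $(\log\phi)'(w_*)=0$, and the Taylor expansion of $\log\phi$ and of the observable at $w_*$ produces exactly the constants of Definition \ref{DefConstants}: $aM$ is the leading drift of $\lambda^j_{j-i+1}$, $c$ is the second-order coefficient governing Gaussian fluctuations at scale $\sqrt{M}$, and $b$ is a prefactor derivative, with the Gaussian scale $d=-\sqrt{2c}/b$. Deforming each contour to a steepest descent path through $w_*$, rescaling $w=w_*+\xi/\sqrt{M}$, and changing variables to $Y_i^j=(\lambda^j_{j-i+1}-aM)/(d\sqrt{M})$, the integrand localizes to an $M^{-1/2}$-neighborhood and the leading term becomes an $m$-fold Gaussian integral in the $Y$-variables; this limit matches the corresponding joint multiplicative observable of the GUE-corners process of rank $k$, as computed for instance in \cite{johansson2006eigenvalues, Gor14}. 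Tightness of $Y(N,M;k)$ then follows from the uniform Gaussian decay of these transforms in the spectral parameters, yielding full weak convergence.

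The main technical obstacle is the simultaneous multi-contour steepest descent. The single-level analysis at turning point $IV$ in \cite{dimitrov2016six} deforms a single contour to pass through $w_*$; for the joint $\binom{k+1}{2}$-dimensional convergence at turning point $V$ one must deform $m$ nested contours simultaneously through the common real critical point $w_*$, control residues picked up when contours cross, and obtain uniform decay estimates of the integrand outside a shrinking neighborhood of $w_*$. A secondary, more mechanical, step is the derivation of the nested integral formulas themselves for joint observables across multiple levels; I would build these from skew Cauchy identities for the spin Hall--Littlewood functions $\mathsf{F}_\lambda$ and $\mathsf{G}^c_{\lambda/\mu}$, using difference-operator techniques analogous to the Macdonald difference operators. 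The positivity and ordering of $a, b, c, d$ stated in Definition \ref{DefConstants} is a routine calculation, carried out in Lemma \ref{LemmaPar}.
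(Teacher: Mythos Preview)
Your reduction to $N=k$ via the self-consistency of Lemma~\ref{Projections} is correct and matches the paper. From that point on, however, your proposal diverges sharply from the paper's argument, and there is a real concern that your route is obstructed rather than merely incomplete.

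The paper does \emph{not} attack the joint law via $q$-moment observables or multi-level nested contour integrals. It proceeds in two stages. First, it proves convergence of only the \emph{top row} $(Y_1^k,\ldots,Y_k^k)$ to the Hermite ensemble $\mu_{\GUE}^k$ (Proposition~\ref{PropHerm}). This is done by writing the density $\mathbb{P}(\lambda^k(\pi)=\mu)$ as a product $A_M(\mu)\cdot B_M(\mu)$ from Lemma~\ref{Projections}, where $A_M$ carries $\mathsf{F}_\mu([u]^k)$ and is controlled by the combinatorial estimate of Lemma~\ref{FBound}, and $B_M$ carries $f(\mu;[v]^M,\rho)$, for which Lemma~\ref{CIFf} supplies a single $k$-fold contour integral that is analyzed by steepest descent in Lemma~\ref{BMAs}. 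Second, the full array is recovered from the top row: interlacing $\lambda^{i}\preceq\lambda^{i+1}$ gives tightness of $Y(N,M;k)$ for free, and the six-vertex Gibbs property passes in the limit to the continuous Gibbs property via \cite[Proposition~6.7]{dimitrov2016six}; any subsequential limit then has top row $\mu_{\GUE}^k$ and satisfies the continuous Gibbs property, which pins it down as the GUE-corners process.

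Your proposed route---difference-operator observables and nested contour integrals across levels---is precisely the method the paper sets aside. At the end of Section~\ref{Section1.2} the authors note that the difference-operator observables of \cite{dimitrov2016six} ``become useless for accessing the asymptotic behavior of the base of the model,'' i.e.\ at the very turning point $V$ under study. You have not actually derived the multi-level nested contour formulas you invoke; you only say you ``would build these'' from skew Cauchy identities and difference-operator techniques. Given the paper's explicit remark, it is far from clear that such formulas exist in a form amenable to steepest descent at this location (note also that $q^{-\lambda^j_\alpha}\sim q^{-aM}$ blows up exponentially here, so the observables you write must be taken with $z_l$ on $M$-dependent shrinking contours, which you do not address). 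Your tightness argument is likewise only a gesture. In short, the missing idea in your proposal is the two-stage structure: prove one-level density convergence directly, then let the Gibbs property do the multi-level work---rather than attempting a direct multi-level observable analysis that the authors indicate does not go through here.
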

\begin{remark} In (\ref{DefY}) we reverse the order of $\lambda^j_i$ because the usual convention for signatures $\lambda =(\lambda_1, \dots, \lambda_n)$ demands that $\lambda_i$ be sorted in decreasing order, while for the eigenvalues of a random matrix the usual convention is that they are sorted in increasing order.
\end{remark}

We mention here that while the asymptotic behaviors near $IV$ and $V$ are similar, the arguments used to establish the two results are quite different. The arguments in \cite{dimitrov2016six} rely on a remarkable class of difference operators, which can be used to extract averages of observables for $\mathbb{P}^{N,M}_{u,v}$ near the left boundary of the model. These observables become useless for accessing the asymptotic behavior of the base of the model and consequently our approach in this paper is completely different, and arguably more direct as we explain here. In the remainder of this section we give an outline of our approach to proving Theorem \ref{thm:main}. The discussion below will involve certain expressions that will be properly introduced in the main text, and which should be treated as black boxes for the purposes of the outline. 

Using the integrability of the model we obtain the formula
$$\mathbb{P}_{u,v}^{N,M}( \lambda_1^k(\pi) = \mu_1, \cdots, \lambda_1^k(\pi) = \mu_k) \propto \mathsf{F}_\mu([u]^k) \cdot f(\mu; [v]^M, \rho),$$
for any $\mu =(\mu_1, \dots, \mu_k) \in \mathsf{Sign}^+_k$. A generalization of this fact appears as Lemma \ref{Projections} in the main text. We then derive certain combinatorial estimates for $ \mathsf{F}_\mu([u]^k) $ and a contour integral formula for $f(\mu; [v]^M, \rho)$ in Section \ref{Section3}, which are both suitable for studying the $M \rightarrow \infty$ limit of these expressions (for the function $\mathsf{F}_\mu([u]^k) $ the dependence on $M$ is reflected in the scaling of the signature $\mu$). The limit of the contour integral formula for $f(\mu; [v]^M, \rho)$ is derived in Section \ref{Section5} using a steepest descent argument, while the combinatorial estimates for $\mathsf{F}_\mu([u]^k)$ prove sufficient for taking its limit. Combining our two asymptotic results for $ \mathsf{F}_\mu([u]^k)$ and $f(\mu; [v]^M, \rho)$ we can prove that the sequence of random vectors in $\mathbb{R}^k$, given by $Y^k(N,M) = \left(Y_1^k(N,M;k), \dots, Y_k^k(N,M;k) \right)$ with $Y(N,M;k)$ as in Theorem \ref{thm:main} converges to the measure of the ordered eigenvalues of a random GUE matrix $\mu_{\textrm{GUE}}^k(dx_1,...,dx_k)$, given by
\begin{equation*} 
\mu_{\textrm{GUE}}^k (dx_1,...,dx_k)= {\bf 1}\{ x_k > x_{k-1} > \cdots > x_1\} \left(\frac{1}{\sqrt{2 \pi}} \right)^k \cdot \frac{1}{\prod_{i=1}^{k-1} i!} \cdot  \prod_{1 \leq i < j \leq k} (x_i-x_j)^2 \prod_{i=1}^k e^{-\frac{x_i}{2}} dx_i.
\end{equation*}
The last statement appears as Proposition \ref{PropHerm} in the text.

The above paragraph explains how we show that the top row of $Y(N,M;k)$ converges to the top row of the GUE-corners process of rank $k$. To obtain the full convergence statement we combine our top row convergence statement with the general formalism, introduced in \cite{dimitrov2016six},  involving Gibbs measures on interlacing arrays. In more detail, the top-row convergence of $Y(N,M;k)$ and the interlacing conditions 
$$\lambda^{i+1}_1(\pi) \geq \lambda^i_1(\pi) \geq \lambda^{i+1}_2(\pi) \geq \lambda^i_2(\pi) \geq \cdots \geq \lambda^i_i(\pi) \geq \lambda^{i+1}_{i+1}(\pi),$$
for $i = 1,\dots, k-1$ are enough to conclude the tightness of the full vector $Y(N,M;k)$ as $M \rightarrow \infty$. For each $N,M$ the measures $\mathbb{P}_{u,v}^{N,M}$ satisfy what we call the {\em six-vertex Gibbs property} and in the $M \rightarrow \infty$ limit this property becomes what is known as the {\em continuous Gibbs property}, see Section \ref{Section4.2}. Combining the latter statements, one can conclude that any subsequential limit of $Y(N,M;k)$ as $M \rightarrow \infty$ has top row distribution $\mu_{\textrm{GUE}}^k(dx_1,...,dx_k)$ and satisfies the continuous Gibbs property, and these two characteristics are enough to identify this limit with the GUE-corners process of rank $k$. As the sequence $Y(N,M;k)$ is tight and all subsequential limits are the same and equal to the GUE-corners of rank $k$, we conclude the weak convergence of $Y(N,M;k)$. This argument is given in Section \ref{Section4.2}.
\subsection{Outline of the paper}
In Section \ref{Section2} we define and state some facts about the functions $\mathsf{F}_{\lambda/\mu}$ and $\mathsf{G}^c_{\lambda/\mu}$, we also define inhomogeneous versions of $\mathbb{P}^{N,M}_{u,v}$ called $\mathbb{P}_{{\bf u}, {\bf v}}$ and give formulas for their projections. In Section \ref{Section3} we derive certain combinatorial estimates for $\mathsf{F}_\mu([u]^k)$ and a contour integral formula for $f(\mu;{\bf v}, \rho)$, where the latter appear in our projection formulas for $\mathbb{P}_{{\bf u}, {\bf v}}$ from Section \ref{Section2}. In Section \ref{Section4} we prove Theorem \ref{thm:main} modulo Lemma \ref{BMAs} , which is proved in Section \ref{Section5}.
\subsection{Acknowledgements}
The authors would like to thank Ivan Corwin and Ioannis Karatzas for many helpful conversations. M. R. is partially supported by the NSF grant DMS:1664650. E.D. is partially supported by the Minerva Foundation Fellowship.

%-------------------------------------------------------------------------------------------------------------------------------------------------------------------------------------------------
% Section 2
%
%-------------------------------------------------------------------------------------------------------------------------------------------------------------------------------------------------
\section{Measures on up-right paths} \label{Section2}
In this section we provide some results about a certain class of measures $\mathbb{P}_{{\bf u}, {\bf v}}$ that are inhomogeneous analogues of the measures $\mathbb{P}^{N,M}_{u,v}$ from Section \ref{Section1.2}. For the most part, this section summarizes the results in \cite[Section 2]{dimitrov2016six}.

%-------------------------------------------------------------------------------------------------------------------------------------------------------------------------------------------------
% Section 2.1
%
%-------------------------------------------------------------------------------------------------------------------------------------------------------------------------------------------------
\subsection{Symmetric rational functions} \label{Section2.1}
In this section we introduce some necessary notation from \cite{borodin2018higher} and summarize some of the results from the same paper. A {\em signature} of length $N$ is a sequence $\lambda = (\lambda_1 \geq \lambda_2 \geq \cdots \geq \lambda_N)$ with $\lambda_i \in \mathbb{Z}$ for $i = 1, \dots, N$. A signature $\lambda$ will sometimes be represented by $0^{m_0(\lambda)}1^{m_1(\lambda)}2^{m_2(\lambda)} \dots$ where $m_i(\lambda):=|\{ j : \lambda_j=i\}|$ is the number of times $i$ appears in the list $(\lambda_1,\dots,\lambda_N)$. We denote by $\mathsf{Sign}_N$ the set of all signatures of length $N$ and by $\mathsf{Sign}^+_N$ the set of signatures of length $N$ with $\lambda_N \geq 0$. We also denote by $\mathsf{Sign}^+:= \sqcup_{N \geq 0} \mathsf{Sign}^+_N$ the set of all non-negative signatures. We recall for later use the $q$-Pochhammer symbol $(a;q)_n:=(1-a)(1-q a)\cdots(1-q^{n-1} a).$

In what follows, we define the {\em weight} of a finite collection of up-right paths in some region $D$ of $\mathbb{Z}^2$, which is equal to the product of the weights of all vertices that belong to the path collection. Throughout we will always assume that the weight of an empty vertex is $1$ and so alternatively the weight of a path configuration can be defined through the product of the weights of all vertices in $D$. Figure \ref{S2_1} gives examples of collections of up-right paths.

The path configuration at  a vertex is determined by four non-negative integers $(i_1,j_1;i_2,j_2)$, where $i_1$ (resp. $i_2$) is the number of incoming (resp. outgoing) vertical paths, and $j_1$ (resp. $j_2$) is the number of incoming (resp. outgoing) horizontal paths, see Figure \ref{S1_2}. If the path configuration of a vertex is $(i_1,j_1;i_2,j_2)$ we will also say that the vertex is of type $(i_1,j_1;i_2,j_2)$. Vertex weights will be given as a function of these four numbers. We require the number of paths entering and leaving each vertex to be the same, i.e. $i_1+j_1=i_2+j_2$, and we will constrain the horizontal number of paths by $j_1,j_2 \in \{0,1\}$ (the weight of any vertex that does not satisfy these two conditions is $0$). 

We consider two sets of vertex weights depending on parameters $s$ and $q$ (these are fixed throughout this section), and a \emph{spectral parameter} $u$. The first set of vertex weights is given by
\begin{equation}\label{eq:weights}
\begin{split}
&w_u(g,0;g,0) = \frac{1 - sq^gu}{1-su}, \hspace{20mm} w_u(g+1,0; g,1) = \frac{(1-s^2q^g)u}{1-su}\\
&w_u(g,1;g,1) = \frac{u - sq^g}{1-su}, \hspace{22mm} w_u(g,1;g+1,0) = \frac{1 - q^{g+1}}{1-su},
\end{split}
\end{equation}
where $g$ is any nonnegative integer and all other weights are assumed to be $0$. The second set of weights, called \emph{conjugated} weights, are defined by
\begin{equation}\label{eq:weightsconjugate}
\begin{split}
&w^c_u(g,0;g,0) = \frac{1 - sq^gu}{1-su}, \hspace{20mm} w^c_u(g+1,0; g,1) = \frac{(1-q^{g+1})u}{1-su}\\
&w^c_u(g,1;g,1) = \frac{u - sq^g}{1-su}, \hspace{22mm} w^c_u(g,1;g+1,0) = \frac{1 - s^2q^{g}}{1-su},
\end{split}
\end{equation}
where as before  $g \in \mathbb{Z}_{\geq 0}$ and all other weights are assumed to be $0$. For more background and motivation for this particular choice of weights we refer the reader to \cite[Section 2]{borodin2018higher}.\\

Let us fix $n \in \mathbb{N}$, $n$ indeterminates $u_1, \dots, u_n$ and the region $D_n = \mathbb{Z}_{\geq 0} \times \{ 1, \dots, n \}$. Let $\pi$ be a finite collection of up-right paths in $D_n$, which end in the top boundary, but are allowed to start from the left or bottom boundary of $D_n$. We denote the path configuration at vertex $(i,j) \in D_n$ by $\pi(i,j)$. Then the weight of $\pi$ with respect to the two sets of weights above is defined through
\begin{equation}\label{defWeights}
\mathcal{W}(\pi) = \prod_{i = 0}^\infty \prod_{j = 1}^n w_{u_j}(\pi(i,j)), \hspace{20mm} \mathcal{W}^c(\pi) = \prod_{i = 0}^\infty \prod_{j = 1}^n w_{u_j}^c(\pi(i,j)).
\end{equation}
Note that from (\ref{eq:weights}) and (\ref{eq:weightsconjugate}) we have that $w_u(0,0;0,0) =  1 = w_u^c(0,0;0,0)$ and so the above products are in fact finite. With the above notation we define the following partition functions.
\begin{definition} \label{def:G} Let $N ,n \in \mathbb{Z}_{\geq 0}$, $\lambda, \mu \in \mathsf{Sign}_N^+$ and $u_1, \dots, u_n \in \mathbb{C}$ be given. Let $\mathcal{P}^c_{\lambda/\mu}$ be the collection of up-right paths $\pi$ on $D_n$, which
\begin{itemize}
\item begin with $N$ vertical edges $(\mu_i,0) \to (\mu_i,1)$ for $i=1,\dots,N$ along the horizontal axis;
\item end with $N$ vertical edges $(\lambda_i,n) \to (\lambda_i,n+1)$ for $i=1,\dots,N$. 
\end{itemize}Then we define
\begin{equation}\label{GdefEq}
\mathsf{G}^c_{\lambda / \mu} (u_1, \dots, u_n) = \sum_{\pi \in \mathcal{P}_{\lambda/\mu}^c} \mathcal{W}^c(\pi).
\end{equation}
We also use the abbreviation $\mathsf{G}_{\lambda}^c$ for $\mathsf{G}_{\lambda/(0,0,\dots, 0)}^c.$ 
\end{definition}
For the second set of weights we have an analogous definition.
\begin{definition}\label{DefF}
Let $N, n \in \mathbb{Z}_{\geq 0}$, $\mu \in \mathsf{Sign}^+_N$, $\lambda \in \mathsf{Sign}_{N+n}^+$ and $u_1, \dots, u_n \in \mathbb{C}$ be given. Let $\mathcal{P}_{\lambda/\mu}$ be the collection of up-right paths $\pi$ on $D_n$, which 
\begin{itemize}
\item begin with edges $(\mu_i,0) \to (\mu_i,1)$ for $i=1,\dots,N$ along the bottom boundary of $D_n$ and with edges $(-1,y) \to (0,y)$ for $y=1,\dots,n$ along the left boundary;
\item end with $N+n$ vertical edges $(\lambda_i,k) \to (\lambda_i,n+1)$ for $i=1,\dots,N+n$. 
\end{itemize}
Then we define
\begin{equation}\label{FdefEq}
\mathsf{F}_{\lambda/\mu}(u_1,\dots,u_n) =\sum_{\pi \in \mathcal{P}_{\lambda/\mu}} \mathcal{W}(\pi).
\end{equation}
We also use the abbreviation $\mathsf{F}_\lambda = \mathsf{F}_{\lambda/ \varnothing}.$
\end{definition}
Path configurations that belong to $\mathcal{P}_{\lambda/ \mu}$ and $\mathcal{P}^c_{\lambda/ \mu}$ are depicted in Figure \ref{S2_1}. In the definitions above we define the weight of a collection of paths to be $1$ if it has no interior vertices. Also, the weight of an empty collection of paths is $0$. \\
\begin{figure}[h]
\includegraphics[scale=0.6]{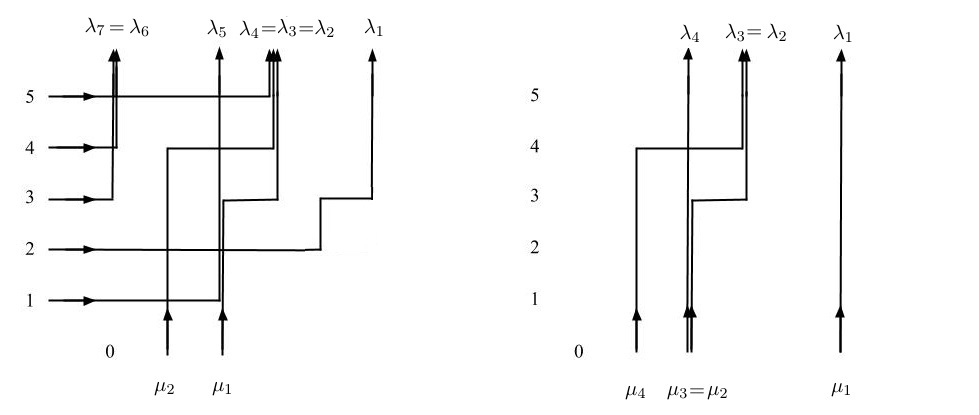}
 \captionsetup{width=0.9\linewidth}
\caption{Path collections belonging to $\mathcal{P}_{\lambda/\mu}$ (left) and $\mathcal{P}^c_{\lambda / \mu}$ (right).
}
\label{S2_1}
\end{figure}

Below we summarize some of the properties of the functions $\mathsf{G}^c_{\lambda/ \mu}$ and $\mathsf{F}_{\lambda/ \mu}$ in a sequence of propositions; we refer the reader to \cite[Section 4]{borodin2018higher} for the proofs. We mention here that the statements we write below for $\mathsf{G}_{\lambda/\mu}^c$ appear in \cite[Section 4]{borodin2018higher} for a slightly different but related function $\mathsf{G}_{\lambda/\mu}$. The function $\mathsf{G}_{\lambda/\mu}$ has the same definition as $\mathsf{G}_{\lambda/\mu}^c$ except that one uses the vertex weights (\ref{eq:weights}) rather than the conjugated weights (\ref{eq:weightsconjugate}). One directly checks that the two sets of weights are related through the equation
$$w_u^c(i_1, j_1;i_2, j_2) = \frac{(q;q)_{i_1} (s^2;q)_{i_2} }{(q;q)_{i_2}(s^2; q)_{i_1}} \cdot w_{u}(i_1, j_1;i_2, j_2),$$
which results in the relation
\begin{equation}\label{relation}
\mathsf{G}_{\lambda/\mu}^c=\frac{c(\lambda)}{c(\mu)} \cdot \mathsf{G}_{\lambda/\mu} , \mbox{ where }c(\lambda)=\prod_{k=0}^{\infty}\frac{(s^2;q)_{n_k}}{(q;q)_{n_k}} \mbox{ for $\lambda = 0^{n_0}1^{n_1}2^{n_2} \dots $}.
\end{equation}

\begin{proposition}\label{symmetricFun}\cite[Proposition 4.5]{borodin2018higher}
The functions $\mathsf{F}_{\lambda/\mu}(u_1,\dots ,u_n)$ and $\mathsf{G}^c_{\lambda/\mu}(u_1,\dots,u_n)$ defined above are rational symmetric functions in the variables $u_1,\dots,u_n$.
\end{proposition}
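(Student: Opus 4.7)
The plan is to first verify rationality and then establish symmetry via a Yang-Baxter (train) argument. For rationality, each vertex weight in (\ref{eq:weights}) and (\ref{eq:weightsconjugate}) is rational in the spectral parameter. Because the total number of paths is fixed and all paths are up-right, every $\pi \in \mathcal{P}_{\lambda/\mu}$ or $\mathcal{P}^c_{\lambda/\mu}$ is confined to the finite rectangle $\{0,1,\dots,\lambda_1\} \times \{1,\dots,n\}$; outside this rectangle every vertex is of type $(0,0;0,0)$ and has weight $1$. Hence $\mathsf{F}_{\lambda/\mu}$ and $\mathsf{G}^c_{\lambda/\mu}$ are finite sums of products of row factors, with row $j$ depending only on $u_j$, and hence rational in each $u_i$.

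For symmetry it suffices to show invariance under each adjacent swap $u_i \leftrightarrow u_{i+1}$. The weights $w_u$ (and $w_u^c$) from \cite{borodin2018higher} are the matrix elements of an $L$-operator that intertwines with an $R$-matrix $R_{u,v}$ through a Yang-Baxter equation of the form $R_{u,v}\,(L_u \otimes L_v) = (L_v \otimes L_u)\,R_{u,v}$. The strategy is to insert a cross-vertex $R_{u_i, u_{i+1}}$ at the left edge of the strip, between rows $i$ and $i+1$, then propagate it rightward column-by-column using the YBE (so the sum over configurations is unchanged at each step), and finally remove it at the right edge. The net effect is to exchange the two row spectral parameters, giving $\mathsf{F}_{\lambda/\mu}(\dots, u_i, u_{i+1}, \dots) = \mathsf{F}_{\lambda/\mu}(\dots, u_{i+1}, u_i, \dots)$, and similarly for $\mathsf{G}^c_{\lambda/\mu}$.

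The main technical obstacle is verifying that the boundary insertion and removal of $R_{u_i,u_{i+1}}$ contribute a factor of exactly $1$. At the left edge, the $R$-vertex receives specified horizontal inputs prescribed by the boundary data ($\mathsf{G}^c$: both rows enter empty from the left; $\mathsf{F}$: both carry one path), and the relevant diagonal $R$-matrix entry is normalized to $1$, forcing the outgoing horizontal edges to match. At the right edge, both incoming arrows to the $R$-vertex are empty, since all paths live in a bounded rectangle, and the corresponding all-empty $R$-entry is also $1$. These two normalizations follow from the explicit $R$-matrix given in \cite[Section 2]{borodin2018higher} and are the only nontrivial inputs to the argument. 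For $\mathsf{G}^c_{\lambda/\mu}$ one can instead invoke the relation (\ref{relation}) and deduce symmetry from that of $\mathsf{G}_{\lambda/\mu}$, since the proportionality factor $c(\lambda)/c(\mu)$ depends only on the boundary data and not on $u_1,\dots,u_n$. Once adjacent-transposition invariance is established, the full symmetric group action follows, completing the proof.
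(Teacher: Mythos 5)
Your argument is correct and is the standard Yang--Baxter ``train'' argument, which is exactly the method used in the cited reference \cite[Proposition~4.5]{borodin2018higher}; the present paper offers no independent proof and simply invokes that result. The one point worth stating precisely is the boundary bookkeeping: what the argument actually requires is that the diagonal $R$-matrix entries at the two boundary configurations agree (for $\mathsf{F}_{\lambda/\mu}$ the left boundary is $(1,1)$ and the right is $(0,0)$, so one needs $R(1,1;1,1)=R(0,0;0,0)$, while for $\mathsf{G}^c_{\lambda/\mu}$ both boundaries are $(0,0)$ and the cancellation is automatic), and this holds for the $a_1=a_2=1$ normalization of the fundamental six-vertex $R$-matrix used in \cite{borodin2018higher}.
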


\begin{proposition}\label{Branching}\cite[Proposition 4.6]{borodin2018higher}
{\bf 1.} For any $N, n_1, n_2 \in \mathbb{Z}_{\geq 0}$, $\mu \in \mathsf{Sign}_N^+$ and $\lambda \in \mathsf{Sign}_{N + n_1 + n_2}^+$ one has
\begin{equation}\label{branchF}
\mathsf{F}_{\lambda / \mu}(u_1,\dots,u_{n_1+ n_2}) = \sum_{ \kappa \in \mathsf{Sign}_{N + n_1}^+ }\mathsf{F}_{\lambda / \kappa}(u_{n_1 + 1},\dots,u_{n_1 + n_2}) \mathsf{F}_{\kappa / \mu}(u_1,\dots,u_{n_1}).
\end{equation}
{\bf 2.} For any $N, n_1, n_2 \in \mathbb{Z}_{\geq 0}$ and $\lambda, \mu \in \mathsf{Sign}_{N}^+$, one has
\begin{equation}
\mathsf{G}^c_{\lambda/\mu}(u_1,\dots,u_{n_1 + n_2}) = \sum_{ \kappa \in \mathsf{Sign}_{N }^+ }\mathsf{G}^c_{\lambda / \kappa}(u_{n_1 + 1},\dots,u_{n_1 + n_2}) \mathsf{G}^c_{\kappa / \mu}(u_1,\dots,u_{n_1}).
\end{equation}
\end{proposition}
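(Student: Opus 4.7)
My plan is to prove both branching identities by the same horizontal-slicing argument on the strip $D_{n_1+n_2}$, which is how the partition functions $\mathsf{F}_{\lambda/\mu}$ and $\mathsf{G}^c_{\lambda/\mu}$ are defined. The weight $\mathcal{W}(\pi)$ in (\ref{defWeights}) is a product over the rows $j = 1, \dots, n_1 + n_2$, and in row $j$ only the spectral parameter $u_j$ appears. Thus any configuration in the strip decomposes naturally when we cut along the horizontal line $y = n_1 + 1/2$.

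For part (1), I would observe that a path collection $\pi \in \mathcal{P}_{\lambda/\mu}$ on $D_{n_1+n_2}$ is uniquely determined by the following data: the set of vertical edges crossing the line $y = n_1 + 1/2$, encoded as a signature $\kappa$, together with the restriction $\pi_1$ of $\pi$ to rows $1,\dots,n_1$ and the restriction $\pi_2$ to rows $n_1+1,\dots,n_1+n_2$. Since $\pi$ has $N$ paths entering at the bottom at positions $\mu_i$ and $n_1$ paths entering from the left in rows $1,\dots,n_1$, the line $y = n_1 + 1/2$ is crossed by exactly $N+n_1$ vertical edges, so $\kappa \in \mathsf{Sign}^+_{N+n_1}$. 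The restriction $\pi_1$ then lies in $\mathcal{P}_{\kappa/\mu}$ on $D_{n_1}$ (with the $n_1$ left-boundary paths), while $\pi_2$ lies in $\mathcal{P}_{\lambda/\kappa}$ on $D_{n_2}$ (with the remaining $n_2$ left-boundary paths), and conversely every compatible triple $(\kappa, \pi_1, \pi_2)$ reassembles into a unique element of $\mathcal{P}_{\lambda/\mu}$. Under this bijection the weight factors as
\begin{equation*}
\mathcal{W}(\pi) = \prod_{j=1}^{n_1} \prod_{i \geq 0} w_{u_j}(\pi_1(i,j)) \cdot \prod_{j=1}^{n_2} \prod_{i \geq 0} w_{u_{n_1+j}}(\pi_2(i,j)) = \mathcal{W}(\pi_1) \cdot \mathcal{W}(\pi_2),
\end{equation*}
where on the right the row index of $\pi_2$ has been shifted so that its $j$-th row uses parameter $u_{n_1+j}$. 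Summing (\ref{FdefEq}) over $\pi \in \mathcal{P}_{\lambda/\mu}$ and reindexing the sum as a sum first over $\kappa$ and then over $(\pi_1,\pi_2)$ yields exactly (\ref{branchF}).

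Part (2) is entirely analogous with two small modifications: one uses the conjugated weights $w_u^c$ and the bijection with the corresponding partition functions for $\mathcal{P}^c_{\lambda/\mu}$. Because no paths enter $D_{n_1+n_2}$ from the left in the definition of $\mathsf{G}^c_{\lambda/\mu}$, the total number of vertical edges crossing any horizontal line is the same constant $N$, so the intermediate signature $\kappa$ satisfies $\kappa \in \mathsf{Sign}^+_N$ rather than lying in a larger signature set. The same weight-factorization and bijection argument then gives the identity, provided one checks that all four side boundaries match up consistently (which they do because both $\pi_1$ and $\pi_2$ start and end purely with vertical edges on their bottom and top boundaries, not horizontal ones).

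There is no real obstacle here: the statement is essentially a tautology once one recognizes that $\mathsf{F}$ and $\mathsf{G}^c$ are transfer matrix partition functions and that transfer matrices compose. The only points that require mild care are (i) verifying that the weight at each vertex only uses the spectral parameter of its row, which is immediate from (\ref{eq:weights}) and (\ref{eq:weightsconjugate}), and (ii) checking that the number and type of paths crossing the slicing line is correctly captured by a signature of the right length in each case. Once these are in hand, both identities follow by the standard rearrangement of the sum over $\pi$ into a sum over intermediate states $\kappa$ and pairs of sub-configurations.
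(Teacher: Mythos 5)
Your proof is correct. The paper does not give its own proof of Proposition \ref{Branching} --- it simply cites \cite[Proposition 4.6]{borodin2018higher} and refers the reader there --- but the horizontal-slicing/transfer-matrix argument you give is precisely the standard proof from that reference: the weight factors over rows because each row carries a single spectral parameter, cutting along $y=n_1+1/2$ identifies $\mathcal{P}_{\lambda/\mu}$ (resp.\ $\mathcal{P}^c_{\lambda/\mu}$) with triples $(\pi_1,\kappa,\pi_2)$, and summing in that order yields the branching identities, with the length of $\kappa$ being $N+n_1$ for $\mathsf{F}$ (paths enter from the left in rows $1,\dots,n_1$) and $N$ for $\mathsf{G}^c$ (no left-boundary entries).
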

{\raggedleft The properties of the last proposition are known as {\em branching rules}.}

\begin{definition}
We say that two complex numbers $u,v \in \mathbb{C}$ are {\em admissible} with respect to the parameter $s$ if $\left| \frac{u - s}{1 - su} \cdot \frac{v -s }{1 - sv}  \right| < 1.$
\end{definition}

\begin{proposition}\label{PPieri}\cite[Corollary 4.10]{borodin2018higher} Let $u_1,\dots,u_N$ and $v_1,\dots,v_K$ be complex numbers such that $u_i,v_j$ are admissible for all $i = 1,\dots,N$ and $j = 1,\dots,K$. Then for any $\lambda, \nu \in \mathsf{Sign}^+$
\begin{equation}\label{CauchyId}
\begin{split}
&\sum_{\kappa \in \mathsf{Sign}^+} \mathsf{G}^c_{\kappa / \lambda} (v_1,\dots,v_K)\mathsf{F}_{\kappa/\nu}(u_1,\dots,u_N) =\\
& \prod_{i = 1}^N \prod_{j = 1}^K \frac{1 - qu_iv_j}{1 - u_iv_j} \sum_{ \mu \in \mathsf{Sign}^+}\mathsf{F}_{\lambda /\mu}(u_1,\dots,u_N) \mathsf{G}^c_{\nu / \mu} (v_1,\dots,v_K).
\end{split}
\end{equation}
\end{proposition}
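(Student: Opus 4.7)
The plan is to realize both sides of (\ref{CauchyId}) as partition functions of the same two-layer up-right path model on a semi-infinite horizontal strip, and to use the Yang--Baxter equation to identify the two evaluations.

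First I would set up the configuration. Consider the strip $D_{N+K} = \mathbb{Z}_{\geq 0} \times \{1,\dots,N+K\}$ where the bottom $N$ rows carry spectral parameters $u_1,\dots,u_N$ together with the non-conjugated weights (\ref{eq:weights}), and the top $K$ rows carry spectral parameters $v_1,\dots,v_K$ together with the conjugated weights (\ref{eq:weightsconjugate}). Impose the following boundary data: one horizontal edge $(-1,y)\to(0,y)$ entering on the left for each $y=1,\dots,N$ (as in Definition \ref{DefF}); $N$ vertical edges starting from $\{\nu_i\}$ along the bottom; no paths entering the left boundary of the top $K$ rows; and $N+K$ prescribed exit columns $\{\lambda_i\}$ along the top of the strip. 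Cutting along the horizontal line $y = N+\tfrac12$ and recording the intermediate signature $\kappa$ at that interface, the branching rules built into the definitions of $\mathsf{F}$ and $\mathsf{G}^c$ (Proposition \ref{Branching}) show that this partition function equals
\[
\sum_{\kappa \in \mathsf{Sign}^+} \mathsf{F}_{\kappa/\nu}(u_1,\dots,u_N) \, \mathsf{G}^c_{\lambda/\kappa}(v_1,\dots,v_K),
\]
which (after relabeling via symmetry, Proposition \ref{symmetricFun}) is the left-hand side of (\ref{CauchyId}).

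Next, I would apply the Yang--Baxter equation relating the non-conjugated weights $w_u$, the conjugated weights $w^c_v$, and a ``cross'' $R$-vertex with spectral parameter pair $(u_i,v_j)$. The Yang--Baxter equation from \cite[Section 4]{borodin2018higher} allows one to interchange one $u$-row with one $v^c$-row at the cost of introducing a cross vertex at the far right boundary (or removing one from the far left). Iterating this swap across all $N\cdot K$ row pairs transposes the two blocks: after the procedure the bottom $K$ rows carry the $v^c$-weights and the top $N$ rows carry the $u$-weights, while the accumulated contribution of the cross vertices, contracted against the trivial (empty) horizontal states available at infinity on the right, factors out exactly as the scalar $\prod_{i=1}^N \prod_{j=1}^K \tfrac{1-qu_iv_j}{1-u_iv_j}$—this ratio is precisely the eigenvalue of a single cross vertex on the empty horizontal state. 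Cutting the swapped configuration along the new interface $y = K+\tfrac12$ and recording the intermediate signature $\mu$ gives $\sum_\mu \mathsf{G}^c_{\nu/\mu}(v)\,\mathsf{F}_{\lambda/\mu}(u)$, producing the right-hand side.

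Two issues need care. The first is verifying the precise form of the Yang--Baxter intertwiner for the weight families (\ref{eq:weights}) and (\ref{eq:weightsconjugate}) and computing the eigenvalue of its action on the empty-horizontal vacuum; this is where the prefactor $\tfrac{1-qu_iv_j}{1-u_iv_j}$ emerges and is the algebraic heart of the argument. The second, and the one I expect to be the main obstacle, is the convergence of all the interior sums: because the horizontal strip is semi-infinite, one must justify interchanging the finite-rank boundary contractions with the infinite sums over interior path configurations and over the intermediate signatures $\kappa$ and $\mu$. The admissibility hypothesis $\bigl|\tfrac{u_i-s}{1-su_i}\cdot\tfrac{v_j-s}{1-sv_j}\bigr|<1$ is exactly the condition that bounds the spectral radius of the single-row transfer operator on horizontal states with one occupied edge, and it gives the geometric decay in the horizontal direction needed to apply Fubini to the two partition-function evaluations. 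With these two ingredients in place the identity (\ref{CauchyId}) follows, matching \cite[Corollary 4.10]{borodin2018higher}.
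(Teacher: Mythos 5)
The paper does not prove Proposition \ref{PPieri}; it cites \cite[Corollary 4.10]{borodin2018higher} directly, so there is no in-paper argument to compare against. Your high-level plan — realize both sides of (\ref{CauchyId}) as lattice partition functions, use the Yang--Baxter equation to commute row blocks, extract the prefactor $\prod\frac{1-qu_iv_j}{1-u_iv_j}$ from the $R$-vertex acting on the empty horizontal vacuum, and justify the interchange of sums via the admissibility hypothesis — is indeed the strategy of the cited reference, and the two technical points you flag are the right ones.

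There is, however, a genuine gap in the geometric setup. Stacking the $\mathsf{F}$-layer below and the $\mathsf{G}^c$-layer above, with all paths travelling upward, and then cutting at $y=N+\tfrac12$, produces $\sum_\kappa \mathsf{F}_{\kappa/\nu}(u)\,\mathsf{G}^c_{\lambda/\kappa}(v)$: the bottom signature of the $\mathsf{G}^c$ block is the interface $\kappa$ and the top is $\lambda$. But the left side of (\ref{CauchyId}) is $\sum_\kappa \mathsf{G}^c_{\kappa/\lambda}(v)\,\mathsf{F}_{\kappa/\nu}(u)$, in which $\kappa$ appears as the \emph{outer} index of both functions, and $\mathsf{G}^c_{\lambda/\kappa}\neq\mathsf{G}^c_{\kappa/\lambda}$ in general — for up-right paths these two have essentially disjoint supports. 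Your proposed fix, ``relabeling via symmetry, Proposition \ref{symmetricFun},'' does not apply: that proposition concerns symmetry of $\mathsf{G}^c_{\lambda/\mu}$ in the spectral variables $v_1,\dots,v_K$ and says nothing about swapping its two signature indices. Equation (\ref{CauchyId}) is a \emph{pairing} identity (sum over the common outer index on the left, common inner index $\mu$ on the right), not a vertical concatenation. To realize the LHS you must reflect the $\mathsf{G}^c$-block vertically, with paths descending from $\lambda$ on the top boundary of the strip down to $\kappa$ at the interface — equivalently, work with the transpose of the $\mathsf{G}^c$-row transfer operator. After that reflection, the Yang--Baxter commutation moves the $\mathsf{F}$-rows past the reflected $\mathsf{G}^c$-rows and produces $\mathsf{G}^c_{\nu/\mu}$, as required, rather than $\mathsf{G}^c_{\mu/\nu}$. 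As written, your configuration would at best prove a different commutation identity, with the index error propagating symmetrically to the right-hand side. The $R$-vertex eigenvalue computation and the convergence discussion via admissibility survive unchanged once the reflected picture is in place.
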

\begin{remark}
Equation (\ref{CauchyId}) is called the {\em skew Cauchy identity} for the functions $\mathsf{F}_{\lambda / \mu}$ and $\mathsf{G}^c_{\lambda/ \mu}$ because of its similarity with the skew Cauchy identities for Schur, Hall-Littlewood, or Macdonald symmetric functions \cite{Mac}. The sum on the right-hand side (RHS) of (\ref{CauchyId}) has finitely many non-zero terms and is thus well-defined. The left-hand side (LHS) can potentially have infinitely many non-zero terms, but part of the statement of the proposition is that if the variables are admissible, then this sum is absolutely convergent and numerically equals the right side.
\end{remark}
A special case of (\ref{CauchyId}), when $\lambda = \varnothing$ and $\nu = (0,0,\dots,0)$ leads us to the {\em Cauchy identity}
\begin{equation}\label{CauchyS2}
\sum_{ \nu \in \mathsf{Sign}_N^+} \mathsf{F}_\nu(u_1,\dots,u_N) \mathsf{G}^c_\nu(v_1,\dots,v_K) = (q;q)_N \prod_{i = 1}^N \left( \frac{1}{1 - su_i} \prod_{j = 1}^K \frac{1 - qu_i v_j}{1 - u_iv_j}\right) .
\end{equation}

We end this section with the {\em symmetrization formulas} for $\mathsf{G}^c_\nu$ and $\mathsf{F}_\mu$ and also formulas for the functions when the variable set forms a geometric progression with parameter $q$.
\begin{proposition}\label{symmF}\cite[Theorem 4.14]{borodin2018higher}
{\bf 1.} For any $N \in \mathbb{Z}_{\geq 0}$, $\mu \in \mathsf{Sign}_N^+$ and $u_1,\dots,u_N \in \mathbb{C}$
\begin{equation}\label{symmFEq}
\mathsf{F}_{\mu}(u_1,\dots,u_{N}) = \frac{(1-q)^N}{\prod_{i = 1}^N(1 -su_i)}\sum_{\sigma \in S_N} \sigma \left( \prod_{1 \leq \alpha < \beta \leq N} \frac{u_{\alpha} - q u_\beta}{u_\alpha - u_\beta} \left( \frac{ u_i - s}{1 - su_i}\right)^{\mu_i}\right).
\end{equation}
{\bf 2.} Let $n \geq 0$ and $\mathsf{Sign}_n^+ \ni \nu = 0^{n_0}1^{n_1}2^{n_2}\cdots$. Then for any $N \geq n - n_0$ and $u_1,\dots,u_N \in \mathbb{C}$ we have
\begin{equation}\label{symmGEq}
\begin{split}
\mathsf{G}^c_{\nu}(u_1,\dots,u_{N}) = \frac{(1-q)^N(q;q)_n}{\prod_{i = 1}^N(1 - su_i)(q;q)_{N- n + n_0}(q;q)_{n_0}} \prod_{k = 1}^{\infty} \frac{(s^2;q)_{n_k}}{(q;q)_{n_k}} \times \\ 
 \sum_{\sigma \in S_N} 
\sigma \left( \prod_{1 \leq \alpha < \beta \leq N} \frac{u_{\alpha} - q u_\beta}{u_\alpha - u_\beta} \prod_{i = 1}^n \left( \frac{ u_i - s}{1 - su_i}\right)^{\nu_i} \prod_{i = 1}^{n- n_0}\frac{u_i}{u_i - s}\prod_{j = n-n_0+1}^N(1 -sq^{n_0}u_j)\right).
\end{split}
\end{equation}
In both equations above, $S_N$ denotes the permutation group on $\{1,\dots,N\}$ and an element $\sigma \in S_N$ acts on the expression by permuting the variable set to $u_{\sigma(1)},\dots,u_{\sigma(N)}$. If $N < n-n_0$, then $\mathsf{G}^c_{\nu}(u_1,\dots,u_{N})$ is equal to $0$.
\end{proposition}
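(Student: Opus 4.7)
My plan is to prove both symmetrization identities by induction on $N$, powered by the Yang-Baxter equation (equivalently, the RLL exchange relation) satisfied by the weights \eqref{eq:weights} and \eqref{eq:weightsconjugate}. The base case $N=1$ is handled by direct path enumeration. For $\mu=(\mu_1)\in\mathsf{Sign}_1^+$ the set $\mathcal{P}_{(\mu_1)/\varnothing}$ on the strip $D_1$ contains a unique configuration: the path enters via $(-1,1)\to(0,1)$, traverses $\mu_1$ vertices of type $(0,1;0,1)$ with weight $(u-s)/(1-su)$ each, and turns upward at column $\mu_1$ through a vertex of type $(0,1;1,0)$ with weight $(1-q)/(1-su)$. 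Reading off the product gives $\mathsf{F}_{(\mu_1)}(u)=\frac{1-q}{1-su}\left(\frac{u-s}{1-su}\right)^{\mu_1}$, which matches the RHS of \eqref{symmFEq} for $N=1$ since $S_1$ is trivial and the product over $\alpha<\beta$ is empty. An analogous enumeration, separating the cases $\nu=(0)$ and $\nu=(k)$ with $k\geq 1$ (so that $n-n_0\in\{0,1\}$), verifies \eqref{symmGEq} for $N=1$.

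\textbf{Inductive step.} The partition function $\mathsf{F}_\mu(u_1,\dots,u_N)$ admits a transfer-matrix representation as a matrix element of an ordered product $B(u_1)\cdots B(u_N)$ acting on the column Hilbert space, where $B(u)$ is the entry of the monodromy matrix $T(u)$ creating one horizontal row at spectral parameter $u$ and $|\varnothing\rangle$ is the empty reference state. The Yang-Baxter equation for $w_u$ yields an RLL identity whose $BB$-sector reads
$$B(u)\,B(v) \;=\; \frac{u-qv}{u-v}\,B(v)\,B(u) \;+\; R_{A,D}(u,v),$$
where $R_{A,D}(u,v)$ is a linear combination of terms involving $A$- and $D$-entries of $T$. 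Iterating this exchange $\binom{N}{2}$ times produces the symmetrization factor $\prod_{\alpha<\beta}(u_{\sigma(\alpha)}-qu_{\sigma(\beta)})/(u_{\sigma(\alpha)}-u_{\sigma(\beta)})$ summed over $\sigma\in S_N$, while the $R_{A,D}$-terms are shown to annihilate $|\varnothing\rangle$ or to live in a different particle-number sector than $\langle\mu|$ and thus drop out. The remaining ``diagonal'' contribution factors into $N$ copies of the $N=1$ computation and yields both the prefactor $(1-q)^N/\prod_i(1-su_i)$ and the factor $\prod_i \bigl((u_i-s)/(1-su_i)\bigr)^{\mu_i}$ of \eqref{symmFEq}.

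\textbf{The} $\mathsf{G}^c$ \textbf{formula.} The identity \eqref{symmGEq} is proved along the same lines using the conjugate weights $w^c_u$, which satisfy the same Yang-Baxter equation and hence produce the same exchange factor $(u-qv)/(u-v)$. The combinatorial prefactor $(q;q)_n/[(q;q)_{N-n+n_0}(q;q)_{n_0}]\cdot\prod_k (s^2;q)_{n_k}/(q;q)_{n_k}$ is tracked through the relation \eqref{relation} together with the stabilizer of the multiplicity type $\nu=0^{n_0}1^{n_1}2^{n_2}\cdots$ inside $S_N$, which is responsible for the overcounting when summing over permutations of variables attached to equal parts. The factor $\prod_{i=1}^{n-n_0}u_i/(u_i-s)\cdot\prod_{j=n-n_0+1}^N (1-sq^{n_0}u_j)$ encodes the weight contribution of the $N-(n-n_0)$ rows that do not support a new positive part and therefore consist entirely of vertices of type $(k,0;k,0)$ for some $k$; each such row contributes exactly $(1-sq^{n_0}u_j)/(1-su_j)$. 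Finally, the vanishing $\mathsf{G}^c_\nu\equiv 0$ when $N<n-n_0$ is immediate, since the $n-n_0$ positive parts of $\nu$ must exit through distinct rows of the strip.

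\textbf{Main obstacle.} The principal technical difficulty will be the careful bookkeeping in the iterated Bethe-ansatz exchange: $R_{A,D}(u,v)$ contains many terms coming from the infinite-dimensional column space, and showing that all of them drop out of $\langle\mu|\cdot|\varnothing\rangle$ requires a clean particle-number / grading argument. A more pedestrian alternative, which avoids operator-algebraic machinery entirely, is to prove \eqref{symmFEq} inductively via the branching rule of Proposition \ref{Branching}(1) with $n_1=N-1$ and $n_2=1$; the single-variable skew function $\mathsf{F}_{\mu/\kappa}(u_N)$ is then computable in closed form by enumerating the unique top-row configuration for each interlacing $\kappa\prec\mu$, and the inductive step reduces to a rational-function identity that can be verified by a contour-residue argument applied to the symmetrization on the RHS.
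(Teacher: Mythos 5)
The paper does not prove Proposition \ref{symmF}; it simply cites \cite[Theorem 4.14]{borodin2018higher} and refers the reader there. So there is no internal proof to compare against, and your attempt must be judged on its own.

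Your primary mechanism contains a concrete error. In the algebraic Bethe ansatz built from the RLL relation $R_{12}(u,v)T_1(u)T_2(v)=T_2(v)T_1(u)R_{12}(u,v)$ for the six-vertex $R$-matrix, the $BB$-sector gives \emph{exact} commutativity, $B(u)B(v)=B(v)B(u)$, with no exchange factor and no lower-order terms. The factor $\frac{u-qv}{u-v}$ appears instead in the mixed sectors, e.g. in the $A(u)B(v)$ and $D(u)B(v)$ relations. This is in fact the reason $\mathsf{F}_\mu(u_1,\dots,u_N)$ is symmetric in its variables (Proposition \ref{symmetricFun}): the $B$'s commute, full stop. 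Consequently the core of your inductive step — "iterating this exchange $\binom{N}{2}$ times produces the symmetrization factor" — never gets started, and the claim that the $R_{A,D}$-contributions "drop out" is vacuous because there are no such contributions in the $BB$ sector. The Bethe-sum $\sum_\sigma\prod_{\alpha<\beta}\frac{u_{\sigma(\alpha)}-qu_{\sigma(\beta)}}{u_{\sigma(\alpha)}-u_{\sigma(\beta)}}\cdots$ does not arise from reordering a string of $B$'s; it arises when one expands the vector $B(u_1)\cdots B(u_N)|\varnothing\rangle$ in the position basis and matches it against the coordinate-Bethe-ansatz eigenfunctions of the free generator, which is a genuinely different computation that your sketch does not carry out.

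Your fallback route — induction on $N$ via the branching rule of Proposition \ref{Branching}(1), computing the single-variable skew piece $\mathsf{F}_{\mu/\kappa}(u_N)$ explicitly, and closing the induction with a rational-function identity checked by residues — is the approach that actually works, and it is close in spirit to the proof given in \cite[Section 4]{borodin2018higher} (their Pieri-type one-variable recursions, Theorems 4.12–4.13, are precisely the rational identity you allude to). But you have not carried out the two parts that do the work: the closed form of $\mathsf{F}_{\mu/\kappa}(u)$ over interlacing $\kappa\prec\mu$, and the residue verification of the resulting identity. For the $\mathsf{G}^c$ part, the $N=1$ base case is also more involved than you indicate: there $\nu$ has length $n$, not necessarily $1$, subject only to $n-n_0\le 1$, so the base case already requires tracking the stabilizer factor $(q;q)_n/[(q;q)_{N-n+n_0}(q;q)_{n_0}]\prod_k(s^2;q)_{n_k}/(q;q)_{n_k}$ in full. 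In short: the main line of attack is based on a false exchange relation, the alternative is correct in outline but is only a plan, not a proof.
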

\begin{remark} We mention here that the formulas in Proposition \ref{symmF} a priori make sense when $u_{\alpha} \neq u_{\beta}$ for $\alpha \neq \beta$ because the factors $u_\alpha - u_\beta$ appear in the denominator on the right sides of (\ref{symmFEq}) and (\ref{symmGEq}). However, the formulas can be extended continuously to all $(u_1, \dots, u_N)$ such that $u_i \not \in \{s, s^{-1} \}$ for all $i = 1, \dots, N$. One observes this after putting all summands under the same denominator and realizing that the numerator is a skew-symmetric polynomial in $(u_1, \dots, u_N)$, which is thus divisible by the Vandermonde determinant $\prod_{1 \leq \alpha < \beta \leq N}(u_\alpha - u_\beta)$. 
\end{remark}

\begin{proposition}\label{qGeom1}
{\bf 1.} For any $N \in \mathbb{Z}_{\geq 0}$, $\mu \in \mathsf{Sign}_N^+$ and $u \in \mathbb{C}$, one has
\begin{equation}
\mathsf{F}_{\mu}(u,qu,\dots,q^{N-1}u) = (q;q)_N\prod_{i = 1}^N \left ( \frac{1}{1 - sq^{i-1}u}  \left( \frac{ uq^{i-1} - s}{1 - sq^{i-1}u}\right)^{\mu_i}\right).
\end{equation}
{\bf 2.} Let $n \geq 0$ and $\mathsf{Sign}_n^+ \ni \nu = 0^{n_0}1^{n_1}2^{n_2}\cdots$. Then for any $N \geq n - n_0$ and $u \in \mathbb{C}$ we have
\begin{equation}
\begin{split}
\mathsf{G}^c_{\nu}(u,qu,\dots,q^{N-1}u) \hspace{-1mm} =  \hspace{-1mm} \prod_{k = 1}^\infty \frac{(s^2;q)_{n_k}}{(q;q)_{n_k}}\frac{(q;q)_N (su;q)_{N+n_0}(q;q)_n  \prod_{i = 1}^N \left( \frac{1}{1 - sq^{i-1}u} \left( \frac{q^{i-1}u - s}{1 - sq^{i-1}u}\right)^{\nu_j} \right)}{(q;q)_{N-n+n_0}(su;q)_n(q;q)_{n_0}(su^{-1};q^{-1})_{n-n_0}},
\end{split}
\end{equation}
where we agree that $\nu_j = 0$ if $j > n$. 
\end{proposition}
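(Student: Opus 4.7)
The plan is to derive both identities by specializing the symmetrization formulas of Proposition \ref{symmF} at $u_i = q^{i-1}u$ and showing that in the resulting sum over $S_N$ only the identity permutation gives a nonzero contribution. Under this substitution, the relevant Vandermonde-type factor transforms as
$$\sigma\!\left(\prod_{1\leq \alpha < \beta \leq N}\! (u_\alpha - q u_\beta)\right) = \prod_{\alpha<\beta} u\, q^{\sigma(\alpha)-1}\bigl(1 - q^{\sigma(\beta)-\sigma(\alpha)+1}\bigr),$$
which vanishes whenever there exists a pair $\alpha<\beta$ with $\sigma(\alpha) = \sigma(\beta)+1$. Equivalently, a permutation $\sigma$ contributes nontrivially only if $\sigma^{-1}(1)<\sigma^{-1}(2)<\cdots<\sigma^{-1}(N)$, which forces $\sigma=\mathrm{id}$. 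The denominators $u_{\sigma(\alpha)} - u_{\sigma(\beta)}$ remain nonzero under the specialization (for generic $q$), so no $0/0$ indeterminacy arises.

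For the identity permutation, the Vandermonde-type ratio telescopes: grouping factors by the difference $d=\beta-\alpha$,
$$\prod_{1\leq \alpha<\beta\leq N}\frac{u_\alpha-qu_\beta}{u_\alpha-u_\beta} = \prod_{d=1}^{N-1}\left(\frac{1-q^{d+1}}{1-q^d}\right)^{N-d} = \frac{(q;q)_N}{(1-q)^N}.$$
Substituting this into (\ref{symmFEq}) together with $\prod_i\bigl((u_i-s)/(1-su_i)\bigr)^{\mu_i}$ evaluated at $u_i=q^{i-1}u$, and cancelling the $(1-q)^N$ factors, gives Part 1 immediately.

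For Part 2 the same vanishing argument reduces (\ref{symmGEq}) to the identity permutation contribution, and the telescoping above produces the factor $(q;q)_N/(1-q)^N$. The remaining task is to match the extra products
$$\prod_{i=1}^{n-n_0}\frac{q^{i-1}u}{q^{i-1}u-s}\quad\text{and}\quad \prod_{j=n-n_0+1}^N(1-sq^{n_0}\cdot q^{j-1}u)$$
with the $q$-Pochhammer factors in the claim. The first product, after rewriting $q^{i-1}u/(q^{i-1}u-s) = 1/(1-sq^{-(i-1)}u^{-1})$, equals $(su^{-1};q^{-1})_{n-n_0}^{-1}$; the second product, via the index shift $k=n_0+j-1$, collapses to $\prod_{k=n}^{N+n_0-1}(1-sq^k u) = (su;q)_{N+n_0}/(su;q)_n$. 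Combining these with the $\nu_i$-dependent factors (using $\nu_i=0$ for $i>n$ to extend the product from $n$ to $N$) yields the claimed formula.

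The main obstacle is verifying rigorously that the vanishing of the specialized Vandermonde numerator eliminates every non-identity permutation; the algebraic combinatorics of the condition ``no $\alpha<\beta$ with $\sigma(\alpha)=\sigma(\beta)+1$" forcing $\sigma=\mathrm{id}$ must be stated carefully. The subsequent manipulations are routine $q$-Pochhammer bookkeeping once the correct telescoping and index-shift patterns are recognized.
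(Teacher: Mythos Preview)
Your approach is correct and is precisely the standard derivation: specializing the symmetrization formulas of Proposition~\ref{symmF} at $u_i=q^{i-1}u$, observing that the numerator factor $u_{\sigma(\alpha)}-qu_{\sigma(\beta)}$ vanishes whenever $\sigma(\alpha)=\sigma(\beta)+1$, and that this kills every $\sigma\neq\mathrm{id}$. The paper does not supply its own proof of this proposition but refers to \cite[Section~4]{borodin2018higher}, where exactly this argument is carried out; your telescoping and $q$-Pochhammer bookkeeping match that derivation.
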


%-------------------------------------------------------------------------------------------------------------------------------------------------------------------------------------------------
% Section 2.2
%
%-------------------------------------------------------------------------------------------------------------------------------------------------------------------------------------------------
\subsection{The measure $\mathbb{P}_{{\bf u}, {\bf v}}$}\label{Section2.2}
In this section we briefly explain how to construct the measure $\mathbb{P}_{{\bf u}, {\bf v}}$ and summarize some of its basic properties. For a more detailed derivation of this measure we refer the reader to \cite[Sections 2.2 and 2.3]{dimitrov2016six}.

Let us briefly explain the main steps of the construction of $\mathbb{P}_{{\bf u, v}}$. We begin by considering the bigger space of all up-right paths in the half-infinite strip that share no horizontal piece but are allowed to share vertical pieces. For each such collection of paths we define its weight using the functions from Section \ref{Section2.1}. Afterwards we specialize $s = q^{-1/2}$ in those functions and perform a limit transition for some of the other parameters. This procedure has the effect of killing the weight of those path configurations that share a vertical piece. Consequently, we are left with weights that are non-zero only for six-vertex configurations, are absolutely summable and their sum has a product form. We check that each weight is non-negative, and define $\mathbb{P}_{{\bf u, v}}$ as the quotient of these weights with the partition function. We explain this in more detail below.\\

We fix positive integers $N, M$, $J$, and $K = M + J$, as well as real numbers $q \in (0,1)$ and $s> 1$. In addition, we suppose ${\bf u} = (u_1,\dots,u_N)$ and ${\bf w} = (w_1,\dots,w_K)$ are positive real numbers, such that $\max_{i,j} u_i w_j < 1$ and $u:=\min_i u_i > s$. One readily verifies that the latter conditions ensure that $u_i, w_j$ are admissible with respect to $s$ for $i = 1,\dots,N$ and $j = 1,\dots,K$.

Let us go back to the setup of Section \ref{Section1.2}. We let $\mathcal{P}'_N$ be the collection of $N$ up-right paths drawn in the sector $D_N = \mathbb{Z}_{\geq 0} \times \{1,\dots,N\}$ of the square lattice, with all paths starting from a left-to-right edge entering each of the points $\{ (0,m): 1 \leq m \leq N\}$ on the left boundary and all paths exiting from the top boundary of $D_N$. We still assume that no two paths share a horizontal piece, but sharing vertical pieces is allowed. As in Section \ref{Section1.2} we let $\mathcal{P}_N \subset \mathcal{P}_N'$ be those collections of paths that do not share vertical pieces. For $\pi \in \mathcal{P}'_N$ and $k = 1,\dots,N$ we let $\lambda^k(\pi) \in \mathsf{Sign}_k^+$ denote the ordered $x$-coordinates of the intersection of $\pi$ with the horizontal line $y = k + 1/2$. We denote by $\pi(i,j)$ the type of the vertex at $(i,j) \in D_N$. We also let $f : \mathsf{Sign}_N^+ \rightarrow \mathbb{R}$ be given by 
$$f(\lambda; {\bf w}):= \mathsf{G}^c_{\mu}(w_1,\dots,w_J, w_{J+1}, \dots, w_{K}) = \sum_{ \mu \in \mathsf{Sign}_N^+} \mathsf{G}^c_{\mu}(w_1,\dots,w_J) \mathsf{G}^c_{\lambda/ \mu}(w_{J+1},\dots,w_{K}),$$
where the equality above follows from Proposition \ref{Branching}. With the above data, we define the weight of a collection of paths $\pi$ by
$$\mathcal{W}^f_{{\bf u}, {\bf w}}(\pi) = \prod_{i = 1}^N \prod_{j = 1}^\infty w_{u_i}(\pi(i,j)) \times f(\lambda^N(\pi); {\bf w}).$$
Using the branching relations, Proposition \ref{Branching}, and the Cauchy identity (\ref{CauchyS2}) we get
\begin{equation}\label{CID}
\sum_{\pi \in \mathcal{P}_N'}\mathcal{W}^f_{{\bf u}, {\bf w}}(\pi) = (q;q)_N \prod_{i = 1}^N \left( \frac{1}{1 - su_i} \prod_{j = 1}^K \frac{1 - qu_i w_j}{1 - u_iw_j}\right)=:Z^f({\bf u}; {\bf w}).
\end{equation}
In view of the admissability conditions satisfied by ${\bf u}$ and ${\bf w}$, the above sum is in fact absolutely convergent.

We next fix $s = q^{-1/2}$, set $w_i = q^{i-1} \epsilon$ for $i = 1,\dots,J$ and put $v_j = w_{j + J}$ for $j = 1,\dots, M$. Here $\epsilon > 0$ is chosen sufficiently small so that the admissability condition is maintained. One shows that with the above specialization of parameters $f$ becomes a function of $\lambda, {\bf v}, \epsilon$ and $q^J$ and we denote it by $f_\epsilon(\lambda;{\bf v}, q^J)$. Specifically, if $q^J = X$ one has 
\begin{equation}\label{feq-1}
\begin{split}
f_\epsilon(\lambda;{\bf v}, X) = \sum_{ \nu \in \mathsf{Sign}_N^+}  \frac{(q;q)_N(-q)^{n_0 - N}(s\epsilon;q)_{N - n_0}}{(q;q)_{n_0}(s\epsilon;q)_N(s\epsilon^{-1}; q^{-1})_{N - n_0}}
(Xq^{ -N + n_0 + 1};q)_{N-n_0} (s\epsilon X;q)_{n_0} \times \\
\prod_{i = 1}^{\infty}{\bf 1}_{\{ n_i \leq 1 \}}\prod_{j = 1}^{N-n_0}\left( \frac{1}{1 - s\epsilon q^{j-1}} \left( \frac{\epsilon q^{j-1} - s}{ 1 - s\epsilon q^{j-1}}\right)^{\nu_j} \right)\mathsf{G}^c_{\lambda/ \nu}(v_1,\dots,v_M),
\end{split}
\end{equation}
where $\nu = 0^{n_0} 1^{n_1}2^{n_2} \cdots$ and ${\bf 1}_E$ is the indicator function of a set $E$. In addition, specializing our $w$ variables in $Z^f({\bf u}; {\bf w})$ and replacing $q^J$ with $X$, we get that $Z^f({\bf u}; {\bf w})$ becomes
$$(q;q)_N \prod_{i = 1}^N \left( \frac{1}{1 - su_i}\frac{1 - X\epsilon u_i}{1 - \epsilon u_i} \prod_{j = 1}^M \frac{1 - qu_i v_j}{1 - u_i v_j} \right) =:Z^{f_\epsilon}({\bf u}; {\bf v},X) .$$

 We subsitute $X =  (s\epsilon)^{-1}$ and take the limit as $\epsilon \rightarrow 0$. Under this limit transition we have
\begin{equation}\label{fBoundary}
\begin{split}
&f(\lambda; {\bf v}, \rho):= \lim_{\epsilon \rightarrow 0} f_\epsilon(\lambda;{\bf v}, (s\epsilon)^{-1}) = \\
&  (-1)^N(q;q)_N  \hspace{-3mm} \sum_{ \nu \in \mathsf{Sign}_N^+} \hspace{-3mm} {\bf 1}_{\{n_0 = 0\}}\prod_{i = 1}^{\infty}{\bf 1}_{\{ n_i \leq 1 \}}\prod_{j = 1}^{N} \left(-s \right)^{\nu_j} \mathsf{G}^c_{\lambda/ \nu}(v_1,\dots,v_M),
\end{split}
\end{equation}
and 
$$Z^f({\bf u}) :=\lim_{\epsilon \rightarrow 0} Z^{f_\epsilon}({\bf u}; {\bf v},(s\epsilon)^{-1})  =  (q;q)_N \prod_{i = 1}^N \left( \frac{1 - s^{-1} u_i}{1 - su_i} \prod_{j = 1}^M \frac{1 - qu_i v_j}{1 - u_iv_j}\right).$$
The above formulas imply that $f(\lambda; {\bf v}, \rho) = 0$ if $\lambda_N = 0$ or $\lambda_i = \lambda_j$ for $i \neq j$.

Equation (\ref{CID}) can now be analytically extended in $X$ (both sides become polynomials in $X$), and after specializing $X =  (s\epsilon)^{-1}$ and taking the limit as $\epsilon \rightarrow 0^+$ we get
\begin{equation}\label{PFEqn3}
\sum_{\pi \in \mathcal{P}_N'}\prod_{i = 1}^N \prod_{j = 1}^\infty w_{u_i}(\pi(i,j)) \times f(\lambda^N(\pi); {\bf v}, \rho) = Z^f({\bf u}),
\end{equation}
where again the right side can be shown to be absolutely convergent. With $f(\lambda; {\bf v}, \rho)$ as above we define the following weight of a collection of paths in $\mathcal{P}'_N$
\begin{equation}\label{WS2}
\mathcal{W}^f_{{\bf u}, {\bf v}}(\pi) = \prod_{i = 1}^N \prod_{j = 1}^\infty w_{u_i}(\pi(i,j)) \times f(\lambda^N(\pi);{\bf v}, \rho).
\end{equation}
One can check that $\mathcal{W}^f_{{\bf u}, {\bf v}}(\pi)  \geq 0$, $\mathcal{W}^f_{{\bf u}, {\bf v}}(\pi) = 0$ for all $\pi \in \mathcal{P}'_N \setminus \mathcal{P}_N$. 
As weights are non-negative and the partition function $Z^f({\bf u})$ is positive and finite, we see from (\ref{PFEqn3}) that 
$$\mathbb{P}_{{\bf u}, {\bf v}}(\pi) := \frac{\mathcal{W}^f_{{\bf u}, {\bf v}}(\pi)}{Z^f({\bf u})},$$
defines an honest probability measure on $\mathcal{P}_N$. For future reference we summarize the parameter choices we have made in the following definition.

\begin{definition}\label{parameters}
Let $N,M \in \mathbb{N}$. We fix $q \in (0,1)$ and $s = q^{-1/2}$, ${\bf u} = (u_1,\dots,u_N)$ with $u_i > s$ and ${\bf v} = (v_1,\dots,v_M)$ with $v_j > 0$, and $\max_{i,j} u_iv_j < 1$. With these parameters, we denote $\mathbb{P}_{{\bf u}, {\bf v}}$ to be the probability measure on $\mathcal{P}_N$, defined above.
\end{definition}

We end this section with the following result that provides a formula for the finite-dimensional projections of $\mathbb{P}_{{\bf u}, {\bf v}}$.
\begin{lemma}\label{Projections}Let $N, M \in \mathbb{N}$. Fix $q \in (0,1)$ and $s = q^{-1/2}$, ${\bf u} = (u_1,\dots,u_N)$ with $u_i > s$ and ${\bf v} = (v_1,\dots,v_M)$ with $v_j > 0$, and $\max_{i,j} u_iv_j < 1$. With these parameters let $\mathbb{P}_{{\bf u}, {\bf v}}$ be as in Definition \ref{parameters}.  Let us fix $k \in \mathbb{N}$, $1\leq m_1 < m_2 < \cdots < m_k \leq N$ and $\mu^{m_i} \in \mathsf{Sign}_{m_i}^+$. Then
\begin{equation}\label{marg5}
\begin{split}
&\mathbb{P}_{{\bf u, v}} \left( \lambda^{m_1}(\pi) = \mu^{m_1},..., \lambda^{m_k}(\pi) = \mu^{m_k} \right) = \frac{\prod_{r = 0}^{k-1} \mathsf{F}_{\mu^{m_{r + 1}}/  \mu^{m_r}}(u_{m_r + 1},..., u_{m_{r + 1}})    f(\mu^{m_k};{\bf v}, \rho) }{Z^f({\bf u}, {\bf v}; m_k)},\\   &\mbox{ where } Z^f({\bf u}, {\bf v}; m_k) =  (q;q)_{m_k} \prod_{i = 1}^{m_k} \left( \frac{1 - s^{-1} u_i}{1 - su_i} \prod_{j = 1}^M \frac{1 - qu_i v_j}{1 - u_iv_j}\right).
\end{split}
\end{equation}
\end{lemma}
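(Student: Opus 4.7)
The plan is to exploit the fact that $\mathbb{P}_{{\bf u}, {\bf v}}$ arises as an $\epsilon \to 0$ degeneration of an ascending spin Hall--Littlewood-type process in which the boundary weight is literally a $\mathsf{G}^c$ function: I first prove the analogous projection formula at finite $\epsilon > 0$ using the skew Cauchy identity, and then pass to the limit. Throughout, write ${\bf w} = (\epsilon, q\epsilon, \dots, q^{J-1}\epsilon, v_1, \dots, v_M)$ so that $f_\epsilon(\lambda;{\bf v},q^J) = \mathsf{G}^c_\lambda({\bf w})$.

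At finite $\epsilon$, the weight of $\pi \in \mathcal{P}_N'$ factorizes row by row, and summing the row-product over path configurations in rows $m_r+1,\dots,m_{r+1}$ with prescribed bottom profile $\mu^{m_r}$ and top profile $\mu^{m_{r+1}}$ (with $\mu^{m_0}:=\varnothing$) yields $\mathsf{F}_{\mu^{m_{r+1}}/\mu^{m_r}}(u_{m_r+1},\dots,u_{m_{r+1}})$ by Definition \ref{DefF}. Thus the total weight of the constrained $\pi$'s equals $\prod_{r=0}^{k-1}\mathsf{F}_{\mu^{m_{r+1}}/\mu^{m_r}}(u_{m_r+1},\dots,u_{m_{r+1}})$ times
\[
\Sigma_\epsilon := \sum_{\lambda^N \in \mathsf{Sign}_N^+} \mathsf{F}_{\lambda^N/\mu^{m_k}}(u_{m_k+1},\dots,u_N)\, \mathsf{G}^c_{\lambda^N}({\bf w}).
\]
I evaluate $\Sigma_\epsilon$ by applying the skew Cauchy identity (Proposition \ref{PPieri}) with $\lambda \mapsto \vec 0_N$, $\nu \mapsto \mu^{m_k}$, $\kappa \mapsto \lambda^N$; admissibility of all $(u_i,w_j)$ pairs is easily checked for small $\epsilon$. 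The resulting sum over $\eta$ collapses to a single term, since $\mathsf{F}_{\vec 0_N/\eta}(u_{m_k+1},\dots,u_N)=0$ unless $\eta=\vec 0_{m_k}$ (paths are up-right and must all exit at $x=0$), and the unique surviving configuration---vertices of type $(m_k+j-1,1;m_k+j,0)$ at $(0,j)$ for $j=1,\dots,N-m_k$---gives $\mathsf{F}_{\vec 0_N/\vec 0_{m_k}}(u_{m_k+1},\dots,u_N) = \prod_{j=1}^{N-m_k}\frac{1-q^{m_k+j}}{1-su_{m_k+j}} = \frac{(q;q)_N/(q;q)_{m_k}}{\prod_{i=m_k+1}^N(1-su_i)}$. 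Hence
\[
\Sigma_\epsilon = \frac{(q;q)_N/(q;q)_{m_k}}{\prod_{i=m_k+1}^N(1-su_i)}\cdot \prod_{i=m_k+1}^N\prod_{j=1}^K\frac{1-qu_iw_j}{1-u_iw_j}\cdot \mathsf{G}^c_{\mu^{m_k}}({\bf w}).
\]

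Finally, send $\epsilon \to 0$ with $q^J = (s\epsilon)^{-1}$. The telescoping identity $\prod_{j=1}^J \frac{1-u_iq^j\epsilon}{1-u_iq^{j-1}\epsilon} = \frac{1-s^{-1}u_i}{1-u_i\epsilon}$ gives $\prod_{j=1}^K \frac{1-qu_iw_j}{1-u_iw_j} \to (1-s^{-1}u_i)\prod_{j=1}^M \frac{1-qu_iv_j}{1-u_iv_j}$; rerunning the derivation of (\ref{fBoundary}) for length-$m_k$ signatures yields $\mathsf{G}^c_{\mu^{m_k}}({\bf w}) \to f(\mu^{m_k};{\bf v},\rho)$; and $Z^{f_\epsilon}({\bf u};{\bf v},(s\epsilon)^{-1}) \to Z^f({\bf u})$ by the construction. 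Dividing and simplifying, the factor $\frac{(q;q)_N}{(q;q)_{m_k}}\prod_{i=m_k+1}^N \left(\frac{1-s^{-1}u_i}{1-su_i}\prod_{j=1}^M\frac{1-qu_iv_j}{1-u_iv_j}\right)$ in the limiting numerator cancels the matching piece of $Z^f({\bf u})$, leaving exactly $Z^f({\bf u},{\bf v};m_k)^{-1}$ and yielding (\ref{marg5}). The only technical care required is in exchanging the $\epsilon\to 0$ limit with the infinite sums in $\Sigma_\epsilon$ and in the probability on the left; this is handled by the dominated-convergence arguments used to justify the construction of $\mathbb{P}_{{\bf u},{\bf v}}$ and spelled out in \cite[Section 2]{dimitrov2016six}.
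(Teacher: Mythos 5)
The paper itself does not give a proof of Lemma \ref{Projections}; it merely cites \cite[Equation (29) and Section 2.3]{dimitrov2016six}, so there is no in-paper argument to compare against. Your argument supplies a proof and it is correct: factoring the constrained weight sum row-by-row via the branching rule (Proposition \ref{Branching}) into a product of skew $\mathsf{F}$'s times $\Sigma_\epsilon$; collapsing $\Sigma_\epsilon$ with the skew Cauchy identity (Proposition \ref{PPieri}) applied with $\lambda = \vec 0_N$, so that only $\eta = \vec 0_{m_k}$ survives because up-right paths cannot decrease their $x$-coordinate; computing the unique surviving $\mathsf{F}_{\vec 0_N/\vec 0_{m_k}}$ configuration explicitly; and then passing to the $\epsilon\to 0$ degeneration --- this is the standard ascending-process derivation and it dovetails exactly with the construction of $\mathbb{P}_{{\bf u},{\bf v}}$ already carried out in Section \ref{Section2.2}. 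One point worth tightening: the phrase ``send $\epsilon\to 0$ with $q^J = (s\epsilon)^{-1}$'' cannot be read literally, since for $q\in(0,1)$ and $\epsilon\to 0^+$ this would force $J < 0$. The correct sequence (which you do invoke by referring to the derivation of (\ref{fBoundary})) is: establish the finite-$\epsilon$ identity for all integers $J$ large enough that the $(u_i,w_j)$ are admissible, observe that both sides are polynomials in $X = q^J$ and hence agree for all $X$, set $X = (s\epsilon)^{-1}$, and only then let $\epsilon\to 0$. Spelling out that $X$-continuation, and pointing explicitly to the absolute-convergence bounds used in constructing $\mathbb{P}_{{\bf u},{\bf v}}$ when interchanging the $\epsilon\to 0$ limit with the infinite sum over $\pi$, would make the write-up airtight; neither is a gap in the underlying idea.
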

\begin{remark} If $k \leq N$ and $m_i = i$ for $i = 1,\dots, k$ then (\ref{marg5}) implies that the projection of $\mathbb{P}_{{\bf u}, {\bf v}}$ to $D_k$ has law $\mathbb{P}_{{\bf u}_k, {\bf v}}$, where ${\bf u}_k = (u_1, \dots, u_k)$. In particular, the measures $\mathbb{P}_{{\bf u}, {\bf v}}$ are consistent with each other and can be used to define a measure on up-right paths on the entire region $\mathbb{Z}^2_{\geq 0}.$
\end{remark}
\begin{proof} Equation (\ref{marg5}) can be found as \cite[Equation (29)]{dimitrov2016six} and we refer the interested reader to Section 2.3 in that paper for the proof.
\end{proof}

%-------------------------------------------------------------------------------------------------------------------------------------------------------------------------------------------------
% Section 3
%
%-------------------------------------------------------------------------------------------------------------------------------------------------------------------------------------------------
\section{Estimates for $f(\lambda; {\bf v},\rho)$ and $\mathsf{F}_{\lambda}$}\label{Section3} In this section we give a contour integral formula for the functions $f(\lambda; {\bf v},\rho)$, and a combinatorial estimate for the function $\mathsf{F}_{\lambda}$ from Definition \ref{DefF}. The results we derive in this section will be used in Sections \ref{Section4} and \ref{Section5} to prove Theorem \ref{thm:main}. 

%-------------------------------------------------------------------------------------------------------------------------------------------------------------------------------------------------
% Section 3.1
%
%-------------------------------------------------------------------------------------------------------------------------------------------------------------------------------------------------
\subsection{Integral formulas for $f(\lambda; {\bf v},\rho)$} The purpose of this section is to derive a contour integral formula for the function $f(\lambda; {\bf v},\rho)$ from Section \ref{Section2.2}. We accomplish this in Lemma \ref{CIFf} after we derive a contour integral formula for the functions $\mathsf{G}^c_{\lambda}$ from Definition \ref{def:G} in Lemma \ref{CIFg}. In the remainder of the paper we denote by $\iota$ the root $\sqrt{-1}$ that lies in the complex upper half-plane.
\begin{lemma}\label{CIFg} Suppose that $k,N \in \mathbb{N}$ satisfy $N \geq k$, $q \in (0,1)$, $s > 1$ and $v_1, \dots, v_N$ are complex numbers such that $|v_i| < s^{-1}$ for all $i =1, \dots ,N$. Then for any $\lambda \in \mathsf{Sign}_k^+$ with $\lambda_k \geq 1$ we have
\begin{equation}\label{CIFgEq}
\begin{split}
&\mathsf{G}_{\lambda}^c(v_1, \dots, v_N) = c(\lambda) (q;q)_k \cdot \oint_{\gamma} \cdots \oint_{\gamma}\prod_{1 \leq \alpha < \beta \leq k}  \frac{u_{\alpha}-u_{\beta}}{u_{\alpha}-q u_{\beta}} \times\\
& \prod_{i=1}^k\frac{1}{(1-s u_i)(u_i - s)} \left(\f{1-s u_i}{u_i-s}\right)^{\lambda_i} \prod_{i = 1}^k \prod_{j = 1}^N \frac{1-q u_i v_j}{1-u_i v_j} \prod_{i = 1}^k \frac{du_i}{2\pi \iota}.
\end{split}
\end{equation}
In (\ref{CIFgEq}) the constant $c(\lambda)$ is as in (\ref{relation}) and the contour $\gamma$ is a zero-centered positively oriented circle of radius $R \in \left(s, \min_{1, \dots, N} |v_i|^{-1} \right)$, where the latter set is non-empty by our assumption on $v_i$'s.
\end{lemma}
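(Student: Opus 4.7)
The plan is to match both sides of (\ref{CIFgEq}) to a common double sum indexed by pairs $(I, \tau)$ with $I \subset \{1, \dots, N\}$, $|I| = k$, and $\tau : \{1, \dots, k\} \to I$ a bijection. On the LHS, first apply the relation (\ref{relation}) with $\mu = \varnothing$ and $c(\varnothing) = 1$ to cancel the $c(\lambda)$ prefactor, reducing the claim to the analogous identity for $\mathsf{G}_\lambda$. Then apply the symmetrization formula (\ref{symmGEq}) with $n = k$ and $n_0 = 0$ (valid since $\lambda_k \geq 1$), producing an $S_N$-symmetrization. Decompose each $\sigma \in S_N$ as a triple $(I, \tau, \rho)$ where $I = \sigma(\{1, \dots, k\})$, $\tau : \{1, \dots, k\} \to I$ is a bijection, and $\rho$ permutes $[N] \setminus I$. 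The factor depending on both $\tau$- and $\rho$-variables is symmetric in $\rho$, so the sum over $\rho$ evaluates via the classical identity
$$\sum_{\rho \in S_m} \prod_{\alpha < \beta} \frac{u_{\rho(\alpha)} - qu_{\rho(\beta)}}{u_{\rho(\alpha)} - u_{\rho(\beta)}} = \frac{(q;q)_m}{(1-q)^m} \qquad (m = N-k),$$
leaving a clean double sum over $(I, \tau)$.

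On the RHS, evaluate the $k$-fold contour integral by iterated residues. The product contour $\gamma^k$ can first be deformed to a nested configuration $|u_1| > |u_2| > \dots > |u_k|$ with all radii in $(s, \min_j |v_j|^{-1})$; this deformation is permissible because on $\gamma^k$ one has $|u_\alpha - qu_\beta| \geq R(1-q) > 0$, so the ``cross-poles'' coming from factors $(u_\alpha - qu_\beta)^{-1}$ are never crossed during the deformation. Integrating $u_1$ first and expanding its contour outward toward infinity, the only external poles are the simple poles at $u_1 = v_j^{-1}$ (the cross-poles $u_1 = qu_\beta$ lie interior since $|u_\beta| < |u_1|$ in the nested order), and the boundary contribution at infinity vanishes because the integrand decays as $u_1^{-2}$, using $\lambda_1 \geq 1$ to control the $((1-su_1)/(u_1-s))^{\lambda_1}$ factor. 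Iterating with $u_2, u_3, \dots$, the key observation is that apparent cross-poles at $u_\beta = v_{j_\alpha}^{-1}/q$ — generated after $u_\alpha$ has been substituted with $v_{j_\alpha}^{-1}$ — are canceled by the zero of $(1 - qu_\beta v_{j_\alpha})$ coming from the numerator of $\prod_j (1 - qu_\beta v_j)/(1 - u_\beta v_j)$; a short computation gives $(1 - qu_\beta v_{j_\alpha})/(v_{j_\alpha}^{-1} - qu_\beta) = v_{j_\alpha}$ at this point, which is regular. Moreover, the numerator factor $(u_\alpha - u_\beta)$ forces the surviving index tuples $(j_1, \dots, j_k)$ to have distinct entries; regrouping by $I = \{j_1, \dots, j_k\}$ and the bijection $i \mapsto j_i$ yields precisely the double sum from the symmetrization side.

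The main obstacle is this iterative residue computation: at each stage one must verify that the contour at infinity contributes nothing, that every apparent cross-pole is killed by a compensating numerator zero, and that the collection of surviving residue terms assembles into exactly the form dictated by the partially-resummed symmetrization formula. The final identification of normalization constants — the $(q;q)_k$, the $(1-q)^k$, and the $(-1/v_j)$ factors from each residue at $u_i = v_{j_i}^{-1}$ — is a routine bookkeeping check.
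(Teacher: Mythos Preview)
Your approach matches the paper's: evaluate the RHS by iterated residues at the exterior poles $u_i=v_{j_i}^{-1}$, expand the LHS via the symmetrization formula (\ref{symmGEq}), decompose each $\sigma\in S_N$ by its restriction to $\{1,\dots,k\}$, and collapse the residual $S_{N-k}$ sum using the Macdonald identity $\sum_{\rho\in S_m}\prod_{\alpha<\beta}\frac{u_{\rho(\alpha)}-qu_{\rho(\beta)}}{u_{\rho(\alpha)}-u_{\rho(\beta)}}=\frac{(q;q)_m}{(1-q)^m}$. Your extra explanation of why the apparent cross-pole at $u_\beta=v_{j_\alpha}^{-1}/q$ is killed by the numerator zero $1-qu_\beta v_{j_\alpha}$ is exactly what the paper glosses as ``no new poles are introduced.''

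Two small corrections. First, $\mathsf{G}^c_\lambda$ abbreviates $\mathsf{G}^c_{\lambda/0^k}$, not $\mathsf{G}^c_{\lambda/\varnothing}$, and $c(0^k)=(s^2;q)_k/(q;q)_k\neq 1$, so your reduction via (\ref{relation}) does not simply ``cancel the $c(\lambda)$ prefactor''; the paper sidesteps this by applying (\ref{symmGEq}) to $\mathsf{G}^c_\lambda$ directly, where the factor $\prod_{j\geq 1}(s^2;q)_{n_j}/(q;q)_{n_j}$ is exactly $c(\lambda)$ because $n_0=0$. Second, both the residue calculation (simple poles at $v_j^{-1}$) and the symmetrization formula (Vandermonde denominators $v_\alpha-v_\beta$) require the $v_i$ to be pairwise distinct; the paper first proves the identity for distinct $v_i\in(0,s^{-1})$ and then extends to arbitrary $|v_i|<s^{-1}$ by a short analytic continuation argument, which you have omitted.
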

\begin{remark} We mention here that a similar result to the above lemma was proved in \cite[Corollary 7.16]{borodin2018higher} with several important differences. First of all, the formula in  \cite[Corollary 7.16]{borodin2018higher} is for the functions $\mathsf{G}_\lambda$ rather than $\mathsf{G}_{\lambda}^c$, but in view of (\ref{relation}) this difference is inessential. A more significant difference is that the authors in that paper assumed that $s \in (-1, 0)$ unlike our case of $s > 1$ -- this difference is also minor and can be overcome by an analytic continuation argument in the parameter $s$. A crucial difference is that the contour integral formula in \cite[Corollary 7.16]{borodin2018higher} is based on small contours that encircle $s$ while the contours in Lemma \ref{CIFg} above are large contours. In particular, the formulas in Lemma \ref{CIFg} are different and cannot be obtained by a direct application of Cauchy's theorem from the ones in \cite[Corollary 7.16]{borodin2018higher}. That being said, we mention that the existence of both small and large contour formulas is known in the related context of Macdonald processes, see \cite[Section 3.2.3]{borodin2014macdonald} and the derivation of both types of formulas is similar in spirit.
\end{remark}
\begin{proof} The proof is a standard computation of residues for the integrals on the right side of (\ref{CIFgEq}), but for clarity we split the proof into two steps. \\

{\bf \raggedleft Step 1.} We claim that (\ref{CIFgEq}) holds when $v_1, \dots, v_N \in (0, s^{-1})$ are such that $v_i \neq v_j$ for $i \neq j$.  We prove this statement in the second step. In the present step we assume its validity and conclude the proof of the lemma.\\

Let $\Omega$ denote the open disc of radius $s^{-1}$, centered at the origin in $\mathbb{C}$. Observe that by Definition \ref{def:G} and (\ref{eq:weightsconjugate}) we know that $\mathsf{G}_{\lambda}^c(v_1, \dots, v_N) $ is a finite sum of rational functions in $v_1, \dots, v_N$ that are analytic in $\Omega^N$ (here we used that the possible poles of $\mathsf{G}_{\lambda}^c$ come from the zeros of the denominators of $w_v(i_1, j_1; i_2, j_2)$ which are all located at $v = s^{-1}$). This means that for each $i = 1, \dots, N$ and $v_j \in \Omega$ for $j \neq i$ the left side of (\ref{CIFgEq}) as a function of $v_i$ is analytic in $\Omega$. Since $\gamma$ has radius bigger than $s$ by assumption, we see that for each $i = 1, \dots, N$ and $v_j \in \Omega$ for $j \neq i$  the integrand on the right side of (\ref{CIFgEq}) is also analytic on $\Omega$ as a function of $v_i$. It follows by \cite[Theorem 5.4]{stein2010complex} that for each $i = 1, \dots, N$ and $v_j \in \Omega$ for $j \neq i$ the right side of (\ref{CIFgEq}) is analytic on $\Omega$ as a function of $v_i$. 

We claim for each $k = 0, \dots, N$ that (\ref{CIFgEq}) holds if $v_1, \dots, v_{N-k} \in (0, s^{-1})$ are such that $v_i \neq v_j$ for $1 \leq i \neq j \leq N-k$ and $v_{N-k+1}, \dots v_N \in \Omega$. We prove this statement by induction on $k$ with base case $k= 0$ being true by our claim in the beginning of the step. Let us assume this result for $k$ and prove it for $k+1$. We fix $v_{N-k +1}, \dots, v_N \in \Omega$ and $v_1, \dots, v_{N- k - 1}$ in $(0,s^{-1})$ with $v_i \neq v_j$ for $1 \leq i \leq j \leq N-k - 1$. Put $m = \max(v_1, \dots, v_{N-k-1})$ and observe that by our discussion in the previous paragraph both sides of (\ref{CIFgEq}) are analytic functions of $v_{N-k}$ in $\Omega$ and by induction hypothesis these two functions are equal when $v_{N-k} \in (m, s^{-1})$. Since the latter set is contained in $\Omega$ and has a limit point in that set we conclude by \cite[Corollary 4.9]{stein2010complex} that both sides of (\ref{CIFgEq}) agree for all $v_{N-k} \in \Omega$. This proves the desired result for $k+1$ and we conclude by induction that the result holds when $k = N$, which is precisely the statement of the lemma.\\

{\bf \raggedleft Step 2.} In this step we prove that (\ref{CIFgEq}) holds when $v_1, \dots, v_N \in (0, s^{-1})$ are such that $v_i \neq v_j$ for $i \neq j$. We proceed to sequentially compute the integral with respect to $u_i$ for $i = 1,2, \dots, k$ in this order as a sum of residues. Observe that after we have evaluated the (minus) residues outside of $\gamma$ for $u_j$ with $j = 1, \dots, i-1$ the integrand only has simple poles when $u_i  = v_{m_i}^{-1}$ (there are no poles at infinity since the integrand is $\sim u_i^2$ as $|u_i| \rightarrow \infty$, and also no new poles are introduced after evaluating the residues at $u_j = v_{m_j}^{-1}$ for $j = 1, \dots, i-1$). Furthermore, the Vandermonde determinant $\prod_{1 \leq \alpha < \beta \leq k} (u_{\alpha}-u_{\beta})$ in the integrand implies that we only get a non-trivial contribution from the residues when $m_1, \dots, m_k$ are all distinct. Putting this all together, we conclude by the Residue theorem that the right side of (\ref{CIFgEq}) is equal to
\begin{equation*}
\begin{split}
&  c(\lambda) (q;q)_k  \sum_{I} \prod_{1 \leq \alpha < \beta \leq k}  \frac{v^{-1}_{m_\alpha}-v^{-1}_{m_\beta}}{v^{-1}_{m_\alpha}-q v^{-1}_{m_\beta}} \prod_{i=1}^k\frac{1}{(1-s v_{m_i}^{-1})(v_{m_i}^{-1} - s)} \left(\f{1-s v^{-1}_{m_i}}{v^{-1}_{m_i}-s}\right)^{\lambda_i} \times \\
&\frac{\prod_{i = 1}^k \prod_{j = 1}^N (v_j^{-1}-q v_{m_i}^{-1})}{\prod_{i = 1}^k \prod_{j = 1, j \neq m_i}^N(v_{j}^{-1}- v_{m_i}^{-1} )},
\end{split}
\end{equation*}
where the sum is over injective functions $I : \{1, \dots, k\} \rightarrow \{1, \dots, N\}$ and we have denoted $I(r) = m_r$. Performing some simplifications and rearrangements we conclude that 
\begin{equation}\label{A1}
\begin{split}
&  c(\lambda) (q;q)_k (1-q)^k \sum_{I} \prod_{1 \leq \alpha < \beta \leq k}  \frac{v_{m_\alpha}-q v_{m_\beta}}{v_{m_\alpha}-v_{m_\beta}} \prod_{i=1}^k\frac{v_{m_i}}{(v_{m_i}-s)(1- sv_{m_i})} \left(\f{v_{m_i}-s }{1-s v_{m_i}}\right)^{\lambda_i} \times \\
& \prod_{i = 1}^k \prod_{j \in J} \frac{(v_{m_i}-q v_{j})}{ (v_{m_i}- v_{j} )} = \mbox{RHS of (\ref{CIFgEq})},
\end{split}
\end{equation}
where $J = \{1, \dots, N\} \setminus \{m_1, \dots, m_k\}$. 

On the other hand, by (\ref{symmGEq}), we have that the left side of (\ref{CIFgEq}) is equal to
\begin{equation*}
\begin{split}
 &\frac{c(\lambda) (1-q)^N(q;q)_k    }{(q;q)_{N- k }}\sum_{\sigma \in S_N} 
 \sigma \left( \prod_{1 \leq \alpha < \beta \leq N} \frac{v_{\alpha} - q v_\beta}{v_\alpha - v_\beta} \prod_{i = 1}^k  \frac{v_i}{(v_i - s)(1- sv_i)} \left( \frac{ v_i - s}{1 - sv_i}\right)^{\lambda_i}\right),
\end{split}
\end{equation*}
where $\lambda = 0^{n_0} 1^{n_1}2^{n_2} \cdots.$ We next split the latter sum over the possible values of $\sigma(1), \dots, \sigma(k)$ and rewrite the above as
\begin{equation}\label{A2}
\begin{split}
 &\frac{c(\lambda) (1-q)^N(q;q)_k    }{(q;q)_{N- k }}\sum_{I} 
\prod_{1 \leq \alpha < \beta \leq k} \frac{v_{m_\alpha} - q v_{m_\beta}}{v_{m_\alpha} - v_{m_\beta}} \cdot \prod_{i = 1}^k \prod_{j \in J} \frac{(v_{m_i}-q v_{j})}{ (v_{m_i}- v_{j} )} \\
& \prod_{i = 1}^k \frac{v_{m_i}}{(v_{m_i}- s)(1- sv_{m_i})}  \left( \frac{ v_{m_i} - s}{1 - sv_{m_i}}\right)^{\lambda_i}\cdot \sum_{\tau \in S_{N-k}} \tau \left(\prod_{1 \leq \alpha < \beta \leq N-k} \frac{v_{j_{\tau(\alpha)}} - q v_{j_{\tau(\beta)}}}{v_{j_{\tau(\alpha)}} -  v_{j_{\tau(\beta)}}}  \right),
\end{split}
\end{equation}
where as before the sum is over injective maps $I: \{1, \dots, k\} \rightarrow \{1, \dots, N\}$, $m_r = I(r)$ for $r =1,\dots, k$ and $J = \{1, \dots, N \} \setminus \{m_1, \dots, m_k\}$. The inner sum is over pemutations $\tau$ of $\{1, \dots ,N-k\}$ and $j_1, \dots, j_{N-k}$ denote the elements of $J$ in increasing order (the particular order does not matter). We know from \cite[Chapter III, (1.4)]{Mac} that 
$$\sum_{\tau \in S_{N-k}} \tau \left(\prod_{1 \leq \alpha < \beta \leq N-k} \frac{v_{j_{\tau(\alpha)}} - q v_{j_{\tau(\beta)}}}{v_{j_{\tau(\alpha)}} -  v_{j_{\tau(\beta)}}}  \right) = \frac{(q;q)_{N-k}}{(1-q)^{N-k}}.$$
Substituting this in (\ref{A2}) we conclude that 
\begin{equation*}
\begin{split}
 &c(\lambda) (q;q)_k (1-q)^k  \sum_{I} 
\prod_{1 \leq \alpha < \beta \leq k} \frac{v_{m_\alpha} - q v_{m_\beta}}{v_{m_\alpha} - v_{m_\beta}} \cdot \prod_{i = 1}^k \prod_{j \in J} \frac{(v_{m_i}-q v_{j})}{ (v_{m_i}- v_{j} )} \\
& \prod_{i = 1}^k  \frac{v_{m_i}}{(v_{m_i}- s)(1- sv_{m_i})} \left( \frac{ v_{m_i} - s}{1 - sv_{m_i}}\right)^{\lambda_i} = \mbox{ LHS of (\ref{CIFgEq})}.
\end{split}
\end{equation*}
Comparing the last equation with (\ref{A1}) we conclude that the left and right sides of (\ref{CIFgEq}) agree when $v_1, \dots, v_N \in (0, s^{-1})$ are such that $v_i \neq v_j$ for $i \neq j$. This suffices for the proof.
\end{proof}

The next lemma provides a contour integral formula for $f(\lambda; {\bf v},\rho)$ from (\ref{fBoundary}).

\begin{lemma}\label{CIFf} Suppose that $k, M \in \mathbb{N}$, $q \in (0,1)$, $s = q^{-1/2}$ and $v_1, \dots, v_M \in (0, s^{-1})$. Then for any $\lambda \in \mathsf{Sign}_k^+$ with $\lambda_k \geq 1$ we have
\begin{equation}\label{CIFfEq}
\begin{split}
&f(\lambda; {\bf v},\rho)= c(\lambda) (q;q)_k \cdot \oint_{\gamma} \cdots \oint_{\gamma}\prod_{1 \leq \alpha < \beta \leq k}  \frac{u_{\alpha}-u_{\beta}}{u_{\alpha}-q u_{\beta}} \times\\
& \prod_{i=1}^k\frac{1}{-s(1-s u_i)} \left(\f{1-s u_i}{u_i-s}\right)^{\lambda_i} \prod_{i = 1}^k \prod_{j = 1}^M \frac{1-q u_i v_j}{1-u_i v_j} \prod_{i = 1}^k \frac{du_i}{2\pi \iota}.
\end{split}
\end{equation}
In (\ref{CIFfEq}) the constant $c(\lambda)$ is as in (\ref{relation}) and the contour $\gamma$ is a zero-centered positively oriented circle of radius $R \in \left(s, \min_{1, \dots, M} |v_i|^{-1} \right)$, where the latter set is non-empty by our assumption on $v_i$'s.
\end{lemma}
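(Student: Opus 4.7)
The plan is to deduce (\ref{CIFfEq}) from Lemma \ref{CIFg} by applying the latter to an enlarged set of spectral variables and then tracking through the same specialization and limit procedure by which $f(\lambda;\mathbf{v},\rho)$ was constructed in Section \ref{Section2.2}. Concretely, fix a positive integer $J$ and a small $\epsilon>0$ and set $w_i=q^{i-1}\epsilon$ for $i=1,\dots,J$. By the branching rule (Proposition \ref{Branching}) together with the geometric-progression evaluation in Proposition \ref{qGeom1}, one recognizes that the sum representation (\ref{feq-1}) equals $\mathsf{G}^c_\lambda(w_1,\dots,w_J,v_1,\dots,v_M)$ with $X=q^J$. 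The analytic continuation in $X$ that underlies the definition (\ref{fBoundary}) of $f(\lambda;\mathbf{v},\rho)$ is then available at the level of this $\mathsf{G}^c$-function.

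Next, I would apply Lemma \ref{CIFg} to $\mathsf{G}^c_\lambda(w_1,\dots,w_J,v_1,\dots,v_M)$. For $\epsilon$ sufficiently small we have $|w_j|<s^{-1}$, and the inverse points $w_j^{-1}$ lie far outside the circle $\gamma$ of radius $R\in(s,\min_j|v_j|^{-1})$, so the hypotheses of Lemma \ref{CIFg} are met and the same contour $\gamma$ from Lemma \ref{CIFf} may be used. The key simplification is the telescoping
\begin{equation*}
\prod_{j=1}^J\frac{1-qu_iw_j}{1-u_iw_j}=\frac{1-q^Ju_i\epsilon}{1-u_i\epsilon},
\end{equation*}
which shows that the integrand obtained from Lemma \ref{CIFg} depends on $J$ only through $X=q^J$. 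Therefore the substitution $X=(s\epsilon)^{-1}$ is unambiguous and produces the factor $(1-s^{-1}u_i)/(1-u_i\epsilon)$ in the integrand, giving a contour integral representation for $f_\epsilon(\lambda;\mathbf{v},(s\epsilon)^{-1})$.

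Finally I would pass to the limit $\epsilon\to0$. By (\ref{fBoundary}) the left-hand side of the resulting identity tends to $f(\lambda;\mathbf{v},\rho)$. On the right-hand side, the denominators $1-u_i\epsilon$ stay uniformly bounded away from $0$ on the compact product contour $\gamma^k$, so the integrand converges uniformly and limit and integral exchange, with the factor $(1-s^{-1}u_i)/(1-u_i\epsilon)$ becoming $1-s^{-1}u_i$. Using the algebraic identity $1-s^{-1}u_i=-(u_i-s)/s$ to cancel the $(u_i-s)$ factor appearing in the denominator in (\ref{CIFgEq}) turns $\frac{1}{(1-su_i)(u_i-s)}\cdot(1-s^{-1}u_i)$ into $\frac{-1}{s(1-su_i)}$, which is exactly the pre-factor in (\ref{CIFfEq}).

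The argument involves no genuinely hard step: the analytic content lies in Lemma \ref{CIFg}, and everything after that is telescoping, one substitution, and a routine uniform-convergence limit. The only mild subtlety to watch is that the substitution $q^J=(s\epsilon)^{-1}$ is an analytic continuation rather than a literal assignment, which is why it is important to first telescope the $w_j$-product so that the integrand depends on $J$ only through the single variable $X$.
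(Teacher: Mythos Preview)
Your proposal is correct and follows essentially the same route as the paper: apply Lemma \ref{CIFg} with the enlarged variable set $(q^0\epsilon,\dots,q^{J-1}\epsilon,v_1,\dots,v_M)$, telescope the product to $\frac{1-q^J u_i\epsilon}{1-u_i\epsilon}$, analytically continue in $X=q^J$ (the paper phrases this as both sides being degree-$k$ polynomials in $X$ agreeing at infinitely many points), set $X=(s\epsilon)^{-1}$, and pass to the limit $\epsilon\to 0$ under the integral. The final algebraic simplification is identical.
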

\begin{proof} We start from (\ref{CIFgEq}) with $N = M + J$, where $J \in \mathbb{N}$ and variables $w_1, \dots, w_N$ in place of $v_1, \dots, v_N$. We then set $w_{i} =  q^{i-1}\epsilon $ for $i = 1, \dots, J$ and $w_{J +i} = v_i$ for $i = 1, \dots, M$. Here $\epsilon \in (0, s^{-1})$. This gives
\begin{equation*}
\begin{split}
&\mathsf{G}_{\lambda}^c(\epsilon, q \epsilon, \cdots, q^{J-1} \epsilon, v_1, \dots, v_M ) = c(\lambda) (q;q)_k \cdot \oint_{\gamma} \cdots \oint_{\gamma}\prod_{1 \leq \alpha < \beta \leq k}  \frac{u_{\alpha}-u_{\beta}}{u_{\alpha}-q u_{\beta}} \times\\
& \prod_{i=1}^k\frac{1}{(1-s u_i)(u_i - s)} \left(\f{1-s u_i}{u_i-s}\right)^{\lambda_i} \prod_{i = 1}^k \prod_{j = 1}^M \frac{1-q u_i v_j}{1-u_i v_j} \cdot \prod_{i = 1}^k \frac{1-q^J \epsilon u_i }{1-u_i \epsilon }  \prod_{i = 1}^k \frac{du_i}{2\pi \iota}.
\end{split}
\end{equation*}
In particular, we see that if $f_\epsilon(\lambda;{\bf v}, X) $ is as in (\ref{feq-1}) we have
\begin{equation}\label{B1}
\begin{split}
&f_\epsilon(\lambda;{\bf v}, X) = c(\lambda) (q;q)_k \cdot \oint_{\gamma} \cdots \oint_{\gamma}\prod_{1 \leq \alpha < \beta \leq k}  \frac{u_{\alpha}-u_{\beta}}{u_{\alpha}-q u_{\beta}} \times\\
& \prod_{i=1}^k\frac{1}{(1-s u_i)(u_i - s)} \left(\f{1-s u_i}{u_i-s}\right)^{\lambda_i} \prod_{i = 1}^k \prod_{j = 1}^M \frac{1-q u_i v_j}{1-u_i v_j} \cdot \prod_{i = 1}^k \frac{1-X\epsilon u_i }{1-u_i \epsilon }  \prod_{i = 1}^k \frac{du_i}{2\pi \iota},
\end{split}
\end{equation}
whenever $X = q^J$ for any $J \geq 1$. In view of (\ref{feq-1}) we see that both sides of (\ref{B1}) are degree $k$ polynomials in $X$ and since they agree for infinitely many points $X = q^J, J \geq 1$ they must agree for all $X$. Then if we set $X = (s\epsilon)^{-1}$ and let $\epsilon \rightarrow 0$ we conclude in view of (\ref{fBoundary}) that 
\begin{equation*}
\begin{split}
&f(\lambda;{\bf v}, \rho) = \lim_{\epsilon \rightarrow 0} c(\lambda) (q;q)_k \cdot \oint_{\gamma} \cdots \oint_{\gamma}\prod_{1 \leq \alpha < \beta \leq k}  \frac{u_{\alpha}-u_{\beta}}{u_{\alpha}-q u_{\beta}} \times\\
& \prod_{i=1}^k\frac{1}{(1-s u_i)(u_i - s)} \left(\f{1-s u_i}{u_i-s}\right)^{\lambda_i} \prod_{i = 1}^k \prod_{j = 1}^M \frac{1-q u_i v_j}{1-u_i v_j} \cdot \prod_{i = 1}^k \frac{1-s^{-1} u_i }{1-u_i \epsilon }  \prod_{i = 1}^k \frac{du_i}{2\pi \iota},
\end{split}
\end{equation*}
which clearly implies (\ref{CIFfEq}) by the bounded convergence theorem.
\end{proof}

%-------------------------------------------------------------------------------------------------------------------------------------------------------------------------------------------------
% Section 3.2
%
%-------------------------------------------------------------------------------------------------------------------------------------------------------------------------------------------------
\subsection{Combinatorial estimates for $\mathsf{F}_\lambda$} We continue to use the notation from Section \ref{Section2}. In this section we estimate the function $\mathsf{F}_{\lambda}$ when $q \in (0,1)$, $s = q^{-1/2}$, $\lambda \in \mathsf{Sign}_k^+$ and $u_1, \dots, u_k$ are all equal to the same parameter $u > s$. We denote this function by $\mathsf{F}_{\lambda}([u]^k)$. For $\lambda \in \mathsf{Sign}_k^+$ we denote $|\lambda| = \lambda_1 + \cdots + \lambda_k$. The purpose of this section is to establish the following result.

\begin{lemma} \label{FBound} Fix $k \in \mathbb{N}$, $q \in (0,1)$, $s = q^{-1/2}$ and $u > s$. Then there exists a constant $C > 0$ depending on $k, q, u$ such that for all $\lambda \in \mathsf{Sign}_k^+$ with $\lambda_1 > \lambda_2 > \cdots > \lambda_k$ we have
\begin{equation}\label{FSqueeze}
\begin{split}
&\prod_{1 \leq i < j \leq k} \frac{\lambda_i-\lambda_j + j - i}{j-i}  - C \cdot (\lambda_1 -\lambda_k + k)^{\binom{k}{2} - 1} \leq \mathsf{F}_\lambda([u]^k)  \left( \frac{1-q}{1-su} \right)^{-\binom{k+1}{2}} \times \\
& \left( \frac{(1-q^{-1})u}{1-su}\right)^{-\binom{k}{2}}   \left( \frac{u-s}{1-s u} \right)^{-|\lambda|+\binom{k}{2}} \leq \prod_{1 \leq i < j \leq k} \frac{\lambda_i-\lambda_j+j - i}{j-i} + C \cdot(\lambda_1 -\lambda_k + k)^{\binom{k}{2} - 1}.
\end{split}
\end{equation}
\end{lemma}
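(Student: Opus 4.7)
My plan is to apply the symmetrization formula (Proposition~\ref{symmF}(1)) at the degenerate specialization $u_1 = \cdots = u_k = u$, invoking the continuous extension stated in the Remark after Proposition~\ref{symmF}. Setting $u_i = u + \epsilon_i$, $y(u) = (u-s)/(1-su)$, and $\Delta(\epsilon) = \prod_{i<j}(\epsilon_i - \epsilon_j)$, and pulling out the common denominator, the symmetrization gives
\[
\mathsf{F}_\lambda(u+\epsilon_1,\ldots,u+\epsilon_k) = \frac{(1-q)^k}{\prod_i(1-s(u+\epsilon_i))}\cdot \frac{\tilde A_\lambda(\epsilon)}{\Delta(\epsilon)},
\]
where $\tilde A_\lambda(\epsilon) = \sum_{\sigma\in S_k} \text{sgn}(\sigma) \prod_{\alpha<\beta}\bigl(u(1-q)+\epsilon_{\sigma(\alpha)}-q\epsilon_{\sigma(\beta)}\bigr)\prod_i y(u+\epsilon_{\sigma(i)})^{\lambda_i}$ is antisymmetric in $(\epsilon_1,\ldots,\epsilon_k)$. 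Passing to $\epsilon \to 0$ produces $\mathsf{F}_\lambda([u]^k) = \frac{(1-q)^k}{(1-su)^k}\,y(u)^{|\lambda|}\,R(\lambda)$ for a function $R(\lambda)$ analyzed below.

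The proof hinges on two structural observations. With $g(\epsilon):=y(u+\epsilon)/y(u) = 1+a_1\epsilon+a_2\epsilon^2+\cdots$, where $a_1 = y'(u)/y(u) = (1-s^2)/\bigl((u-s)(1-su)\bigr) \ne 0$ (since $s = q^{-1/2}$), the binomial expansion $g(\epsilon)^\lambda = \sum_r Q_r(\lambda)\epsilon^r$ has coefficients $Q_r$ polynomial in $\lambda$ of degree $r$ with leading coefficient $a_1^r/r!$. Antisymmetry of $\tilde A_\lambda/y^{|\lambda|}$ in $\epsilon$ forces its lowest-order $\epsilon$-term to be proportional to $\Delta(\epsilon)$ and of total $\epsilon$-degree $\binom{k}{2}$, so $R(\lambda)$ is a polynomial in $\lambda$ of degree $\binom{k}{2}$. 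Second, $R$ is \emph{translation-invariant}, $R(\lambda + c(1,\ldots,1)) = R(\lambda)$, because $\prod_i g(\epsilon_{\sigma(i)})^c = \prod_i g(\epsilon_i)^c$ is $\sigma$-independent and approaches $1$ as $\epsilon\to 0$. Consequently $R$ is a polynomial of degree $\binom{k}{2}$ in the differences $\lambda_i - \lambda_j$, with coefficients bounded uniformly in $\lambda$ by constants depending only on $k, q, u$.

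To identify the top-degree part of $R$, I would take the constant term $[u(1-q)]^{\binom{k}{2}}$ of the $(1-q)$-product and extract the $\Delta(\epsilon)$-coefficient of $\sum_\sigma \text{sgn}(\sigma)\prod_i g(\epsilon_{\sigma(i)})^{\lambda_i}$; this coefficient equals $\pm[u(1-q)]^{\binom{k}{2}}\det(Q_{j-1}(\lambda_i))_{i,j=1}^k$. Column reduction, together with the leading coefficient $a_1^{j-1}/(j-1)!$ of $Q_{j-1}$, yields $\det(Q_{j-1}(\lambda_i)) = \bigl(a_1^{\binom{k}{2}}/\prod_{j=0}^{k-1}j!\bigr)\det(\lambda_i^{j-1})$. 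Combining with the Vandermonde identity, the relation $\prod_{j=0}^{k-1}j! = \prod_{i<j}(j-i)$, and the explicit value $a_1 = -(1-q)/\bigl(q(u-s)(1-su)\bigr)$, a straightforward bookkeeping of constants shows that the top-degree part of $R(\lambda)$, after dividing by the normalization appearing in \eqref{FSqueeze}, is exactly $\prod_{i<j}(\lambda_i - \lambda_j)/\prod_{i<j}(j-i)$, which matches the top-degree part of the claimed $\prod_{i<j}(\lambda_i-\lambda_j+j-i)/(j-i)$.

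Since both $R(\lambda)/(\text{normalization})$ and the claimed polynomial $\prod_{i<j}(\lambda_i-\lambda_j+j-i)/(j-i)$ are polynomials of degree $\binom{k}{2}$ in the differences $\lambda_i-\lambda_j$ with identical top-degree parts, their difference is of degree at most $\binom{k}{2}-1$ in these differences, with coefficients bounded in terms of $k, q, u$. As $|\lambda_i - \lambda_j| \le \lambda_1 - \lambda_k$, this difference is bounded by $C(k,q,u)\cdot(\lambda_1-\lambda_k+k)^{\binom{k}{2}-1}$ (the $+k$ absorbing the small-case constants), yielding both inequalities in \eqref{FSqueeze}. The main obstacle is the translation-invariance observation in the second paragraph, which is what permits the error bound to depend on $\lambda_1 - \lambda_k$ rather than $\max_i|\lambda_i|$; the remaining constant bookkeeping and column reduction are tedious but routine.
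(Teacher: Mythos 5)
Your approach is genuinely different from the paper's. The paper proves Lemma~\ref{FBound} combinatorially: from Definition~\ref{DefF} it views $\mathsf{F}_\lambda([u]^k)$ as a sum of weights over path collections, bounds an arbitrary weight by a constant times $\left(\frac{u-s}{su-1}\right)^{|\lambda|}$ (Lemma~\ref{lem:weightbound}), computes the common weight of the ``typical'' configurations exactly (Lemma~\ref{lem:typicalweight}), and then imports the enumerations $|\mathcal{P}_{\lambda/\varnothing}| = \prod_{i<j}\frac{\lambda_i-\lambda_j+j-i}{j-i}$ and a matching lower bound on $|\mathcal{P}^{typ}_{\lambda/\varnothing}|$ from \cite{dimitrov2016six} to squeeze the sum. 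You instead degenerate the symmetrization formula of Proposition~\ref{symmF}(1) at $u_i = u + \epsilon_i$, divide out the Vandermonde $\Delta(\epsilon)$ using antisymmetry, and show that the resulting normalized quantity $R(\lambda)$ is a translation-invariant polynomial of degree $\binom{k}{2}$ in the differences $\lambda_i - \lambda_j$ whose top-degree part you compute by column-reducing a $\det[Q_{j-1}(\lambda_i)]$. Your argument is correct and the key ideas are right: the $Q_r$ are degree-$r$ polynomials in $\lambda$ because $g^\lambda = \exp(\lambda\log g)$; contributions where the $(u(1-q)+\epsilon_{\sigma(\alpha)} - q\epsilon_{\sigma(\beta)})$-product absorbs positive $\epsilon$-degree carry strictly smaller $\lambda$-degree and hence fall into the error; and you correctly identify translation-invariance as the nontrivial ingredient that converts an \emph{a priori} bound in terms of $\max_i|\lambda_i|$ into one in terms of $\lambda_1-\lambda_k$. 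What you buy is self-containment --- the paper's proof quotes the two path-collection counts from the earlier paper as black boxes, while your argument derives everything from the symmetrization formula already stated here; what you pay is the algebraic bookkeeping (including the final sign check that you wave at), which the paper's bijective counting avoids entirely. Both routes ultimately rest on the same polynomial structure, and both are valid.
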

We give the proof of Lemma \ref{FBound} in the end of the section. The general idea of the proof is as follows. From Definition \ref{DefF} the function $\mathsf{F}_{\lambda}([u]^k)$ is equal to a sum of weights $\mathcal{W}(\pi)$. For the majority of path collections $\pi$, which we call typical -- see Definition \ref{DefTypical} below, we have that the weight $\mathcal{W}(\pi)$ is equal to 
$$\mathcal{W}_{typ} = \left( \frac{1-q}{1-su} \right)^{\binom{k+1}{2}} \left( \frac{(1-q^{-1})u}{1-su}\right)^{\binom{k}{2}}  \cdot \left( \frac{u-s}{1-s u} \right)^{|\lambda|-\binom{k}{2}}.$$
We prove this in Lemma \ref{lem:typicalweight} below. We show that the weights $\mathcal{W}(\pi)$ for all path collections $\pi$ are within a constant multiple of the above weight -- we do this in Lemma \ref{lem:weightbound} below. Combining these two statements one deduces that $\mathsf{F}_\lambda([u]^k)  \approx \mathcal{W}_{typ} \times K$ where $K$ is the number of typical path collections. By a counting argument one can show that $K \approx \prod_{1 \leq i < j \leq k} \frac{\lambda_i-\lambda_j + j - i}{j-i}.$ Combining these three statements one obtains Lemma \ref{FBound}. We now turn to filling in the details of the above outline.

\begin{lemma} \label{lem:weightbound}
Fix $k \in \mathbb{N}$, $q \in (0,1)$, $s = q^{-1/2}$ and $u > s$. Let $\lambda \in \mathsf{Sign}_k^+$ and $\pi \in \mathcal{P}_{\lambda /\varnothing}$. Then there is a constant $\tilde{C}$ that depends on $k, q, u$ such that 
\begin{equation}  \label{eq:weightbound1}
\left| \mathcal{W}(\pi) \right| \leq \tilde{C} \left( \frac{u-s}{su - 1} \right)^{|\lambda|},
\end{equation}
where $|\lambda| = \lambda_1 + \cdots+ \lambda_k$ and $\mathcal{W}(\pi)$ is as in (\ref{defWeights}) for $u_1 = \cdots =u_k = u$. 
\end{lemma}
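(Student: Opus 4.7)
The plan is to bound $|\mathcal{W}(\pi)|$ by expanding it as a product over vertices of $D_k$, exploiting the observation that only one specific kind of vertex can occur $|\lambda|$-many times, while every other kind occurs at most $O(k^2)$-many times. First, classify each vertex of $\pi$ according to the four families in (\ref{eq:weights}): type 1 $(g,0;g,0)$, type 2 $(g+1,0;g,1)$, type 3 $(g,1;g,1)$, and type 4 $(g,1;g+1,0)$, and denote their totals by $N_1,N_2,N_3,N_4$. A direct inspection of (\ref{eq:weights}) together with $s>1$, $u>s$, and $0<q<1$ produces a constant $K=K(q,u)$ with
\[
|w_u(g,0;g,0)|,\ |w_u(g+1,0;g,1)|,\ |w_u(g,1;g+1,0)| \le K
\]
uniformly in $g\ge 0$, while for type 3 vertices the exact formula
\[
|w_u(g,1;g,1)| \;=\; \tfrac{u-sq^g}{su-1} \;\le\; \tfrac{u}{u-s}\cdot\tfrac{u-s}{su-1}
\]
holds, with equality $|w_u(0,1;0,1)|=\tfrac{u-s}{su-1}$ at $g=0$. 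Only this last family produces the $\tfrac{u-s}{su-1}$ factor appearing in the target bound.

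The key counting lemma I would then prove says that, letting $y_i=i$ be the entry height of the $i$-th path, the total number of vertical-input incidences at vertices of $D_k$ equals
\[
\sum_{v\in D_k} (\text{vertical inputs at } v) \;=\; \sum_{i=1}^{k}(k-y_i) \;=\; \binom{k}{2},
\]
because each path of starting height $y_i$ traverses exactly $k-y_i$ vertical edges terminating at an interior vertex. Since every type 2 vertex has at least one vertical input, this forces $N_2\le \binom{k}{2}$; the same reasoning shows that the number of type 1, 3, or 4 vertices with $g\ge 1$ is also at most $\binom{k}{2}$. Counting horizontal edges separately gives $N_2+N_3=|\lambda|$ (horizontal outputs) and $N_3+N_4=|\lambda|+k$ (horizontal inputs, counting the $k$ edges entering from the left boundary), whence $N_4=N_2+k\le \binom{k+1}{2}$.

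Combining the two ingredients, type 1 vertices with $g=0$ have weight $1$ and can be discarded. All remaining type 1 vertices, together with all type 2 and type 4 vertices, have weight at most $K$ and a total count at most $2\binom{k}{2}+\binom{k+1}{2}$, so their product contributes at most $K^{O(k^2)}$. The product over all type 3 vertices is at most $\big(\tfrac{u}{u-s}\big)^{\binom{k}{2}}\cdot\big(\tfrac{u-s}{su-1}\big)^{N_3}$, where the first factor absorbs the $\binom{k}{2}$-bounded number of type 3 vertices with $g\ge 1$; since $N_3\ge |\lambda|-\binom{k}{2}$ and $\tfrac{u-s}{su-1}\in(0,1)$, this is in turn bounded by a constant (depending on $k$, $q$, $u$) times $\big(\tfrac{u-s}{su-1}\big)^{|\lambda|}$. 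Multiplying the two estimates yields (\ref{eq:weightbound1}). The main conceptual point, and the part requiring the most care, is matching the powers of $\tfrac{u-s}{su-1}$: the ``inefficiency'' contributed by type 3 vertices with $g\ge 1$ must be absorbed into the $k$-dependent constant, which works precisely because the vertical-input counting restricts such vertices to a $\binom{k}{2}$-sized set rather than letting their number scale with $|\lambda|$.
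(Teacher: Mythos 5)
Your proof is correct and follows the same general strategy as the paper's: reduce the bound to a counting argument by observing that all but a $k$-dependent number of the nontrivial vertices must be of type $(0,1;0,1)$, each contributing the factor $\tfrac{u-s}{su-1}$, with the remaining vertices bounded uniformly. The bookkeeping is different in detail and somewhat sharper: the paper bounds the number of nontrivial vertices by mapping each horizontal edge to its left endpoint and each vertical edge to its bottom endpoint, and bounds the number of vertices adjacent to a vertical edge by $k^2$; you instead compute the exact total $\sum_v i_1(v)=\binom{k}{2}$ of vertical-input incidences, which directly controls $N_2$ and the vertices of types $1,3,4$ with $g\ge 1$, and you derive $N_2+N_3=|\lambda|$ and $N_3+N_4=|\lambda|+k$ from horizontal edge-incidence counts. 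You also handle the type-$3$ vertices with $g\ge 1$ explicitly by peeling off the extra factor $\tfrac{u}{u-s}$, whereas the paper absorbs them into the crude uniform bound $C$ on all weights. Both yield the same conclusion; your version gives tighter (though still crude) constants, while the paper's is slightly more economical to state.
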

\begin{proof} From the definition of $\mathcal{P}_{\lambda/\varnothing}$ we know that a path collection $\pi$ has $|\lambda|$ horizontal edges and $\binom{k+1}{2}$ vertical edges  in $\mathbb{Z}_{\geq 0}^2$. Each edge borders two vertices except the top $k$ vertical edges whose upper vertex is not counted. Due to this there are at most $2 \binom{k+1}{2} - k=k^2$ vertices adjacent to a vertical edge. If we associate to each horizontal edge its left vertex and to each vertical edge its bottom vertex we obtain a surjective map from the set of edges to the set of vertices in $\pi$, whose path configuration is not $(0,0;0,0)$. Consequently, there are at most $|\lambda|+ \binom{k+1}{2}$ nontrivial (i.e. not type $(0,0;0,0)$) vertices in $\pi$. Also the above mapping from horizontal edges to their left vertices contains all vertices of type $(0,1;0,1)$ in its range and the pre-image of each such vertex contains exactly one element. This implies that the number of vertices of type $(0,1;0,1)$ is at least $|\lambda| - k^2$. 

We now recall from (\ref{eq:weights}) that 
$$w_u(0,0;0,0) = 1 \mbox{ and } w_u(0,1;0,1) = \frac{u-s}{1-su}.$$
Let $C \geq 1$ be a constant such that 
$$|w_u(i_1, j_1; i_2; j_2) | \leq C$$
for all $i_1, i_2 \in  \{0, \dots, k\}$ and $j_1, j_2 \in \{0,1\}$. The existence of $C$ is ensured by (\ref{eq:weights}) and it depends on $k,u,q$ (here $s =q^{-1/2}$). Then our work from the previous paragraph and (\ref{defWeights}) suggest that 
$$|\mathcal{W}(\pi)| \leq C^{\binom{k+1}{2} + k^2} \cdot \left( \frac{u-s}{su - 1} \right)^{|\lambda| - k^2},$$
which clearly implies (\ref{eq:weightbound1}). 
\end{proof}

\begin{definition}\label{DefTypical} Let $k \in \mathbb{N}$ and $\lambda \in \mathsf{Sign}_k^+$ be such that $\lambda_1 > \lambda_2 > \cdots > \lambda_k$. We say that a path collection $\pi \in \mathcal{P}_{\lambda / \varnothing}$ is a \emph{typical} path collection of $\mathcal{P}_{\lambda / \varnothing}$ if it only contains vertices of type $(0,0;0,0), (0,1;0,1), (0,1;1,0)$ and $(1, 0; 0, 1)$. We denote the set of all typical path collections by $\mathcal{P}^{typ}_{\lambda / \varnothing}.$ See Figure \ref{S3_1}.
\end{definition}
\begin{figure}[h]
\includegraphics[scale=0.6]{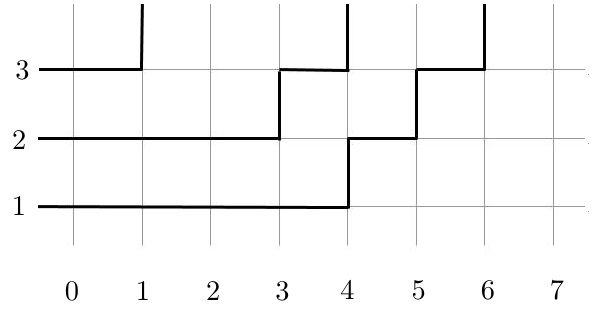}
 \captionsetup{width=0.9\linewidth}
\caption{Example of a path collection belonging to $\mathcal{P}^{typ}_{\lambda / \varnothing}$ where $\lambda = (6,3,1)$.
}
\label{S3_1}
\end{figure}

\begin{lemma} \label{lem:typicalweight} Fix $k \in \mathbb{N}$, $q \in (0,1)$, $s = q^{-1/2}$ and $u > s$.  Let $\lambda \in \mathsf{Sign}_k^+$ be such that $\lambda_1 > \lambda_2 > \cdots > \lambda_k$. If $\pi$ is a typical path collection of $\mathcal{P}_{\lambda / \varnothing}$, then 
\begin{equation}\label{TypicalWeight}
\mathcal{W}(\pi) = \left( \frac{1-q}{1-su} \right)^{\binom{k+1}{2}} \left( \frac{(1-q^{-1})u}{1-su}\right)^{\binom{k}{2}}  \cdot \left( \frac{u-s}{1-s u} \right)^{|\lambda|-\binom{k}{2}},
\end{equation}
where $|\lambda| = \lambda_1 + \cdots+ \lambda_k$ and $\mathcal{W}(\pi)$ is as in (\ref{defWeights}) for $u_1 = \cdots =u_k = u$. 
\end{lemma}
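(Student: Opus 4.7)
The plan is to compute $\mathcal{W}(\pi)$ directly as a product of vertex weights by enumerating the vertices of each allowed type. First I would specialize the weights in (\ref{eq:weights}) at $s = q^{-1/2}$, noting that each of the three nontrivial allowed types $(0,1;0,1)$, $(0,1;1,0)$, $(1,0;0,1)$ has vertical occupation numbers $i_1, i_2 \in \{0,1\}$, so only $g = 0$ is relevant and the weights simplify to
\[
w_u(0,1;0,1) = \tfrac{u-s}{1-su}, \qquad w_u(0,1;1,0) = \tfrac{1-q}{1-su}, \qquad w_u(1,0;0,1) = \tfrac{(1-q^{-1})u}{1-su},
\]
where the factor $1-q^{-1}$ in the last weight arises from $1-s^2 = 1-q^{-1}$. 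Then the task reduces to determining, for an arbitrary $\pi \in \mathcal{P}^{\text{typ}}_{\lambda/\varnothing}$, how many vertices of each of these three types appear.

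The key structural observation is that the vertex type $(1,0;1,0)$ (vertical pass-through) is \emph{not} on the allowed list. Consequently every vertical edge in $\pi$ must start at a $(0,1;1,0)$ turn-up vertex and end either at a $(1,0;0,1)$ turn-right vertex or at the top boundary of $D_k$. So every path alternates between horizontal runs and vertical unit hops. Since no two paths can meet at a single vertex in the typical list, and since $\lambda_1 > \lambda_2 > \cdots > \lambda_k$ forces distinct exit columns, the paths are non-crossing, and the path entering at $(-1,y)\to(0,y)$ must exit at column $\lambda_{k-y+1}$. Counting per path is now mechanical: the $y$-th path makes $k-y+1$ vertical hops, producing $k-y+1$ vertices of type $(0,1;1,0)$ at their bases and $k-y$ vertices of type $(1,0;0,1)$ at their tops (the topmost hop exits $D_k$ and has no top vertex inside $D_k$). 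For horizontal edges in $D_k$, the $y$-th path sweeps from column $0$ to column $\lambda_{k-y+1}$, so it contributes exactly $\lambda_{k-y+1}$ horizontal-outgoing vertices in $D_k$; subtracting the $(1,0;0,1)$ count gives $\lambda_{k-y+1}-(k-y)$ vertices of type $(0,1;0,1)$. Summing over $y=1,\ldots,k$ produces totals $\binom{k+1}{2}$, $\binom{k}{2}$, and $|\lambda|-\binom{k}{2}$ respectively, and substituting these exponents into the product formula for $\mathcal{W}(\pi)$ yields exactly (\ref{TypicalWeight}).

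I expect the main delicate point to be the bookkeeping of horizontal edges near the left boundary: one must be careful that the entry edge $(-1,y)\to(0,y)$ lies \emph{outside} $D_k$ (so it contributes no vertex weight), while the vertex $(0,y)$ itself lies inside $D_k$ and is counted. The cleanest way to avoid off-by-one errors is to count horizontal edges in $D_k$ by pairing each such edge with its unique left endpoint vertex (which must have horizontal-outgoing type, either $(0,1;0,1)$ or $(1,0;0,1)$); then the horizontal displacement of each path inside $D_k$ equals the total number of horizontal-out vertices on that path, and subtracting the already-counted $(1,0;0,1)$ turns gives the $(0,1;0,1)$ count without ambiguity. Everything else is straightforward arithmetic.
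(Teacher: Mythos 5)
Your proposal is correct and takes essentially the same route as the paper: identify the weight of each allowed vertex type at $s=q^{-1/2}$, count how many vertices of each type a typical collection contains, and multiply. The paper does the count globally (between row $i$ and row $i+1$ there are $i$ vertical edges, giving $\binom{k+1}{2}$ turn-ups and $\binom{k}{2}$ turn-rights once the top row is discarded; then $A+C=|\lambda|$ via the left-endpoint map on horizontal edges), while you do it path-by-path and then sum; these are the same calculation in different bookkeeping.

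One small slip worth noting: the non-crossing constraint forces the path entering at $(-1,y)$ to exit at column $\lambda_y$, not $\lambda_{k-y+1}$ (path $1$, entering lowest, exits rightmost). Your per-path pass-through count is therefore mislabeled, but since the turn-up and turn-right counts for a given path depend only on its entry row, and the pass-through count is only used after summing over all paths (where the mislabeling washes out because $\sum_y \lambda_{k-y+1} = \sum_y \lambda_y = |\lambda|$), the final totals $\binom{k+1}{2}$, $\binom{k}{2}$, $|\lambda|-\binom{k}{2}$ and hence equation (\ref{TypicalWeight}) are unaffected.
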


\begin{proof} Since $\pi$ is typical we know that it only contains vertices of type $(0,0;0,0)$, $(0,1;0,1)$, $(0,1;1,0)$ and $(1, 0; 0, 1)$. Furthermore, we have from (\ref{eq:weights}) that 
$$w_u(0,0;0,0) = 1,\hspace{2mm}  w_u(0,1;0,1) = \frac{u-s}{1-su},\hspace{2mm}  w_u(0,1;1,0) = \frac{1-q}{1-su}, \hspace{2mm} w_u(1,0;0,1) = \frac{(1-q^{-1})u}{1-su},$$
where we used that $s^2 = q^{-1}$. If $A$, $B$, $C$ denote the number of vertices in $\pi$ with path configuration $(0,1;0,1)$, $(0,1;1,0)$ and $(1,0;0,1)$ respectively then by (\ref{defWeights}) we know that 
$$\mathcal{W}(\pi) = \left( \frac{1-q}{1-su} \right)^{B} \left( \frac{(1-q^{-1})u}{1-su}\right)^{C}  \cdot \left( \frac{u-s}{1-s u} \right)^{A}.$$
Consequently, it suffices to show that if $\pi$ is typical then $A = |\lambda|-\binom{k}{2}$, $B = \binom{k+1}{2}$, $C = \binom{k}{2}$. 

We now proceed to simply count the the number of vertices of each type in a typical path collection. Notice that between row $i$ and row $i+1$ there are precisey $i$ vertical edges. The bottom vertex of each such edge has type $(0,1;1,0)$ and the top vertex of each such edge has type $(1,0;0,1)$. All other vertices in $\pi$ have type $(0,0;0,0)$ or $(0,1;0,1)$. We conclude from this that $C= 1 + 2 + \cdots + (k-1) = \binom{k}{2}$ and $B = 1+ 2 + \cdots + k = \binom{k+1}{2}$ (notice that the top vertex of the edges connecting row $k$ and $k+1$ are not included in the product defining $\mathcal{W}(\pi)$, while the bottom ones are). What we are left with is computing $A$. 

From the definition of $\mathcal{P}_{\lambda/\varnothing}$ we know that a path collection $\pi$ has $|\lambda|$ horizontal edges in $\mathbb{Z}_{\geq 0}^2$. The map that sends a horizontal edge to its left vertex endpoint maps the set of horizontal edges bijectively to the vertices of type $(0,1;0,1)$ and $(0,1;1,0)$ in $\pi$ and so $A + C =|\lambda|$. We conclude that $A = |\lambda|-\binom{k}{2}$ as desired.
\end{proof}

\begin{proof}(Lemma \ref{FBound}) Combining Lemmas \ref{lem:weightbound} and \ref{lem:typicalweight} we know that there is a constant $C_1$ that depends on $k, q, u$ such that 
\begin{equation}\label{C1}
\begin{split}
&|\mathcal{P}^{typ}_{\lambda / \varnothing}| - C_1\left( \left| \mathcal{P}_{\lambda / \varnothing} \right| -  \left| \mathcal{P}^{typ}_{\lambda / \varnothing}\right| \right)  \leq \mathsf{F}_\lambda([u]^k)  \left( \frac{1-q}{1-su} \right)^{-\binom{k+1}{2}} \times \\
& \left( \frac{(1-q^{-1})u}{1-su}\right)^{-\binom{k}{2}}   \left( \frac{u-s}{1-s u} \right)^{-|\lambda|+\binom{k}{2}} \leq  |\mathcal{P}^{typ}_{\lambda / \varnothing}|  + C_1\left( \left| \mathcal{P}_{\lambda / \varnothing} \right| -  \left| \mathcal{P}^{typ}_{\lambda / \varnothing}\right| \right) .
\end{split}
\end{equation}
From \cite[Equation (85)]{dimitrov2016six} we know that 
\begin{equation}\label{SizeAll}
\left| \mathcal{P}_{\lambda / \varnothing} \right|  =  \prod_{1 \leq i < j \leq k} \frac{\lambda_i-\lambda_j+j-i}{j-i}
\end{equation}
and from \cite[Equation (86)]{dimitrov2016six} we know that 
\begin{equation}\label{SizeTyp}
\left| \mathcal{P}^{typ}_{\lambda / \varnothing} \right|  \geq \prod_{1 \leq i < j \leq k} \frac{\lambda_i-\lambda_j -j + i}{j-i}.
\end{equation}
In particular, the equations (\ref{C1}), (\ref{SizeAll}) and (\ref{SizeTyp}) imply that 
\begin{equation*}
\begin{split}
&\prod_{1 \leq i < j \leq k} \frac{\lambda_i-\lambda_j + j - i}{j-i} - [C_1+ 1]\left( \left| \mathcal{P}_{\lambda / \varnothing} \right| -  \left| \mathcal{P}^{typ}_{\lambda / \varnothing}\right| \right)  \leq \mathsf{F}_\lambda([u]^k)  \left( \frac{1-q}{1-su} \right)^{-\binom{k+1}{2}} \times \\
& \left( \frac{(1-q^{-1})u}{1-su}\right)^{-\binom{k}{2}}   \left( \frac{u-s}{1-s u} \right)^{-|\lambda|+\binom{k}{2}} \leq  \prod_{1 \leq i < j \leq k} \frac{\lambda_i-\lambda_j+j-i}{j-i} + C_1\left( \left| \mathcal{P}_{\lambda / \varnothing} \right| -  \left| \mathcal{P}^{typ}_{\lambda / \varnothing}\right| \right) .
\end{split}
\end{equation*}
The latter equation now clearly implies (\ref{FSqueeze}) since 
$$0 \leq \left| \mathcal{P}_{\lambda / \varnothing} \right| -  \left| \mathcal{P}^{typ}_{\lambda / \varnothing}\right| \leq  \prod_{1 \leq i < j \leq k} \frac{\lambda_i-\lambda_j+j-i}{j-i} - \prod_{1 \leq i < j \leq k} \frac{\lambda_i-\lambda_j -j + i}{j-i} \leq C_2  (\lambda_1 -\lambda_k + k)^{\binom{k}{2} - 1},$$
for some sufficiently large constant $C_2 > 0$ depending on $k$ alone.
\end{proof}

%-------------------------------------------------------------------------------------------------------------------------------------------------------------------------------------------------
%    Section 4
%
%-------------------------------------------------------------------------------------------------------------------------------------------------------------------------------------------------
\section{Proof of Theorem \ref{thm:main}}\label{Section4} In this section we prove Theorem \ref{thm:main}. We accomplish this in two steps. In the first step we prove that the random vectors $\left(Y_1^k(N,M;k), \dots, Y_k^k(N,M;k) \right)$ (i.e. the projections of the random vectors $Y(N,M;k)$ from Theorem \ref{thm:main} to their top row) weakly converge to the Hermite ensemble. In the second step we combine the convergence of $\left(Y_1^k(N,M;k), \dots, Y_k^k(N,M;k) \right)$ to the Hermite ensemble, with the fact that our model satisfies the six-vertex Gibbs property from \cite[Section 6]{dimitrov2016six} to conclude the convergence of $Y(N,M;k)$ to the GUE-corners process of rank $k$. 

%-------------------------------------------------------------------------------------------------------------------------------------------------------------------------------------------------
%    Section 4.1
%
%-------------------------------------------------------------------------------------------------------------------------------------------------------------------------------------------------
\subsection{Convergence to the Hermite ensemble} We begin by recalling the joint distribution of the eigenvalues $\lambda_1 \leq \dots \leq \lambda_k$ of a $k\times k$ matrix from the GUE (recall that these were random Hermitian $k\times k$ matrices with density proportional to $e^{-Tr(X^2)/2}$). Specifically, from \cite[Equation (2.5.3)]{anderson2010introduction} we have the following formula.
\begin{definition} \label{def:GUEmeasure}
If $\mu_{\textrm{GUE}}^k$ denotes the joint distribution of the ordered eigenvalues of a random $k \times k$ GUE matrix, then $\mu_{\textrm{GUE}}^k$ has the following density with respect to Lebesgue measure
\begin{equation} \label{eq:GUEmeasuredef}
{\bf 1}\{ x_k > x_{k-1} > \cdots > x_1\} \left(\frac{1}{\sqrt{2 \pi}} \right)^k  \cdot \frac{1}{\prod_{i=1}^{k-1} i!} \cdot \prod_{1 \leq i < j \leq k} (x_i-x_j)^2 \prod_{i=1}^k e^{-\frac{x_i}{2}}.
\end{equation}
\end{definition}
\begin{remark} In the literature, the measure (\ref{eq:GUEmeasuredef}) is sometimes referred to as the {\em Hermite ensemble} due to its connection to Hermite orthogonal polynomials.
\end{remark}

The main result of this section is as follows.
\begin{proposition}\label{PropHerm} Under the same assumptions as in Theorem \ref{thm:main} we have that the random vectors $Y^k(N,M) = \left(Y_1^k(N,M;k), \dots, Y_k^k(N,M;k) \right)$ converge weakly to $\mu_{\textrm{GUE}}^k$ as $M \rightarrow \infty$. 
\end{proposition}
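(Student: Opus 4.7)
The plan is to prove pointwise convergence of the rescaled probability mass function of $\lambda^k(\pi)$ to the density in Definition \ref{def:GUEmeasure}, and then upgrade this to weak convergence of $Y^k(N,M)$ via dominated convergence.

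The starting point is Lemma \ref{Projections}, specialized to a single level $m_1 = k$, which gives
$$\mathbb{P}^{N,M}_{u,v}\bigl(\lambda^k(\pi)=\mu\bigr) = \frac{\mathsf{F}_\mu([u]^k)\, f(\mu;[v]^M,\rho)}{Z^f([u]^k,[v]^M;k)},\qquad \mu\in\mathsf{Sign}^+_k,$$
with the partition function having the explicit product form recalled in Section \ref{Section2.2}. Under the change of variables $\mu_{k-i+1} = aM + d\sqrt{M}\, y_i$, each admissible $y_i$ has spacing $1/(d\sqrt M)$, so the Lebesgue density of $Y^k(N,M)$ on $\R^k$ is the mass function multiplied by $(d\sqrt M)^k$. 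The task therefore reduces to computing the pointwise $M\to\infty$ limit of
$$(d\sqrt M)^k \cdot \frac{\mathsf{F}_\mu([u]^k)\, f(\mu;[v]^M,\rho)}{Z^f([u]^k,[v]^M;k)}$$
for fixed $y$ with $y_1<\cdots<y_k$, and identifying it with the GUE density.

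The two asymptotic inputs of Section \ref{Section3} do the main work. From Lemma \ref{FBound}, up to a relative error of order $M^{-1/2}$,
$$\mathsf{F}_\mu([u]^k) = (d\sqrt M)^{\binom{k}{2}} \prod_{1 \leq a<b \leq k}(y_b-y_a)\cdot C_1(k,q,u) \left(\frac{u-s}{1-su}\right)^{kaM+d\sqrt M \sum_i y_i} + \text{(lower order)},$$
supplying one Vandermonde factor and an exponential linear in $\sum y_i$. The second Vandermonde factor, a Gaussian factor $\exp(-\tfrac12 \sum y_i^2)$, and all remaining $M$-dependent contributions must come from a steepest descent analysis of the $k$-fold contour integral for $f(\mu;[v]^M,\rho)$ in Lemma \ref{CIFf}; this is precisely the content of Lemma \ref{BMAs}, proved in Section \ref{Section5}. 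The constants $a, d$ in Definition \ref{DefConstants} are chosen so that the phase $S(w) = \log\tfrac{1-qwv}{1-wv} + a\log\tfrac{1-sw}{w-s}$ has a critical point $w_0$, that $S''(w_0)$ together with $d$ produces a Gaussian of unit variance after the rescaling $u_i = w_0 + \tilde u_i/\sqrt{M}$, and that the linear-in-$y$ boundary term arising from $f$ cancels exactly the corresponding term from $\mathsf{F}_\mu$ above.

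Once these two asymptotics are inserted and combined, the exponentials $\bigl(\tfrac{u-s}{1-su}\bigr)^{kaM}$, the $M$-th powers of Cauchy factors, and the Pochhammer constants all combine with the explicit product form of $Z^f([u]^k,[v]^M;k)$ to leave behind exactly the normalizing constant $(2\pi)^{-k/2}/\prod_{i=1}^{k-1} i!$, while the product of the two Vandermonde factors yields $\prod_{a<b}(y_b-y_a)^2$, matching Definition \ref{def:GUEmeasure} on the Weyl chamber $\{y_1<\cdots<y_k\}$. With pointwise density convergence established, the upgrade to weak convergence of $Y^k(N,M)$ to $\mu_{\mathrm{GUE}}^k$ follows by dominated convergence, since the saddle point analysis provides a Gaussian envelope dominating the polynomial Vandermonde growth uniformly in $M$. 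The main obstacle is the steepest descent work of Lemma \ref{BMAs}: one must deform the large circle $\gamma$ of Lemma \ref{CIFf} to a contour through $w_0$ along its direction of steepest descent, verify that $w_0$ is the unique dominant critical point, and obtain error estimates uniform in $y$ on compact subsets of the Weyl chamber.
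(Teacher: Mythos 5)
Your proposal follows the same strategy as the paper: use the projection formula of Lemma \ref{Projections} to write the one-level marginal as a product of the combinatorial factor $\mathsf{F}_\mu([u]^k)$ and the boundary factor $f(\mu;[v]^M,\rho)$, extract one Vandermonde and the exponential-linear term from Lemma \ref{FBound}, extract the second Vandermonde and the Gaussian from the steepest descent analysis of the contour integral (Lemma \ref{BMAs}), combine with the explicit $Z^f$ to recover the GUE density pointwise, and then upgrade to weak convergence. This is precisely how the paper proceeds, with the decomposition $\mathbb{P}(\lambda^k(\pi)=\mu)=A_M(\mu)B_M(\mu)$ of (\ref{DefAB}), pointwise limits in Lemmas \ref{AMAs} and \ref{BMAs}, and the extension to weak convergence in the two-step proof of Proposition \ref{PropHerm}.

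One point deserves correction: you justify the upgrade to weak convergence by ``dominated convergence, since the saddle point analysis provides a Gaussian envelope dominating the polynomial Vandermonde growth uniformly in $M$.'' The uniform bounds actually proved in Lemmas \ref{AMAs} and \ref{BMAs} are only \emph{constant} bounds on compact cubes $[-A,A]^k$ intersected with the Weyl chamber, with constants depending on $A$; no $M$-uniform Gaussian decay in $x$ over all of $\mathbb{R}^k$ is established, nor is it needed. The paper instead works with the bounded convergence theorem on closed rectangles strictly inside the open Weyl chamber, writes each open subset of the Weyl chamber as a countable union of such rectangles, and concludes via the Portmanteau theorem together with the fact that $\mu_{\textrm{GUE}}^k$ puts no mass outside the Weyl chamber. (Alternatively, Scheff\'e's lemma --- pointwise convergence of probability densities forces $L^1$ convergence --- would deliver the upgrade without any dominating envelope at all.) So the outline is right, but the asserted Gaussian dominating function is not provided by the cited lemmas and should be replaced by one of these mechanisms.
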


The starting point of our proof of Proposition \ref{PropHerm} is Lemma \ref{Projections}, from which we know that 
\begin{equation}\label{DefAB}
\begin{split}
&\mathbb{P}_{u,v}^{N,M}(\lambda^k(\pi) = \mu) = A_M(\mu) \cdot B_M(\mu), \mbox{ where } \\
&A_M(\mu)= \mathsf{F}_\mu([u]^k) \cdot  M^{-\binom{k}{2} \cdot (1/2)} \cdot  \left( \frac{1-q}{1-su} \right)^{-\binom{k+1}{2}} \left( \frac{(1-q^{-1})u}{1-su}\right)^{-\binom{k}{2}}  \cdot \left( \frac{u-s}{1-s u} \right)^{-|\mu|+\binom{k}{2}} \\
&B_M(\mu) =  f(\mu; [v]^M, \rho) \cdot M^{\binom{k}{2} \cdot (1/2)} \cdot  \left( \frac{1-q}{1-su} \right)^{\binom{k+1}{2}} \left( \frac{(1-q^{-1})u}{1-su}\right)^{\binom{k}{2}}  \cdot \left( \frac{u-s}{1-s u} \right)^{|\mu|- \binom{k}{2}} Z_M^{-1}, \\
& \mbox{ with  }Z_M = (q;q)_k \cdot \left(\frac{1-s^{-1} u}{1 - su} \right)^k \cdot \left( \frac{1-quv}{1-uv} \right)^{kM}.
\end{split}
\end{equation}
We recall that $\mathsf{F}_\mu([u]^k) $ stands for $\mathsf{F}_\mu$ with $u_1 = \cdots = u_k = u$ and also $f(\mu; [v]^M, \rho)$ stands for $f(\mu; {\bf v}, \rho)$ with $v_1 = \cdots = v_M = v$. We also recall that $|\mu| = \mu_1 + \cdots + \mu_k$.

The following lemma details the asymptotics of $A_M(\lambda)$ using the combinatorial estimates for $\mathsf{F}_\lambda([u]^k)$ from Lemma \ref{FBound}.
\begin{lemma}\label{AMAs} Suppose that $u,q,s$ satisfy $q \in (0,1)$, $s = q^{-1/2}$, $u > s$. Fix $a, A > 0$ and suppose that $x_1, \dots, x_k \in \mathbb{R}$ satisfy $A \geq x_k > x_{k-1} > \cdots > x_1 \geq -A$. Let $M_0(a,A) \geq 1$ be sufficiently large so that $aM_0 - A\sqrt{M_0} \geq 1$. For all $M \geq M_0$ we define $\lambda(M) \in \mathsf{Sign}^+_k$ through $\lambda_i(M) = \lfloor aM + \sqrt{M} x_{k-i+1} \rfloor$ for $i = 1, \dots, k$. Then we have 
\begin{equation}\label{ALimit}
\lim_{M \rightarrow \infty} A_M(\lambda(M)) = \prod_{1 \leq i < j \leq k} \frac{x_j- x_i}{j-i} =\frac{1}{\prod_{i=1}^{k-1} i!} \cdot  \prod_{1 \leq i < j \leq k} (x_j - x_i).
\end{equation} 
Moreoever, there is a constant $C > 0$ (it depends on $k, a, A, u ,q$) such that for all $M \geq M_0$ we have
\begin{equation}\label{ABound}
|A_M(\lambda(M))| \leq C.
\end{equation} 
\end{lemma}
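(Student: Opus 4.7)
The plan is to use Lemma \ref{FBound} as a near-exact formula for the combinatorial part of $A_M(\lambda(M))$, substitute the scaling $\lambda_i(M) = \lfloor aM + \sqrt{M}\, x_{k-i+1}\rfloor$, and track the leading order in $M$. The pleasant feature is that the normalizing factors multiplying $\mathsf{F}_\mu([u]^k)$ in the definition of $A_M(\mu)$ are \emph{exactly} the ones appearing on the left and right sides of the squeeze in Lemma \ref{FBound} (with the $\lambda$ there replaced by our $\lambda(M)$), so the dependence on $|\mu|$ cancels completely. After this cancellation, $A_M(\lambda(M))$ sits between
\[
M^{-\binom{k}{2}/2}\Biggl(\prod_{1 \leq i < j \leq k}\frac{\lambda_i(M) - \lambda_j(M) + j - i}{j - i} \pm C\cdot (\lambda_1(M) - \lambda_k(M) + k)^{\binom{k}{2}-1}\Biggr).
\]

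Next I would evaluate the main term. Because $x_1 < x_2 < \cdots < x_k$ the sequence $\lambda_i(M)$ is eventually strictly decreasing in $i$ (so the hypothesis of Lemma \ref{FBound} holds for $M$ large), and
\[
\lambda_i(M) - \lambda_j(M) = \sqrt{M}\,(x_{k-i+1} - x_{k-j+1}) + O(1)
\]
for each pair $i<j$, with the implicit constant depending only on $A$. Factoring out $\sqrt{M}$ from each of the $\binom{k}{2}$ factors produces exactly $M^{\binom{k}{2}/2}$, which cancels the prefactor $M^{-\binom{k}{2}/2}$; the reindexing $p = k-j+1$, $q = k-i+1$ turns $\prod_{i<j}(x_{k-i+1}-x_{k-j+1})$ into $\prod_{p<q}(x_q - x_p)$, and $\prod_{1\leq i<j \leq k}(j-i) = \prod_{i=1}^{k-1} i!$ produces the claimed constant. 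The $O(1)$ corrections and the $j-i$ shifts contribute lower order, so the main term converges to $\prod_{i<j}(x_j - x_i)/\prod_{i=1}^{k-1} i!$.

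For the error term, the bound $|x_i| \leq A$ gives $\lambda_1(M) - \lambda_k(M) + k \leq 2A\sqrt{M} + k$ for all $M \geq M_0$, so
\[
M^{-\binom{k}{2}/2}\cdot (\lambda_1(M) - \lambda_k(M) + k)^{\binom{k}{2}-1} \leq C' \cdot M^{-1/2} \xrightarrow[M\to\infty]{} 0,
\]
which establishes (\ref{ALimit}). The uniform bound (\ref{ABound}) is obtained by the same squeeze: both the main term and the error term are bounded by constants depending only on $k, a, A, u, q$, since the factors $\sqrt{M}(x_{k-i+1} - x_{k-j+1}) + O(1)$ are at most $2A\sqrt{M} + O(1)$ in absolute value, so after multiplying by $M^{-\binom{k}{2}/2}$ everything stays $O(1)$.

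I do not anticipate a genuine obstacle; the argument is essentially bookkeeping. The only place where care is needed is the reindexing between the decreasing convention for signatures and the increasing convention for $x_1 < \cdots < x_k$, and the observation that the exponents of the three factors $(1-q)/(1-su)$, $(1-q^{-1})u/(1-su)$ and $(u-s)/(1-su)$ in the definition of $A_M(\mu)$ are exactly the reciprocals of those appearing in Lemma \ref{FBound}, so that no residual exponential-in-$M$ factor survives.
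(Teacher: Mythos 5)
Your argument for the limit \eqref{ALimit} matches the paper's and is fine. For the uniform bound \eqref{ABound}, however, there is a gap. You invoke the squeeze of Lemma \ref{FBound}, which requires $\lambda_1(M) > \lambda_2(M) > \cdots > \lambda_k(M)$, but this strict decrease holds only once $M$ is large enough that $\sqrt{M}\,\min_i(x_{i+1}-x_i)$ exceeds roughly $1$, and that threshold depends on the gaps $x_{i+1}-x_i$, not only on $a$ and $A$. Since $M_0 = M_0(a,A)$ and the constant $C$ in \eqref{ABound} is required to depend only on $k,a,A,u,q$ (this uniformity over $x\in[-A,A]^k$ is what is actually used later, in Step~1 of the proof of Proposition \ref{PropHerm} where a single dominating bound is needed), your argument does not cover the values $M\geq M_0$ for which two consecutive $\lambda_i(M)$ coincide. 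You cannot simply absorb those $M$ into the constant, because how many such $M$ there are depends on $x$.

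The paper sidesteps this by proving \eqref{ABound} from Lemma \ref{lem:weightbound} rather than Lemma \ref{FBound}: the per-path weight bound $|\mathcal{W}(\pi)| \leq \tilde{C}\bigl(\tfrac{u-s}{su-1}\bigr)^{|\lambda|}$ holds for \emph{any} $\lambda\in\mathsf{Sign}_k^+$, and after the exponential factor cancels against the $\bigl(\tfrac{u-s}{1-su}\bigr)^{-|\mu|+\binom{k}{2}}$ in $A_M$, one gets $|A_M(\lambda(M))|\leq \tilde{C}\,M^{-\binom{k}{2}/2}|\mathcal{P}_{\lambda(M)/\varnothing}|$. Plugging in the counting formula \eqref{SizeAll}, which is likewise valid without the strict-decrease hypothesis, gives $|A_M(\lambda(M))|\leq \tilde{C}\prod_{i<j}\tfrac{x_j-x_i+2kM^{-1/2}}{j-i}\leq \tilde{C}[2A+2k]^{\binom{k}{2}}$ for all $M\geq M_0$. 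You should replace your proof of \eqref{ABound} with this route; the rest of your proposal (the exact cancellation of the normalizing factors, the reindexing $p=k-j+1$, $q=k-i+1$, and the error estimate for the limit) is correct and the same as the paper's.
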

\begin{proof}
We first prove (\ref{ABound}). From Lemma \ref{lem:weightbound} and Definition \ref{DefF} we know that 
$$|A_M(\lambda(M))| \leq \tilde{C} M^{-\binom{k}{2} \cdot (1/2)}  \cdot |\mathcal{P}_{\lambda(M) / \varnothing}| = \tilde{C}  \cdot\prod_{1 \leq i < j \leq k} \frac{\lambda_i(M)-\lambda_j(M)+j-i}{M^{1/2}(j-i)},$$
where in the last equality we used (\ref{SizeAll}) and $\tilde{C}$ is as in Lemma \ref{lem:weightbound} . Plugging in the definition of $\lambda_i(M)$ we see that for $M \geq M_0$ we have
$$|A_M(\lambda(M))| \leq \tilde{C} \prod_{1 \leq i < j \leq k} \frac{x_j -x_i+ 2k M^{-1/2}}{j-i} \leq \tilde{C} [2A + 2k ]^{\binom{k}{2}} ,$$
which clearly implies (\ref{ABound}). 

In the remainder of the proof we establish (\ref{ALimit}). By Lemma \ref{FBound} we know that there is a constant $C$ that depends on $k,u,q$ such that for all large enough $M$ we have
\begin{equation*}
\left|A_M(\lambda(M)) -   \prod_{1 \leq i < j \leq k} \hspace{-2mm}  \frac{\lambda_i(M)-\lambda_j(M)+j-i}{M^{1/2}(j-i)} \right| \leq C \cdot M^{- \binom{k}{2}\cdot (1/2)} \cdot[2AM^{1/2} + 1 + k]^{\binom{k}{2} - 1}.
\end{equation*}
Using that 
$$\lim_{M \rightarrow \infty} \prod_{1 \leq i < j \leq k} \left(\lambda_i(M)-\lambda_j(M)+j-i \right)M^{-1/2} =   \prod_{1 \leq i < j \leq k}(x_j- x_i),$$
we see that the above equation implies (\ref{ALimit}).
\end{proof}

The following lemma details the asymptotics of $B_M(\lambda)$. 
\begin{lemma}\label{BMAs} Suppose that $v,u,q,s,a,d$ are as in Definition \ref{DefConstants} and $k \in \mathbb{N}$. Fix $ A > 0$ and suppose that $x_1, \dots, x_k \in \mathbb{R}$ satisfy $A \geq x_k > x_{k-1} > \cdots > x_1 \geq -A$. Let $M_0(a,A) \geq 1$ be sufficiently sufficiently large so that $aM_0 - A\sqrt{M_0} \geq 1$. For all $M \geq M_0$ we define $\lambda(M) \in \mathsf{Sign}^+_k$ through $\lambda_i(M) = \lfloor aM + d\sqrt{M} x_{k-i+1} \rfloor$ for $i = 1, \dots, k$. Then we have 
\begin{equation}\label{BLimit}
\lim_{M \rightarrow \infty} d^{k} M^{k/2} B_M(\lambda(M)) =   d^{- \binom{k}{2}} \cdot (\sqrt{2\pi})^{-k}\prod_{1 \leq i < j \leq k} (x_j - x_i) \cdot \prod_{i = 1}^k e^{-x_i^2/2}.
\end{equation} 
Moreoever, there is a constant $C > 0$ (it depends on $k, a, A, u ,v,q$) such that for all $M \geq M_0$
\begin{equation}\label{BBound}
|d^{k} M^{k/2} B_M(\lambda(M))| \leq C.
\end{equation} 
\end{lemma}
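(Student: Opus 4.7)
The proof is a multivariate steepest descent analysis applied to the contour integral formula of Lemma \ref{CIFf}. Substituting $\lambda_i(M) = aM + d\sqrt{M}\,x_{k-i+1} + O(1)$ and grouping the $M$- and $\sqrt{M}$-dependent exponentials, the integrand of $f(\lambda(M);[v]^M,\rho)$ decomposes as
$$\prod_{i=1}^{k}\exp\!\bigl(M\,S(u_i) + d\sqrt{M}\,x_{k-i+1}\,L(u_i) + O(1)\cdot L(u_i)\bigr),$$
modulated by the rational prefactors in (\ref{CIFfEq}), where $L(u) = \log\!\bigl((1-su)/(u-s)\bigr)$ and $S(u) = a\,L(u) + \log\!\bigl((1-quv)/(1-uv)\bigr)$. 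A direct differentiation gives $S'(u) = a(s^2-1)/[(1-su)(u-s)] + v(1-q)/[(1-quv)(1-uv)]$, and the specific value of $a$ in Definition \ref{DefConstants} is designed precisely so that $u^* := u$ is a critical point of $S$, i.e.\ $S'(u^*) = 0$. Differentiating once more produces $S''(u^*) = 2c > 0$ and $L'(u^*) = b$; the constants $b$, $c$, $d$ are exactly those needed to encode the local geometry of the integrand near $u^*$.

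\textbf{Contour deformation and local rescaling.} Since $s < u^* < v^{-1}$, I take the contours in (\ref{CIFfEq}) to be circles $|w| = u^*$; each such circle passes through $u^*$ with vertical tangent, which is the steepest descent direction because $S''(u^*) > 0$. A direct estimate verifies that $\operatorname{Re}(S(u^* e^{\iota\theta}))$ attains its strict global maximum on $[-\pi,\pi]$ only at $\theta = 0$. Change variables $u_i = u^* + \iota\eta_i/\sqrt{M}$; on the mesoscopic window $|\eta_i| \leq M^{1/6}$, Taylor expansion yields
$$e^{M S(u_i) + d\sqrt{M}\,x_{k-i+1} L(u_i)} = e^{M S(u^*) + d\sqrt{M}\,L(u^*)\,x_{k-i+1}}\cdot e^{-c\,\eta_i^2 + \iota\,d b\,x_{k-i+1}\,\eta_i}\bigl(1+o(1)\bigr),$$
the Vandermonde numerator scales as $\prod_{\alpha<\beta}(u_\alpha - u_\beta) = (\iota/\sqrt{M})^{\binom{k}{2}}\prod_{\alpha<\beta}(\eta_\alpha - \eta_\beta)$, the denominator $\prod_{\alpha<\beta}(u_\alpha - q u_\beta)$ converges to $(u^*(1-q))^{\binom{k}{2}}$, and each factor $(-s(1-su_i))^{-1}$ converges to its value at $u^*$. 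The tail $|\eta_i| > M^{1/6}$ is exponentially small thanks to the strict maximum of $\operatorname{Re}(S)$ on the contour.

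\textbf{Gaussian--Vandermonde integral.} The limiting object is
$$I := \int_{\mathbb{R}^k}\prod_{\alpha<\beta}(\eta_\alpha - \eta_\beta)\,\prod_{i=1}^k e^{-c\,\eta_i^2 + \iota\,d b\,x_{k-i+1}\,\eta_i}\,d\eta_i,$$
which I evaluate by writing the Vandermonde as $\det(\eta_r^{j-1})_{r,j=1}^k$, bringing each $\eta_r^{j-1}$ inside the $r$-th one-dimensional integral as $(\iota^{-1}\partial_{\alpha_r})^{j-1}$ acting on $\int e^{-c\eta^2 + \iota\alpha_r\eta}\,d\eta = \sqrt{\pi/c}\,e^{-\alpha_r^2/(4c)}$ (with $\alpha_r = d b\,x_{k-r+1}$), and then simplifying the resulting determinant via the classical identity $\det(H_{j-1}(y_r))_{r,j} = 2^{\binom{k}{2}}\prod_{r<r'}(y_{r'} - y_r)$. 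The identity $d^2 b^2 = 2c$, built into Definition \ref{DefConstants}, forces $\alpha_r^2/(4c) = x_{k-r+1}^2/2$, producing $\prod_i e^{-x_i^2/2}$; the reindexing $y_r = x_{k-r+1}$ then yields the Vandermonde $\prod_{i<j}(x_j - x_i)$ with the correct sign.

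\textbf{Final matching and the principal obstacle.} When $f(\lambda(M);[v]^M,\rho)$ is combined with the remaining factors of $B_M(\lambda(M))$ and $Z_M^{-1}$, the exponential $e^{kM S(u^*)}$ cancels exactly against the product $((u-s)/(1-su))^{|\lambda(M)|}\cdot((1-uv)/(1-quv))^{kM}$ (using $|\lambda(M)| = k a M + d\sqrt{M}\sum_i x_i + O(1)$), and the linear-in-$\sqrt{M}$ factor $e^{d\sqrt{M}\,L(u^*)\sum_i x_i}$ is annihilated in the same way. What remains are finite multiplicative constants: $c(\lambda) = (-s^2)^k$ (which is independent of $\lambda$ because $\lambda(M)$ has $k$ distinct parts for large $M$), the $(q;q)_k$ from (\ref{CIFfEq}) and $Z_M$, powers of $s$, $1-su$, $1-s^{-1}u$, and the prefactor $d^k M^{k/2}$ from the statement. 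These collapse, via $L'(u^*) = b$, $d^2 b^2 = 2c$, and algebraic simplification, to precisely $d^{-\binom{k}{2}}(\sqrt{2\pi})^{-k}\prod_{i<j}(x_j - x_i)\prod_i e^{-x_i^2/2}$, proving (\ref{BLimit}). The bound (\ref{BBound}) follows from the same estimates performed uniformly in $(x_1,\dots,x_k)\in[-A,A]^k$, since every quantity depends continuously on the $x_i$. The main obstacle is precisely this final bookkeeping step: isolating the cancellations of the $M$-growing terms, tracking the signs and powers of $\iota$ coming from the contour change of variables, and verifying that the Hermite-polynomial evaluation of $I$, multiplied by the many rational prefactors, condenses exactly to $d^{-\binom{k}{2}}(\sqrt{2\pi})^{-k}$.
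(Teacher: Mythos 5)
Your proposal follows essentially the same steepest-descent strategy as the paper: identify $u$ as the critical point (with $S'(u)=0$, $S''(u)=2c$, $L'(u)=b$ by design of the constants $a,b,c,d$), deform the contour to pass through $u$ in the vertical direction, rescale by $u_i = u + \iota\eta_i/\sqrt{M}$, reduce to a Gaussian--Vandermonde integral, and evaluate it via Hermite polynomials. Two places where your write-up is looser than what the paper actually requires: (i) the rescaling $u_i = u + \iota\eta_i/\sqrt{M}$ parametrizes a vertical segment, not the circle $|w|=u$; the paper handles this by first deforming $\gamma$ to a hybrid contour $\mathcal{C}$ that is an honest vertical segment near $u$ (so the Apollonius-circle inequality $\operatorname{Re} G \le 0$ is established on $\mathcal{C}$, and the Taylor expansion applies directly), and (ii) the bound (\ref{BBound}) does not follow merely from continuity in the $x_i$ -- one needs a genuine integrable dominating function uniform in $M$ and in $(x_1,\dots,x_k)\in[-A,A]^k$, which the paper constructs in Step~2 of its proof; your mesoscopic-window/tail-decay estimates can be turned into such a domination argument, but the continuity claim alone does not supply it.
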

Lemma \ref{BMAs} is the main technical result we need in the proof of Theorem \ref{thm:main}. The proof of this lemma is postponed until Section \ref{Section5}, and relies on a careful steepest descend analysis using the contour integral formula for $f(\mu; [v]^M, \rho)$ afforded by Lemma \ref{CIFf}.\\

In the remainder of this section we prove Proposition \ref{PropHerm}
\begin{proof} (Proposition \ref{PropHerm}) For clarity we split the proof into two steps.

{\bf \raggedleft Step 1.} Let $\mathbb{W}_k^{o}$ denote the open Weyl chamber in $\mathbb{R}^k$, i.e. 
$$\mathbb{W}^{o}_{k} := \{(x_1, \dots, x_k) \in \mathbb{R}^k: x_k > x_{k-1} > \cdots > x_1 \}.$$
Suppose that $R = [a_1, b_1] \times \cdots \times [a_k, b_k]$ is a closed rectangle such that $R \subset \mathbb{W}_k^{o}.$ The purpose of this step is to establish the following statement
\begin{equation}\label{limitRect}
\lim_{M \rightarrow \infty} \mathbb{P} \left( Y^k(N,M) \in R \right) = \int_R \mu_{GUE}^k(dx_1, \dots, dx_k).
\end{equation}

Let $A$ be sufficiently large so that $A \geq 1+ \max_{1 \leq i \leq k}|a_i| +\max_{1 \leq i \leq k}|b_i|.$ In addition if $M \in \mathbb{N}$ is given and $\mu \in \mathsf{Sign}_k^+$ we denote by $Q_\mu$ the cube
$$Q_\mu = \left[ \mu_k, \mu_k +1 \right) \times \cdots  \times \left[ \mu_1, \mu_1 + 1 \right).$$
We also write $L_i(M) =  \lceil a_i d\sqrt{M} +aM \rceil$ and $U_i(M) = \lfloor b_1 d\sqrt{M} + aM \rfloor$ for $i = 1,\dots, k$.

We first observe that for all sufficiently large $M$ we have
\begin{equation}\label{PrelimitRect}
\begin{split}
&\mathbb{P} \left( Y^k(N,M) \in R \right)  = \sum_{\lambda_1 = L_k(M)}^{U_k(M)} \cdots   \sum_{\lambda_k = L_1(M)}^{U_1(M)} \mathbb{P}_{u,v}^{N,M} \left( \lambda^k_i(\pi) = \lambda_i \mbox{ for $i = 1, \dots, k$} \right) = \\
&    \int_{[-A, A]^k} f_M(x_1, \dots, x_k) dx_1 \cdots dx_k,
\end{split}
\end{equation}
where $f_M(x)$ is a step function that is given by $d^{k} M^{k/2} A_M(\mu) B_M(\mu) $ if $x d \sqrt{M} + {\bf 1}_kaM \in Q_\mu \mbox{ for some } \mu = (\mu_1, \dots, \mu_k) \in \mathsf{Sign}^+_k \mbox{ such that } L_i(M) \leq \mu_{k-i+1} \leq U_i(M) \mbox{ for } i = 1, \dots, k; $
and $f_M(x) = 0$ otherwise. In the latter formula ${\bf 1}_k$ is the vector in $\mathbb{R}^k$ with all coordinates equal to $1$. 

By Lemmas \ref{AMAs} and \ref{BMAs} we know that for almost every $x \in [-A, A]^k$ we have
$$\lim_{M \rightarrow \infty} f_M(x_1, \dots, x_k) \rightarrow {\bf 1}_{R} \cdot \left(\frac{1}{\sqrt{2 \pi}} \right)^k \cdot  \frac{1}{\prod_{i=1}^{k-1} i!} \cdot \prod_{1 \leq i < j \leq k} (x_i-x_j)^2 \prod_{i=1}^k e^{-\frac{x_i}{2}}$$
 and $|f_M(x)| \leq C$ for some $C$ that depends on $A, u, q, v, k$ alone. Consequently, by the bounded convergence theorem we see that the $M \rightarrow \infty$ limit of (\ref{PrelimitRect}) implies (\ref{limitRect}).\\

{\bf \raggedleft Step 2.} The main goal of this step is to prove the following statement. For any open set $U$ with $U \subset \mathbb{W}_k^{o}$ we have that 
\begin{equation}\label{limitinfOpen}
\liminf_{M \rightarrow \infty} \mathbb{P} \left( Y^k(N,M) \in U  \right) \geq \int_U \mu_{GUE}^k(dx_1, \dots, dx_k).
\end{equation}
If we assume the validity of (\ref{limitinfOpen}) then we have that for any open set $O \subset \mathbb{R}^k$, 
$$\liminf_{M \rightarrow \infty} \mathbb{P} \left( Y^k(N,M) \in O \right) \geq\liminf_{M \rightarrow \infty} \mathbb{P} \left( Y^k(N,M) \in O \cap \mathbb{W}^o_k \right) \geq $$
$$\int_{O \cap \mathbb{W}_k^o} \mu_{GUE}^k(dx_1, \dots, dx_k) =\int_{O} \mu_{GUE}^k(dx_1, \dots, dx_k) ,$$
where in the last equality we used that the density of $\mu_{GUE}^k$ is zero outside of $\mathbb{W}_k^o$. The latter inequality and \cite[Theorem 3.2.11]{Durrett} imply the weak convergence of $Y_k(N,M)$ to $\mu_{\textrm{GUE}}^k$. Thus it suffices to prove (\ref{limitinfOpen}).

Let $U$ be an open subset of $ \mathbb{W}_k^{o}$. Then by \cite[Chapter 1, Theorem 1.4]{SS} we know that $U = \cup_{i = 1}^\infty R_i$ where $R_i$ are closed rectangles with disjoint interiors. Let $n \in \mathbb{N}$ and $\epsilon > 0$ be given. For $i = 1, \dots, n$ we let
$$R_i^{\epsilon} = [a^i_1 + \epsilon, b_1^i - \epsilon] \times \cdots \times [a^i_k + \epsilon, b_k^i - \epsilon] \mbox{ where } R_i = [a^i_1 , b_1^i ] \times \cdots \times [a^i_k , b_k^i ].$$
Using our result from Step 1 we know that 
$$\liminf_{M \rightarrow \infty} \mathbb{P} \left( Y^k(N,M) \in U \right)  \geq \liminf_{M \rightarrow \infty} \mathbb{P} \left( Y^k(N,M) \in \cup_{i = 1}^n R_i^{\epsilon} \right)  = $$
$$\liminf_{M \rightarrow \infty} \sum_{i = 1}^n  \mathbb{P} \left( Y^k(N,M) \in R_i^{\epsilon} \right) = \sum_{i = 1}^n  \int_{R_i^{\epsilon}} \mu_{GUE}^k(dx_1, \dots, dx_k).$$
Letting $\epsilon \rightarrow 0$ and applying the dominated convergence theorem with dominating function
$${\bf 1}\{ x_k > x_{k-1} > \cdots > x_1\} \left(\frac{1}{\sqrt{2 \pi}} \right)^k  \frac{1}{\prod_{i=1}^{k-1} i!} \cdot \prod_{1 \leq i < j \leq k} (x_i-x_j)^2 \prod_{i=1}^k e^{-\frac{x_i}{2}} $$
we conclude that 
$$\liminf_{M \rightarrow \infty} \mathbb{P} \left( Y^k(N,M) \in U \right)  \geq \sum_{i = 1}^n  \int_{R_i} \mu_{GUE}^k(dx_1, \dots, dx_k).$$
Letting $n \rightarrow \infty$ and using the monotone convergence theorem we conclude that (\ref{limitinfOpen}) holds.
\end{proof}

%-------------------------------------------------------------------------------------------------------------------------------------------------------------------------------------------------
%    Section 4.2
%
%-------------------------------------------------------------------------------------------------------------------------------------------------------------------------------------------------
\subsection{Gibbs properties}\label{Section4.2} In this section we give the proof of Theorem \ref{thm:main}. The proof will be an easy consequence of Proposition \ref{PropHerm} and the fact that $\mathbb{P}^{N,M}_{u,v}$ satisfies what is known as the six-vertex Gibbs property, while the GUE-corners process satisfies what is known as the continuous Gibbs property. We start by explaining the latter two Gibbs properties. Our discussion will be brief, and we refer the interested reader to \cite[Sections 5 and 6]{dimitrov2016six} for a more detailed exposition.

We define several important concepts, adopting some of the notation from \cite{Gor14}. Let $\mathsf{GT}_k$ denote the set of $k$-tuples of {\em distinct} integers
$$\mathsf{GT}_{n} = \{ \lambda \in \mathbb{Z}^n: \lambda_1 < \lambda_2 < \cdots < \lambda_k\}.$$
We let $\mathsf{GT}_k^+$ be the subset of $\mathsf{GT}_k$ with $\lambda_1 \geq 0$.  We say that $\lambda \in \mathsf{GT}_k $ and $\mu \in \mathsf{GT}_{k-1}$ {\em interlace} and write $\mu \preceq \lambda$ if 
$$\lambda_1 \leq \mu_1 \leq \lambda_2 \leq \cdots \leq \mu_{k-1} \leq \lambda_k.$$

Let $\mathsf{GT}^{k}$ denote the set of sequences 
$$\mu^1 \preceq \mu^2 \preceq \cdots \preceq \mu^k, \hspace{3mm} \mu^i \in \mathsf{GT}_i, \hspace{2mm} 1 \leq i \leq k.$$
We call elements of $\mathsf{GT}^{k}$ {\em half-strict} Gelfand-Tsetlin patterns (they are also known as monotonous triangles, cf. \cite{MRR}). We also let $\mathsf{GT}^{k+}$ be the subset of $\mathsf{GT}^k$ with $\mu^k \in \mathsf{GT}_k^+$. For $\lambda \in \mathsf{GT}_k$ we let $\mathsf{GT}_\lambda \subset \mathsf{GT}^{k}$ denote the set of half-strict Gelfand-Tsetlin patterns $\mu^1 \preceq \cdots \preceq \mu^k$ such that $\mu^k = \lambda$.\\
 
We turn back to the notation from Section \ref{Section1.2} and consider $\pi \in \mathcal{P}_N$. For $k = 1,\dots,N$ we have that if we define $\mu^k_i(\pi) = \lambda^k_{k-i+1}(\pi)$ for $i = 1,\dots,k$ then $\mu^k \in \mathsf{GT}_k^+$. In addition, $\mu^{k+1} \succeq \mu^k$ for $k = 1,\dots,N-1$. Consequently, the sequence $\mu^1 ,\dots,\mu^k$ defines an element of $\mathsf{GT}^{k+}$. It is easy to see that the map $h : \mathcal{P}_k  \rightarrow \mathsf{GT}^{k+}$, given by $h(\pi) = \mu^1(\pi)  \preceq  \cdots \preceq \mu^k(\pi)$, is a bijection. For $\lambda \in \mathsf{GT}_k^+$ we let 
$$\mathcal{P}^{\lambda}_k = \{ \pi \in \mathcal{P}_k : \lambda^k_i(\pi) = \lambda_{k-i+1} \mbox{ for } i = 1,\dots,k\}.$$
One observes that by restriction, the map $h$ is a bijection between $\mathsf{GT}_\lambda$ and $\mathcal{P}^{\lambda}_k$. Given $\pi \in \mathcal{P}^{\lambda}_k$ and a vertex path configuration $(i_1, j_1; i_2, j_2)$ we let $N_{\pi, \lambda} (i_1,j_1; i_2, j_2)$ denote the number of vertices $(x,y) \in [1, \lambda_k] \times [1, k] \cap \mathbb{Z}^2$ with arrow configuration $(i_1,j_1; i_2, j_2)$. We abbreviate $N_1= N_{\pi,\lambda}(0, 0; 0, 0)$, $N_2 =N_{\pi, \lambda}(1, 1; 1, 1)$,  $N_3 = N_{\pi, \lambda}(1, 0; 1, 0)$, $N_4 = N_{\pi, \lambda}(0, 1; 0, 1)$, $N_5 = N_{\pi, \lambda}(1, 0; 0, 1)$, and $N_6 =N_{\pi, \lambda}(0, 1; 1, 0)$. 

With the above notation we make the following definition.
\begin{definition}\label{DGP}
Fix $w_1,w_2,w_3,w_4,w_5,w_6 > 0$. A probability distribution $\rho$ on $\mathsf{GT}^{k+}$ is said to satisfy the {\em six-vertex Gibbs property} (with weights $(w_1,w_2,w_3,w_4,w_5,w_6)$) if the following holds. For any $\lambda \in \mathsf{GT}_k^+$ such that 
$$\sum_{(\mu^1, \dots, \mu^k) \in \mathsf{GT}^{k+}: \mu^k = \lambda} \rho\left( \mu^1, \dots, \mu^k\right) > 0$$
 we have that the measure $\nu$ on $\mathcal{P}^\lambda_k$ defined through
$$\nu( h^{-1}(\omega)) = \rho(\omega| \mu^k = \lambda)$$
satisfies the condition
$$\nu(h^{-1}(\omega)) \propto w_1^{N_1}w_2^{N_2}w_3^{N_3}w_4^{N_4}w_5^{N_5}w_6^{N_6}.$$
In the above $ \rho(\cdot | \mu^k = \lambda)$ stands for the measure $\rho$ conditioned on $\mu^k= \lambda$ and the numbers $N_1, \dots, N_6$ are defined with respect to $\lambda$ and the path collection $\pi= h^{-1} (\omega)$. 
\end{definition}
\begin{remark} In simple terms, Definition \ref{DGP}, states that a probability measure on $\mathsf{GT}^{k+}$ satisfies the six-vertex Gibbs property if it can be realized from a measure of the type (\ref{GeneralMeasure}) with vertex weights $w_1, \dots, w_6$ for the six types of vertices under the bijection $h$. 
\end{remark}
One readily observes by the definition of $\mathbb{P}_{u,v}^{N,M}$ that if $\omega$ is $\mathbb{P}_{u,v}^{N,M}$-distributed and we define $\mu^j_i(\pi) = \lambda^j_{j-i+1}(\pi)$ for $1 \leq i \leq j \leq k$ then the law of $\left(\mu^j_i \right)_{1 \leq i \leq j \leq k}$ satisfies the six-vertex Gibbs property with weights
\begin{equation}\label{S6eqWT}
(w_1,w_2,w_3,w_4,w_5,w_6) =  \left( 1 , \frac{u - s^{-1}}{us - 1}, \frac{us^{-1} - 1}{us - 1}, \frac{u - s}{us - 1}, \frac{u(s^2-1)}{us - 1}, \frac{1 - s^{-2}}{us - 1} \right).
\end{equation}
The change of sign above compared to (\ref{eq:weightssix}) is made so that the above weights are positive (recall $u > s > 1$ in our case). \\

We next explain the continuous Gibbs property. We start by introducing some terminology from \cite{Def10} and \cite{Gor14}. Let $\mathcal{C}_n$ be the {\em Weyl chamber} in $\mathbb{R}^n$ i.e.
$$\mathcal{C}_n := \{ (x_1,...,x_n) \in \mathbb{R}^n: x_1 \leq  x_2 \leq  \cdots \leq  x_n \}.$$
For $x \in \mathbb{R}^n$ and $y \in \mathbb{R}^{n-1}$ we write $x \succeq y$ to mean that 
$$x_1 \leq y_1 \leq x_2 \leq y_2 \leq \cdots \leq x_{n-1} \leq y_{n-1} \leq x_n.$$
For $x = (x_1,...,x_n) \in \mathcal{C}_n$ we define the {\em Gelfand-Tsetlin polytope} to be
$$GT_n(x): = \{ (x^{1},...,x^{n}): x^{n} = x, x^{k} \in \mathbb{R}^k, x^{k} \succeq x^{k-1}, 2 \leq k \leq n\}.$$
We define the {\em Gelfand-Tsetlin cone} $GT^n$ to be
$$GT^n = \{ y \in \mathbb{R}^{n(n+1)/2}: y_{i }^{j+1} \leq y_{i}^j \leq y_{i+1}^{j+1}, \hspace{2mm} 1 \leq i \leq j \leq n-1\}.$$
We make the following definition after \cite{Gor14}.
\begin{definition}\label{definitionGT}
A probability measure $\mu$ on $GT^n$ is said to satisfy the {\em continuous Gibbs property} if conditioned on $y^n$ the distribution of $(y^1,...,y^{n-1})$ under $\mu$ is uniform on $GT_n(y^n)$. 
\end{definition}
\begin{remark} We refer the reader to \cite[Section 5]{dimitrov2016six} for a detailed discussion of the definition of the uniform measure on $GT_n(y)$, but in words the latter is a compact affine surface of finite dimension, which carries a natural uniform measure that is proportional to the Lebesgue measure on the affine space spanned by this surface.
\end{remark}

With the above notation we are finally ready to give the proof of Theorem \ref{thm:main}.
\begin{proof}(Theorem \ref{thm:main}) By Proposition \ref{PropHerm} we know that $Y^k(N,M) = \left(Y_1^k(N,M;k), \dots, Y_k^k(N,M;k) \right)$ converge weakly to $\mu_{\textrm{GUE}}^k$ as $M \rightarrow \infty$. Observe that by the interlacing conditions $\lambda^i(\pi) \preceq \lambda^{i+1}(\pi)$, for all $1 \leq i \leq k-1$ we have that 
$$ Y_1^k(N,M;k)\leq Y_i^j(N,M;k) \leq Y_k^k(N,M;k)\mbox{ for all $1\leq i \leq j \leq k$}. $$
Since $Y_1^k(N,M;k)$ and $Y_k^k(N,M;k)$ weakly converge we conclude from the last inequality that the random vectors $Y(N,M;k)$ are tight. 

Let $Y(\infty) =(Y_i^j(\infty): 1 \leq i \leq j \leq k)$ denote any subsequential limit of $Y(N(M),M;k)$, and let $Y(N(M_n),M_n;k)$ be a subsequence converging weakly to $Y(\infty) $. In view of Proposition \ref{PropHerm} we know that the joint distribution of $(Y_1^k(\infty), \dots, Y_k^k(\infty))$ is $\mu_{GUE}^k$. Furthermore, from our discussion earlier in the section, we know that the distribution of $\mu^j_i(\pi) = \lambda^j_{j-i+1}(\pi)$ for $1 \leq i \leq j \leq k$, where $\pi$ has distribution $\mathbb{P}_{u,v}^{N(M_n), M_n}$ satisfies the six-vertex Gibbs property with weights $w_1, \dots, w_6$ as in (\ref{S6eqWT}). We may now apply \cite[Proposition 6.7]{dimitrov2016six} and conclude that $Y(\infty)$ satisfies the continuous Gibbs property. We remark that in \cite[Proposition 6.7]{dimitrov2016six} the roles of $n$ and $k$ are swapped compared to our present notation and one should take $b(n) = d\sqrt{M_n}$ and $a(n) = a M_n$ in that proposition. 

Since $Y(\infty)$ satisfies the continuous Gibbs property and its top row $(Y_1^k(\infty), \dots, Y_k^k(\infty))$ has law $\mu_{GUE}^k$, we conclude that $Y(\infty)$ is the GUE-corners process of rank $k$. Since the sequence $Y(N,M;k)$ is tight and all weak subsequential limits are given by the GUE-corners process we conclude that $Y(N,M;k)$ converges weakly to the GUE-corners process  of rank $k$ as desired.
\end{proof}

%-------------------------------------------------------------------------------------------------------------------------------------------------------------------------------------------------
%    Section 5
%
%-------------------------------------------------------------------------------------------------------------------------------------------------------------------------------------------------
\section{Asymptotic analysis}\label{Section5} In this section we prove Lemma \ref{BMAs}. We accomplish this in Section \ref{Section5.2} after we introduce some useful notation for the proof in Section \ref{Section5.1}.

%-------------------------------------------------------------------------------------------------------------------------------------------------------------------------------------------------
%    Section 5.1
%
%-------------------------------------------------------------------------------------------------------------------------------------------------------------------------------------------------
\subsection{Setup}\label{Section5.1} Recall from Definition \ref{DefConstants} that our parameters $q,u,v,s$ satisfy 
\begin{equation} q \in (0,1), \qquad q=s^{-2}, \qquad 1<s<u<v^{-1}, \label{eq:parameterassumptions}
\end{equation}
 which we assume in what follows. If we assume the same notation as in Lemma \ref{BMAs} then in view of (\ref{DefAB}) and Lemma \ref{CIFf} we have for $M \geq M_0$ that
 \begin{equation}\label{BMNew}
\begin{split}
&d^{k} M^{k/2} B_M(\lambda(M)) =  d^{k}  M^{\binom{k + 1}{2} \cdot (1/2)} \cdot  \left( \frac{1-q}{1-su} \right)^{\binom{k+1}{2}} \left( \frac{(1-q^{-1})u}{1-su}\right)^{\binom{k}{2}}  \cdot \left( \frac{u-s}{1-s u} \right)^{- \binom{k}{2}} \cdot \\
&  \oint_{\gamma} \cdots \oint_{\gamma}\prod_{1 \leq \alpha < \beta \leq k}  \frac{u_{\alpha}-u_{\beta}}{u_{\alpha}-q u_{\beta}} \cdot  \prod_{i=1}^k\frac{s(1-su)}{(1-s u_i)(1-s^{-1}u)} \left(\f{1-s u_i}{u_i-s} \cdot \frac{u-s}{1-su} \right)^{\lambda_i(M)} \times\\
& \prod_{i = 1}^k \prod_{j = 1}^M \left( \frac{1-q u_i v_j}{1-u_i v_j} \cdot \frac{1- u v_j}{1-q u v_j} \right) \prod_{i = 1}^k \frac{du_i}{2\pi \iota}.
\end{split}
\end{equation}
Above, we can take $\gamma$ to be a zero-centered positively oriented circle of radius $u$ and we recall that $\iota = \sqrt{-1}$.

Recall from Definition \ref{DefConstants} the constants
\begin{equation}\label{constants}
\begin{split}
&a=\frac{ v \left(u-s^{-1}\right) \left(s^{-1} u-1\right)}{(1-u v) (1- s^{-2} u v)}, \qquad b=\frac{ (s^2-1)}{(u-s)(1-su)} \\
& c=\frac{1}{2} \left(a \left(\frac{1}{(u-s)^2}-\frac{s^2}{(1-s u)^2}\right)-\frac{s^{-4} v^2}{(1-s^{-2} u v)^2}+\frac{v^2}{(1-u v)^2} \right), \qquad d=\frac{- \sqrt{2 c}}{b}.
\end{split}
\end{equation}

We establish the following statement about the constants in (\ref{constants}).
\begin{lemma}\label{LemmaPar} For $u,v,s,q$ satisfying the conditions from \eqref{eq:parameterassumptions}, we have the following inequalities
$$a>0, \qquad b<0, \qquad c>0, \qquad d>0.$$
\end{lemma}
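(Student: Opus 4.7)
The strategy is to verify each of the four sign claims by elementary bookkeeping on the factors in \eqref{constants}, using only the hypotheses $1<s<u<v^{-1}$ and $q = s^{-2}\in(0,1)$. The signs of $a$ and $b$ are immediate: each of the five factors in the expression for $a$ is positive (note $u-s^{-1}>u-s>0$, $s^{-1}u-1 = (u-s)/s > 0$, and $0<uv<1$ forces $1-uv>0$ and $1-s^{-2}uv \geq 1-uv > 0$), while in $b$ the numerator $s^2-1$ and the factor $u-s$ are positive but $1-su<0$, so $b<0$. Finally $d = -\sqrt{2c}/b$ is positive as soon as $c>0$ and $b<0$.

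The only substantive step is to prove $c>0$. I would split $2c$ into its $a$-dependent piece and its $v^2$-piece:
\begin{equation*}
2c = a\left(\frac{1}{(u-s)^2}-\frac{s^2}{(1-su)^2}\right)+\left(\frac{v^2}{(1-uv)^2}-\frac{s^{-4}v^2}{(1-s^{-2}uv)^2}\right),
\end{equation*}
and show that each parenthesized expression is positive; combined with $a>0$ this yields $c>0$.

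For the first parenthesis, since $1-su<0$ and $u-s>0$, we have $|1-su|-s|u-s| = (su-1) - s(u-s) = s^2-1 > 0$, so $(1-su)^2 > s^2 (u-s)^2$, which gives $\frac{1}{(u-s)^2}>\frac{s^2}{(1-su)^2}$. For the second, both $1-uv$ and $1-s^{-2}uv$ are positive, so I may compare $(1-s^{-2}uv) - s^{-2}(1-uv) = 1 - s^{-2} > 0$, which gives $(1-s^{-2}uv)^2 > s^{-4}(1-uv)^2$ and hence $\frac{v^2}{(1-uv)^2} > \frac{s^{-4}v^2}{(1-s^{-2}uv)^2}$. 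Together with $a>0$ this establishes $c>0$, and the chain $b<0$, $c>0$ then yields $d>0$.

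No step looks like a real obstacle; the only mild subtlety is keeping track of the sign of $1-su$ (negative) versus $1-uv$ and $1-s^{-2}uv$ (both positive), which is the distinction that makes the two sign comparisons in $c$ work out by slightly different algebra. Since the whole lemma is a finite sequence of such inequalities, the plan is simply to present the factorizations above in order $a$, $b$, $c$, $d$.
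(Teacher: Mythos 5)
Your verification of $a>0$, $b<0$, and $d>0$ matches the paper's reasoning exactly, and your argument for $c>0$ is correct but takes a genuinely different and, I'd say, cleaner route. The paper proves $c>0$ by first performing a nontrivial algebraic simplification, rewriting
\begin{equation*}
c=\frac{v(1-q)(1-s^{-1} v)\,T}{2 (s^{-1}-u)(s^{-1} u-1)(1-u v)^2(1-q u v)^2},
\qquad
T=1+s^{-2}-2s^{-2}uv+s^{-3}u^2v+s^{-1}u^2v-2s^{-1}u,
\end{equation*}
and then substituting $v=yu^{-1}$, $u=rs$ to reduce the claim to showing that the linear-in-$r$ polynomial $T(r,y)=r(qy+y-2)+(1+q-2qy)$ is negative, which is verified by noting the leading coefficient is negative and evaluating at $r=1$. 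Your approach avoids this entirely: you decompose $2c$ into the $a$-weighted term and the $v^2$ term, and show each is positive by a two-line comparison of the form ``numerator$^2>$ (scale)$\cdot$denominator$^2$,'' which in both cases reduces to $s^2-1>0$ or $1-s^{-2}>0$. What the paper's route buys is a completely explicit rational factorization of $c$ (potentially useful elsewhere); what yours buys is a shorter, more elementary, and less error-prone argument that requires no simplification step and makes the positivity mechanism transparent. Both are valid; yours is a legitimate improvement in economy for the purpose of this lemma alone.
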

\begin{proof}
Since every factor in $a = \frac{ v \left(u-s^{-1}\right) \left(s^{-1} u-1\right)}{(1-u v) (1-q u v)}$ is positive we conclude that $a> 0$. Examining the factors of $b=\frac{ (s^2-1)}{(u-s)(1-su)}$ shows that $(1-su)$ is negative and the other factors are positive so $b < 0$. Once we show that $c$ is positive we will conclude that $d=-\frac{\sqrt{2 c}}{b}$ is also positive. Showing $c$ is positive requires a short argument that we present below.

Simplifying $c$ gives 
$$c=\frac{v(1-q)(1-s^{-1} v) T}{2 (s^{-1}-u) (s^{-1} u-1) (1-u v)^2 (1-q u v)^2},$$
where 
$$T=1+s^{-2}-2s^{-2} u v +s^{-3} u^2 v +s^{-1} u^2v-2 s^{-1} u.$$
From the above factorization formula for $c$, we see that to show that $c > 0$ it suffices to prove that $T< 0$. Let us put $v=y u^{-1}$ and $u=r s$ so that \eqref{eq:parameterassumptions} becomes the condition $r > 1$ and $0 <y<1$. In these variables we have
$$T(r,y)=1+q-2 q y +q y r+r y-2r = r( qy + y - 2) + (1 + q - 2qy).$$
The latter is a linear function in $r$ with a leading negative coefficient. Thus its maximum on $[1, \infty)$ is attained when $r = 1$ and then $T(1,y) = -(1-y)(1-q) < 0$. We conclude that $T(r,y) < 0$ for all $r > 1$ and $y \in (0,1)$, which proves that $c > 0$ as desired.
\end{proof}

\begin{definition}\label{DefAs} If $z \in \mathbb{C} \setminus \{0\}$ we define $\log(z) = \log|z| + \iota \phi$ where $z = |z| e^{\iota \phi}$ with $\phi \in (-\pi, \pi]$ (i.e. we take the principal branch of the logarithm). For $u_1, \dots ,u_k \in \mathbb{C}$ such that $u_i \neq q u_j$  and $u_i \neq s$ we define 
\begin{equation}\label{DefP}
p(u_1, \dots, u_k)= p(\vec{u}) = \prod_{1 \leq \alpha < \beta \leq k}  \frac{u_{\alpha}-u_{\beta}}{u_{\alpha}-q u_{\beta}} \cdot  \prod_{i=1}^k\frac{s(1-su)}{(1-s u_i)(1-s^{-1}u)}.
\end{equation}
We also define the functions 
\begin{equation}\label{DefGAs}
G(z) = a \cdot \log\left(\frac{1-sz}{z-s}\right) + \log \left(\frac{1-q z v}{1-z v}\right) -  a \cdot \log\left(\frac{1-su}{u-s}\right) + \log \left(\frac{1-q u v}{1-u v}\right),
\end{equation}
\begin{equation}\label{DefgAs}
g(z) =\log\left( \frac{1-s z}{z -s}\right) -\log \left(\frac{1-s u}{u-s}\right),
\end{equation}
and for $ x \in \mathbb{R}$ we let $h_M(x)$ be the unique element of $(-1,0]$ so that $aM+d x \sqrt{M}+h_M(x)$ is an integer. In the latter equations $q,u,v,s$ are as in (\ref{eq:parameterassumptions}) and $a,d$ are as in (\ref{constants}). Finally, we define 
\begin{equation}\label{FrontConstant}
A_k = d^{k} \cdot  \left( \frac{1-q}{1-su} \right)^{\binom{k+1}{2}} \left( \frac{(1-q^{-1})u}{1-su}\right)^{\binom{k}{2}}  \cdot \left( \frac{u-s}{1-s u} \right)^{- \binom{k}{2}}.
\end{equation}
\end{definition}

\begin{definition}\label{Contours} We let $\mathcal{C}$ denote the positively oriented contour that goes from $u - 2\iota u$ straight up to $u+ 2\iota u$ and then follows the half-cirlce of radius $2u$ centered at $u$, see Figure \ref{S5_1}. For $\epsilon \in (0,1)$ we also denote by $\mathcal{C}^{\epsilon}$ the contour that goes from $u- \iota \epsilon$ straight up to $u + \iota \epsilon$. 
\end{definition}
\begin{figure}[h]
\includegraphics[scale=0.8]{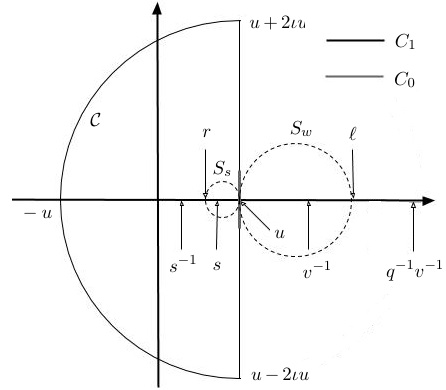}
 \captionsetup{width=0.9\linewidth}
\caption{The figure represents the contour $\mathcal{C}$ from Definition \ref{Contours}, in addition to $S_s, S_w$ as in the proof of Lemma \ref{techies} and the contours $C_0, C_1$ as in the proof of Lemma \ref{BMAs} in Section \ref{Section5.2} } 
\label{S5_1}
\end{figure}

 We may deform the $\gamma$ contours in (\ref{BMNew}) to the contour $\mathcal{C}$ from Definition \ref{Contours} without crossing any poles of the integrals, which by Cauchy's theorem does not change the value of the integral. After doing this contour deformation and utilizing the notation from Definition \ref{DefAs} we see that if $M \geq M_0$ we have
 \begin{equation}\label{BMNew2}
\begin{split}
&d^{-k} M^{k/2} B_M(\lambda(M)) = A_k \cdot   M^{\binom{k + 1}{2} \cdot (1/2)} \cdot \oint_{\mathcal{C}} \cdots \oint_{\mathcal{C}}p(\vec{u}) \cdot \\
&\exp\left( \sum_{i = 1}^k M G(u_i) + \sqrt{M} d x_i g(u_i) + h_M(x_i) g(u_i) \right) \prod_{i = 1}^k \frac{du_i}{2\pi \iota}.
\end{split}
\end{equation}

Our asymptotic analysis in the next section depends on a careful study of the functions $G$ and $g$ along the contour $\mathcal{C}$. We establish several useful properties in the following lemma.
\begin{lemma}\label{techies} Suppose that $G,g$ are as in Definition \ref{DefAs} and $\mathcal{C}, \mathcal{C}^{\epsilon}$ is as in Definition \ref{Contours}. We have 
\begin{equation}\label{M1}
G(u) = g(u) = G'(u) = 0, \hspace{5mm}  G''(u) =  2c , \hspace{5mm} g'(u) = b.
\end{equation}
For any $z \in \mathcal{C}$ we have that
\begin{equation}\label{M2}
Re [G(z)] \leq 0.
\end{equation} 
Moreover, for any $\epsilon \in (0,1)$ there exists $\delta > 0$ such that if $z \in \mathcal{C} \setminus \mathcal{C}^{\epsilon}$ 
\begin{equation}\label{M3}
Re [G(z)] \leq -\delta.
\end{equation} 
There exists $\epsilon_1 \in (0,1)$ and $C_1 > 0$ such that if $z \in \mathcal{C}^{\epsilon_1}$ we have that 
\begin{equation}\label{M4}
\left| G(z) -  c (z-u)^2 \right| \leq  C_1 |z-u|^3, \hspace{5mm}  2C_1 \epsilon_1 < c, \hspace{5mm} \left|g(z) -b(z-u)  \right| \leq C_1 |z-u|^2.
\end{equation} 
\end{lemma}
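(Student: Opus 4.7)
The plan is to verify the four assertions in the order they appear, since each later part reuses machinery from the previous ones. For \eqref{M1}, I would proceed by direct computation: $G(u)=0$ and $g(u)=0$ are immediate from the definitions. Writing $f_1(z)=\log\tfrac{1-sz}{z-s}$, one finds $f_1'(u) = \tfrac{s^2-1}{(u-s)(1-su)} = b$, which gives $g'(u)=b$. For $G'(u)$, the contribution from the $v$-factor is $\tfrac{v}{1-uv} - \tfrac{qv}{1-quv} = -\tfrac{(1-q)v}{(1-uv)(1-quv)}$, and using the identity $(u-s^{-1})(s^{-1}u-1) = -s^{-2}(1-su)(u-s)$ together with $q=s^{-2}$ one checks that $ab$ cancels this exactly. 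The computation of $G''(u)$ is analogous, and direct comparison with \eqref{constants} yields $G''(u)=2c$.

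Assertion \eqref{M4} then follows from Taylor's theorem. Both $G$ and $g$ are holomorphic in a neighborhood of $u$ (their only singularities lie at $z=s,s^{-1},v^{-1}$, none equal to $u$), so the cubic and quadratic remainder bounds hold on some closed disc about $u$ with $C_1$ chosen from the suprema of $|G'''|$ and $|g''|$ on that disc; the auxiliary condition $2C_1\epsilon_1<c$ is then secured by shrinking $\epsilon_1$ further.

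The main task is \eqref{M2}. Since $u,v,s$ are real, the constants subtracted in $G$ are real and $\operatorname{Re} G(z)$ reduces to $a\log\tfrac{|1-sz|}{|z-s|}+\log\tfrac{|1-qzv|}{|1-zv|}$ minus those constants. I would split $\mathcal{C}$ into the vertical segment $S_s=\{u+it:|t|\leq 2u\}$ and the half-circle $S_w$. On $S_s$, each of the squared-modulus ratios $\tfrac{|1-sz|^2}{|z-s|^2}$ and $\tfrac{|1-qzv|^2}{|1-zv|^2}$ depends on $t$ only through $t^2$, and direct differentiation shows the numerators of their derivatives in $t^2$ simplify to $(1-s^2)(2su-s^2-1)$ and $v^2(q-1)(1+q-2quv)$ respectively. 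Both are negative under the standing assumptions $1<s<u<v^{-1}$, so each log term is strictly decreasing in $|t|$ and attains its maximum at $t=0$, yielding $\operatorname{Re} G(z)\leq 0$ on $S_s$ with equality only at $u$. On $S_w$, parameterizing $z=u+2ue^{i\theta}$, I would carry out the analogous monotonicity analysis in $\cos\theta$; a useful consistency check is that as one moves along $S_w$ the ratios are bounded above by $s$ and $q$ respectively, whereas $\tfrac{su-1}{u-s}>s$ (equivalent to $u>1$) and $\tfrac{1-quv}{1-uv}>q$ (equivalent to $uv<1$), so the subtracted constants dominate away from $u$.

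Finally, \eqref{M3} is a compactness argument: $\mathcal{C}\setminus\mathcal{C}^\epsilon$ is compact and disjoint from $u$, and since $u$ is the unique zero of $\operatorname{Re} G$ on $\mathcal{C}$ by the strict version of \eqref{M2}, continuity forces $\operatorname{Re} G$ to attain a strictly negative maximum $-\delta$ on this set. The main obstacle will be the strict inequality on $S_w$ in \eqref{M2}: the dependence on the angular parameter is more intricate than the clean $t^2$ analysis on $S_s$, and tracking signs across the several multiplicative factors requires care. Once that step is settled, the remaining pieces assemble routinely.
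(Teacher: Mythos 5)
Your treatment of \eqref{M1}, \eqref{M3}, and \eqref{M4} matches the paper: direct computation of the derivatives of $G$ and $g$ at $u$, Taylor's theorem with remainder for \eqref{M4}, and continuity/compactness for \eqref{M3}. (Minor note: the two auxiliary equivalences you cite are slightly off — $\frac{su-1}{u-s}>s$ is equivalent to $s>1$, not $u>1$, and $\frac{1-quv}{1-uv}>q$ is equivalent to $q<1$, not $uv<1$.) Your \eqref{M2} argument, however, takes a genuinely different route from the paper. The paper does not compute on the two arcs of $\mathcal{C}$ separately; instead it introduces two \emph{Apollonius circles} — the level curves $\bigl|\frac{z-s^{-1}}{z-s}\bigr|=\frac{u-s^{-1}}{u-s}$ and $\bigl|\frac{z-q^{-1}v^{-1}}{z-v^{-1}}\bigr|=\frac{q^{-1}v^{-1}-u}{v^{-1}-u}$ — which bound the corresponding moduli ratios from inside/outside, and then observes that $\mathcal{C}\setminus\{u\}$ lies strictly outside both circles. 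That geometric characterization treats the vertical segment and the half-circle uniformly, with no angular parametrization. Your strategy — explicit dependence on $t^2$ on the vertical segment, then a $\cos\theta$ analysis on the arc — is also viable and more elementary, but it splits the contour and requires two separate computations. (You also reuse the paper's symbols $S_s$, $S_w$ to mean the two arcs of $\mathcal{C}$, whereas in the paper they denote the Apollonius circles; this is a notational collision worth avoiding.)

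The genuine gap is the half-circle case of \eqref{M2}, which you explicitly defer (``the dependence on the angular parameter is more intricate\ldots''). Moreover, the ``consistency check'' you offer there is false: the bound $\bigl|\frac{1-sz}{z-s}\bigr|\le s$ fails on the half-circle. The level set $\bigl|\frac{1-sz}{z-s}\bigr|=s$ is the vertical line $\operatorname{Re}(z)=\frac{s+s^{-1}}{2}$, and for $\operatorname{Re}(z)>\frac{s+s^{-1}}{2}$ the ratio exceeds $s$; but the endpoints $u\pm 2\iota u$ of the half-circle have $\operatorname{Re}(z)=u>s>\frac{s+s^{-1}}{2}$, so the ratio is strictly greater than $s$ there. (Concretely, at $z=u+2\iota u$ one has $\frac{(1-su)^2+4s^2u^2}{(u-s)^2+4u^2}>s^2$ iff $s^2>1$.) What is actually needed — and what your outline does not establish — is the weaker bound $\bigl|\frac{1-sz}{z-s}\bigr|<\frac{su-1}{u-s}$ on the half-circle, and similarly for the $v$-factor. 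That inequality does hold: parametrizing $z=u+2ue^{\iota\theta}$ and setting $c=\cos\theta\in[-1,0]$, one can check that the squared ratio $N(c)/D(c)$ has $N'D-ND'$ independent of $c$ and strictly positive, so $N/D$ is increasing in $c$ and attains its maximum at the endpoints $c=0$, where it joins the segment case you already control. Alternatively, the Apollonius-circle picture gives the conclusion at once since both circles are tangent to $\mathcal{C}$ at $u$ and lie entirely on the concave side. In either form, this half-circle step needs to be actually carried out; as written the proposal leaves it open, and the proposed sanity check points in the wrong direction.
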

\begin{proof}
The fact that $G(u) = g(u) = 0$ is immediate from the definition. Next we have by a direct computation that 
$$G'(z) = a \cdot  \frac{q^{-1} -1}{(1 - sz)(z-s)} - \frac{v (1-q^{-1})}{(q^{-1} - vz) (1- vz)},$$
from which one checks directly (using the definition of $a$) that $G'(u) = 0$. Similar direct computations show that $G''(u) = 2c$ and $g'(u) = b$. 

By definition, we have that 
$$Re[G(z)] = a \log \left| \frac{z - s^{-1}}{z - s} \right| + \log \left| \frac{z - q^{-1} v^{-1}}{z - v^{-1}} \right| -  a \log \left| \frac{u - s^{-1}}{u - s} \right| - \log \left| \frac{u - q^{-1} v^{-1}}{u- v^{-1}} \right|.$$
Let $\ell$ denote the unique point in the segment $[s^{-1}, s]$ such that $\frac{\ell - s^{-1}}{s - \ell } = \frac{u - s^{-1}}{u - s}$, and $r$ be the unique point in the segment $[v^{-1} , q^{-v}v^{-1}]$ such that $\frac{q^{-1}v^{-1} - r}{r - v^{-1}} = \frac{q^{-1} v^{-1} - u}{v^{-1} - u}.$ We also denote by $S_s$ the circle, whose diameter is given by the segment $[\ell, u]$ and by $S_w$ the circle whose diameter is given by the segment $[u, r]$, see Figure \ref{S5_1}. 

The circles $S_s$ and $S_w$ are sometimes called Apollonius circles and they satisfy the properties 
$$\left| \frac{z - s^{-1}}{z- s} \right| \leq \frac{u - s^{-1}}{u - s} \mbox{ if $z$ lies outside of $S_s$ and }\left| \frac{z - s^{-1}}{z- s} \right| \geq \frac{u - s^{-1}}{u - s} \mbox{ if $z$ lies inside $S_s$};$$
 $$\left| \frac{z - q^{-1} v^{-1}}{z - v^{-1}} \right| \leq \frac{ q^{-1} v^{-1} - u}{v^{-1} - u}\mbox{ if $z$ lies outside of $S_w$ and } \left| \frac{z - q^{-1} v^{-1}}{z - v^{-1}} \right| \geq  \frac{q^{-1} v^{-1} - u}{v^{-1} - u},  $$
 if $z$ lies inside $S_w$. Since $\mathcal{C}$ lies outside of $S_w \cup S_s$ except for the point $u$ and $a > 0$  we conclude that for all $z \in \mathcal{C}$ we have $Re[G(z)] \leq Re[G(u)] = 0,$
while for any $z \in \mathcal{C}\setminus \{u\}$ we have $Re[G(u)] < Re[G(u)] = 0$. This proves (\ref{M2}) and by continuity of $G$ on $\mathcal{C}$ we also see that for any $\epsilon > 0$ there is a $\delta > 0$ such that (\ref{M3}) holds.
 
Finally, from our work above we know that in a neighborhood of $u$ we have
$$G(z) = c (z-u)^2 + O(|z-u|^3) \mbox{ and } g(z)= b (z- u) + O(|z-u|^2).$$
We can thus find $\epsilon_0 \in (0,1)$ and $C_1 > 0$ such that if $|z-u| \leq \epsilon_0$ we have
$$\left| G(z) -  c (z-u)^2 \right| \leq  C_1 |z-u|^3, \hspace{5mm} \left|g(z) - b(z-u) \right| \leq C_1 |z-u|^2.$$
Finally, since $c > 0$ we can pick $\epsilon_1 < \epsilon_0$ sufficiently small so that $2C_1 \epsilon_1 < c$ and then all the inequalities in (\ref{M4}) hold. This suffices for the proof.
\end{proof}

%-------------------------------------------------------------------------------------------------------------------------------------------------------------------------------------------------
%    Section 5.2
%
%-------------------------------------------------------------------------------------------------------------------------------------------------------------------------------------------------
\subsection{The steepest descent argument}\label{Section5.2} In this section we prove Lemma \ref{BMAs}.

\begin{proof} (Lemma \ref{BMAs}) We follow the same notation as in Lemma \ref{BMAs} and Section \ref{Section5.1} above. For clarity we split the proof into four steps.\\

{\bf \raggedleft Step 1.} Let $\epsilon_1  \in (0,1)$ be as in the statement of Lemma \ref{techies}. We also let $\delta_1 > 0$ be as in Lemma \ref{techies} for $\epsilon = \epsilon_1$. We denote by $C_0$ the contour $\mathcal{C}^{\epsilon_1}$ and by $C_1$ the contour $\mathcal{C} \setminus \mathcal{C}^{\epsilon_1}$, see Figure \ref{S5_1}. We have that $\mathcal{C} = C_0 \cup C_1$ and $C_0$ is a small piece near $u$ while $C_1$ is the part of $\mathcal{C}$ away from $u$. In view of (\ref{BMNew2}) we have that if $M \geq M_0$ we have
 \begin{equation}\label{V1}
\begin{split}
&d^{-k} M^{k/2} B_M(\lambda(M)) = A_k \cdot   M^{\binom{k + 1}{2} \cdot (1/2)} \cdot  \sum_{ \sigma_1, \dots, \sigma_k \in \{0, 1\}}B(\sigma_1, \dots, \sigma_k), \mbox{ where }  \\
&B(\sigma_1, \dots, \sigma_k) = \oint_{C_{\sigma_1}} \hspace{-2mm} \cdots \oint_{C_{\sigma_k}} \hspace{-2mm} p(\vec{u})  \exp\left( \sum_{i = 1}^k M G(u_i) + \sqrt{M} d x_i g(u_i) + h_M(x_i) g(u_i) \right) \prod_{i = 1}^k \frac{du_i}{2\pi \iota}.
\end{split}
\end{equation}
In this step we prove that if $\sigma_1, \dots, \sigma_k \in \{0, 1 \}$ are such that $|\sigma| = \sigma_1 + \cdots + \sigma_k \geq 1$ we have that 
\begin{equation}\label{V2}
B(\sigma_1, \dots, \sigma_k) = O\left(e^{-(\delta_1/2)M} \right),
\end{equation}
where the constant in the big $O$ notation depends on $k, a, A, u ,v,q$.\\

Let $K_1, K_2 > 0$ be such that if $u_1, \dots, u_k, z \in \mathcal{C}$ we have
$$|g(z)| \leq K_1 \mbox{ and } |p(u_1, \dots, u_k)| \leq K_2.$$
Then in view of the definition of $\delta_1$, and equations (\ref{M2}), (\ref{M3}) we have that if $u_i \in C_{\sigma_i}$ for $i = 1, \dots, k$ we have
$$\left| p(\vec{u})  \exp\left( \sum_{i = 1}^k M G(u_i) + \sqrt{M} d x_i g(u_i) + h_M(x_i) g(u_i) \right)  \right| \leq  K_2 \exp \left( -M|\sigma|\delta_1 + \sqrt{M}k K_1[Ad + 1] \right) .$$
In deriving the above equation we used that $|e^{z}| \leq e^{|z|}$ for any complex $z$. The above equation now clearly implies (\ref{V2}). \\

{\bf \raggedleft Step 2.} In view of (\ref{V1}) and (\ref{V2}) we see that to prove the lemma it suffices to show that 
\begin{equation}\label{V3}
\lim_{M \rightarrow \infty} A_k \cdot   M^{\binom{k + 1}{2} \cdot (1/2)} B(0, \dots, 0) = d^{-\binom{k}{2}} \cdot (\sqrt{2\pi})^{-k}\prod_{1 \leq i < j \leq k} (x_j - x_i) \cdot \prod_{i = 1}^k e^{-x_i^2/2},
\end{equation}
and that there is a constant $C > 0$ depending on $k, a, A, u ,v,q$ such that 
\begin{equation}\label{V4}
| A_k \cdot   M^{\binom{k + 1}{2} \cdot (1/2)} B(0, \dots, 0) |  \leq C.
\end{equation}
In this step we prove (\ref{V4}). The proof of (\ref{V3}) is given in the next steps. \\

We perform a change of variables $u_i = u+ \iota \cdot M^{-1/2} \cdot y_i$ for $i = 1, \dots, k$. This gives the formula
\begin{equation}\label{V5}
\begin{split}
& A_k \cdot   M^{\binom{k + 1}{2} \cdot (1/2)} B(0, \dots, 0) =A_k \int_{\mathbb{R}^k}  \hat{p}_M(\vec{y})  \exp\left( \sum_{i =1}^k H_M(y_i)\right) \prod_{i = 1}^k {\bf 1} \{  |y_i| \leq \epsilon_1 M^{1/2} \} \frac{dy_i}{2\pi },
\end{split}
\end{equation}
where 
\begin{equation}\label{V6}
\begin{split}
&H_M(y) =  G(u + \iota \cdot M^{-1/2} y ) + \sqrt{M} d x_i g(u +\iota \cdot M^{-1/2} y ) + h_M(x_i) g(u + \iota \cdot M^{-1/2} y) \mbox{, and } \\
&  \hat{p}_M(\vec{y}) = \hspace{-3mm} \prod_{1 \leq \alpha < \beta \leq k}  \frac{\iota y_{\alpha}- \iota y_{\beta}}{(1 - q)u + \iota y_{\alpha}M^{-1/2} -q \iota y_{\beta}M^{-1/2}}  \prod_{i=1}^k\frac{s(1-su)}{(1-s u - s \iota M^{-1/2} y_i )(1-s^{-1}u)}.
\end{split}
\end{equation}
We see from (\ref{V6}) and (\ref{M4}) that there are constants $c_1, c_2 > 0$ that depend on $k, a, A, u ,v,q$ such that for all $M \in \mathbb{N}$ and $y_1, \dots, y_k \in \mathbb{R}$ we have
$$\left| \hat{p}_M(\vec{y}) \exp\left( \sum_{i =1}^k H_M(y_i)\right) \prod_{i = 1}^k {\bf 1} \{  |y_i| \leq \epsilon_1 M^{1/2} \}  \right| \leq h(\vec{y}),$$
where 
$$h(\vec{y}) = c_1 \cdot \prod_{1 \leq \alpha < \beta \leq k} | y_{\alpha}-  y_{\beta}| \cdot \exp\left( -(c/2)\sum_{i = 1}^k y_i^2 + c_2 \cdot \sum_{i = 1}^k |y_i| \right).$$
Combining the last inequality and (\ref{V5}) we conclude that for all $M \geq M_0$ we have 
$$ \left|A_k \cdot   M^{\binom{k + 1}{2} \cdot (1/2)} B(0, \dots, 0) \right| \leq A_k \int_{\mathbb{R}^k} h(\vec{y})  \prod_{i = 1}^k  \frac{dy_i}{2\pi },$$
which implies (\ref{V4}). \\

{\bf \raggedleft Step 3.} In this step we prove (\ref{V3}). From our work in the previous step we know that $h(\vec{y})$ is a dominating function for the functions
$$ \hat{p}_M(\vec{y}) \exp\left( \sum_{i =1}^k H_M(y_i)\right) \prod_{i = 1}^k {\bf 1} \{  |y_i| \leq \epsilon_1 M^{1/2} \},$$
which in view of (\ref{M4}) and (\ref{V6}) converge pointwise to
$$ \prod_{1 \leq \alpha < \beta \leq k}  \frac{\iota y_{\alpha}- \iota y_{\beta}}{(1 - q)u }  \prod_{i=1}^k\frac{s e^{-cy_i^2 + \iota d x_i y_i }}{1-s^{-1}u}.$$
Consequently, by the dominated convergence theorem, we conclude that 
\begin{equation}\label{V7}
\begin{split}
&\lim_{M \rightarrow \infty} A_k \cdot   M^{\binom{k + 1}{2} \cdot (1/2)} B(0, \dots, 0) = A_k \cdot \left((1-q)u \right)^{-\binom{k}{2}} \cdot \left( \frac{s}{1-s^{-1}u}\right)^k \times \\
& \int_{\mathbb{R}^k}  \prod_{1 \leq \alpha < \beta \leq k}  (\iota y_{\alpha}- \iota y_{\beta})  \prod_{i = 1}^k e^{-cy_i^2 + \iota d b x_i y_i }\frac{dy_i}{2\pi }.
\end{split}
\end{equation}

Substituting $A_k$ from (\ref{FrontConstant}) and performing the change of variables $z_i = \sqrt{2c} y_i$ (recall that $d = \frac{-\sqrt{2c}}{b}$) we obtain 
\begin{equation*}
\begin{split}
&\lim_{M \rightarrow \infty} A_k \cdot   M^{\binom{k + 1}{2} \cdot (1/2)} B(0, \dots, 0) =   d^{-\binom{k}{2}}   \int_{\mathbb{R}^k}  \prod_{1 \leq \alpha < \beta \leq k}  (\iota z_{\alpha}- \iota z_{\beta})  \prod_{i = 1}^k e^{-z_i^2/2 - \iota  x_i z_i }\frac{dz_i}{2\pi }.
\end{split}
\end{equation*}
We next use the formula for the Vandermonde determinant 
$$ \prod_{1 \leq \alpha < \beta \leq k}  (\iota z_{\alpha}- \iota z_{\beta})  = (\iota)^{\binom{k}{2}} \det \left[ z_i^{k- j}\right]_{i, j = 1}^k ,$$
and the linearity of the determinant to conclude that 
\begin{equation}\label{V8}
\begin{split}
&\lim_{M \rightarrow \infty} A_k \cdot   M^{\binom{k + 1}{2} \cdot (1/2)} B(0, \dots, 0) =   d^{-\binom{k}{2}} (\iota)^{\binom{k}{2}} \det \left[ \psi_{k-j}(x_i)\right]_{i, j = 1}^k, \mbox{ where } 
\end{split}
\end{equation}
$$\psi_{k-j}(x) = \int_{\mathbb{R}}   z^{k-j} e^{-z^2/2 - \iota  x z }\frac{dz}{2\pi }.$$

We claim that 
\begin{equation}\label{V9}
 (\iota)^{\binom{k}{2}} \det \left[ \psi_{k-j}(x_i)\right]_{i, j = 1}^k = (\sqrt{2\pi})^{-k}\prod_{1 \leq i < j \leq k} (x_j - x_i) \cdot \prod_{i = 1}^k e^{-x_i^2/2}.
\end{equation}
Notice that (\ref{V8}) and (\ref{V9}) together imply (\ref{V3}). We have thus reduced the proof of the lemma to establishing (\ref{V9}), which we do in the next and final step.\\

{\bf \raggedleft Step 4.} In this step we prove (\ref{V9}). Let $h_n(x)$ stands for the $n$-th Hermite polynomial, i.e.
\begin{equation}\label{V10}
h_n(x) = (-1)^n e^{\frac{x^2}{2}}\partial_x^n e^{-\frac{x^2}{2}},
\end{equation}
 see e.g. \cite[Section 3.2.1]{anderson2010introduction} for the definition and basic properties of these polynomials. Our first observation is that for $n \in \mathbb{Z}_{\geq 0}$ we have
\begin{equation}\label{V11}
  \psi_{n}(x) = (-\iota)^{n} (\sqrt{2 \pi })^{-1} e^{-\frac{x^2}{2}} h_n(x).
\end{equation}
We argue this by induction on $n$ with base case $n = 0$ being true in view of 
$$\psi_0(x) = \int_{\mathbb{R}}    e^{-z^2/2 - \iota  x z }\frac{dz}{2\pi } = (\sqrt{2\pi})^{-1} \cdot e^{-x^2/2},$$
where we used the formula for the characteristic function of a standard normal random variable. Suppose we know that (\ref{V11}) holds for $n$ and differentiate both sides with respect to $x$. For the right side we have using (\ref{V10}) that 
$$ \partial_x \left( (-\iota)^{n} (\sqrt{2 \pi })^{-1} e^{-\frac{x^2}{2}} h_n(x) \right) =  -(- \iota)^{-n} (\sqrt{2 \pi })^{-1} e^{-\frac{x^2}{2}} h_{n+1}(x) =  (- \iota)^{n + 2} (\sqrt{2 \pi })^{-1} e^{-\frac{x^2}{2}} h_{n+1}(x),$$
while for the left side we have
$$\partial_x \psi_n(x) =  \int_{\mathbb{R}}   z^{n} \partial_x e^{-z^2/2 - \iota  x z }\frac{dz}{2\pi } = (-\iota)  \int_{\mathbb{R}}   z^{n+1}  e^{-z^2/2 - \iota  x z }\frac{dz}{2\pi } = (-\iota) \psi_{n+1}(x),$$
where we can differentiate under the integral by the rapid decay of the integrand near infinity. The last two equations imply (\ref{V11}) for $n+1$ and so we conclude that (\ref{V11}) holds for all $n \in \mathbb{Z}_{\geq 0}$ by induction.

In view of (\ref{V11}) and the linearity of the determinant we see that to prove (\ref{V9}) it suffices to show that 
$$\det \left[ h_{k-j}(x_i)\right]_{i, j = 1}^k =\prod_{1 \leq i < j \leq k} (x_i - x_j) .$$
The latter is now clear since $h_n(x)$ is a monic polynomial of degree $n$, cf. \cite[(3.2.3)]{anderson2010introduction}, and so
$$\det \left[ h_{k-j}(x_i)\right]_{i, j = 1}^k = \det \left[ x_i^{k-j}\right]_{i, j = 1}^k = \prod_{1 \leq i < j \leq k} (x_i - x_j),$$
by the Vandermonde determinant formula. This suffices for the proof.
\end{proof}

\bibliographystyle{amsplain} 
\bibliography{PD}

\end{document}